\theoremstyle{plain}
\newtheorem{theorem}{Theorem}
\newtheorem{lemma}[theorem]{Lemma}
\newtheorem{prop}[theorem]{Proposition}
\newtheorem{cor}[theorem]{Corollary}
\theoremstyle{definition}
\newtheorem{definition}{Definition}
\newtheorem{example}{Example}
\theoremstyle{remark}
\newtheorem{remark}{Remark}
\newtheorem{claim}{Claim}
\renewcommand{\tilde}{\widetilde}
\renewcommand{\bar}{\overline}
\newcommand{\bbC}{\mathbb{C}}
\newcommand{\bbZ}{\mathbb{Z}}
\newcommand{\bbN}{\mathbb{N}}
\newcommand{\bbR}{\mathbb{R}}
\newcommand{\raw}{\rightarrow}
\newcommand{\cug}{\subseteq}
\newcommand{\into}{\longmapsto}
\newcommand{\frgl}{\mathfrak{gl}}
\newcommand{\frsl}{\mathfrak{sl}}
\newcommand{\frg}{\mathfrak{g}}
\newcommand{\hgot}{\mathfrak{h}}
\newcommand{\hgotd}{\mathfrak{h}^*_\bbR}
\newcommand{\hgotdc}{\mathfrak{h}^*}
\newcommand{\calX}{\mathcal{X}}
\newcommand{\bbLL}{\mathbb{LL}}
\newcommand{\undw}{{\underline{w}}}
\newcommand{\flipLL}{\raisebox{\depth}{\scalebox{1}[-1]{$LL$}}}
\newcommand{\Address}{
  \bigskip{\footnotesize

  \textsc{Max-Planck-Institut f\"ur Mathematik, Bonn, Germany}\par\nopagebreak
  \textit{E-mail address}: \texttt{leonardo@mpim-bonn.mpg.de}
}}
\DeclareMathOperator{\sym}{Sym}
\DeclareMathOperator{\Trace}{Tr}
\DeclareMathOperator{\Grad}{Gr}
\DeclareMathOperator{\End}{End}
\DeclareMathOperator{\Hom}{Hom}
\DeclareMathOperator{\Ker}{Ker}
\DeclareMathOperator{\ima}{Im}
\DeclareMathOperator{\Span}{span}
\DeclareMathOperator{\Iden}{Id}
\DeclareMathOperator{\defect}{def}
\DeclareMathOperator{\Downs}{downs}
\begin{document}

\title{The N\'eron-Severi Lie algebra of a Soergel module}
\author{Leonardo Patimo}

\maketitle

\begin{abstract}
We introduce the N\'eron-Severi Lie algebra of a Soergel module and we determine it for a large class of Schubert varieties. 
This is achieved by investigating which Soergel modules admit a tensor decomposition.
We also use the N\'eron-Severi Lie algebra to provide an easy proof of  the well-known fact that a Schubert variety is rationally smooth if and only if its Betti numbers satisfy Poincar\'e duality.
\end{abstract}

\section*{Introduction}
Let $X$ be a smooth complex projective variety of dimension $n$ and $\rho\in H^2(X,\bbR)$ be the Chern class of an ample line bundle on $X$. The Hard Lefschetz Theorem states that for any $k\in \bbN$ cupping with $\rho^k$ yields an isomorphism 
$\rho^k:H^{n-k}(X,\bbR)\raw H^{n+k}(X,\bbR)$. This assures the existence of an adjoint operator $f_\rho\in \frgl(H^*(X,\bbR))$ of degree $-2$ which together with $\rho$ generates a Lie algebra $\frg_\rho$ isomorphic to $\frsl_2(\bbR)$. 
In \cite{LL} Looijenga and Lunts defined the N\'eron-Severi Lie algebra $\frg_{NS}(X)$ of $X$ to be the Lie algebra generated by all the $\frg_\rho$ with $\rho$ an ample class.

The decomposition of $H(X):=H^*(X,\bbR)$ into irreducible $\frg_\rho$-modules is called the primitive decomposition. 
The primitive part (i.e. the lowest weight spaces for the $\frg_\rho$-action) inherits a Hodge structure from the Hodge structure of $H(X)$ and the Hodge structure of the primitive part 
determines completely the Hodge structure on $H(X)$. However, this decomposition depends on the choice of the ample class $\rho$.
Looijenga and Lunts' initial motivation was to find a ``universal'' primitive decomposition of $H(X)$, not depending on any choice: this is achieved by considering the decomposition of $H(X)$ into irreducible $\frg_{NS}(X)$-modules,
which always exists as one can prove that $\frg_{NS}(X)$ is semisimple.
One can easily generalize this construction to any complex variety, possibly singular, by replacing the cohomology $H(X)$ with the intersection cohomology $IH(X)$.

The category of Soergel modules of a Coxeter group $W$ is a full subcategory of the category of graded $R$-modules, where $R$ is a polynomial ring. 
Over a field of characteristic $0$ the category of Soergel modules is a Krull-Schmidt category whose indecomposable objects (up to shifts) are denoted by $\{\bar{B_w}\}_{w\in W}$. 
When $W$ is a Weyl group (of a reductive group $G$) then $\bar{B_{w^{-1}}}\cong IH(X_w)$, where $X_w$ is the Schubert variety corresponding to $w$ inside the flag variety of $G$.

For any real Soergel module $\bar{B_w}$ one can still define its N\'eron-Severi Lie algebra $\frg_{NS}(w)$. Since $\frg_{NS}(w)$ is semisimple and $\bar{B_w}$ is indecomposable as $R$-module (hence as $\frg_{NS}(w)$-module), it follows that $\bar{B_w}$ is an irreducible $\frg_{NS}(w)$-module. 
From this we deduce, in \S 2, an easy proof of the Carrell-Peterson criterion \cite{Ca}: a Schubert variety $X_w$ is rationally smooth if and only if the Poincar\'e polynomial of $H(X_w)$ is symmetric.
In the Appendix we explain how to extend this proof in the setting of a general Coxeter group. 

Looijenga and Lunts went on to compute $\frg_{NS}(X)$ for a flag variety $X=G/B$. They prove that it is ``as big as possible,'' meaning that it is the complete Lie algebra of endomorphisms of $H(X)$ preserving a non-degenerate
(either symmetric or antisymmetric depending on the parity of $\dim X$) bilinear form on $H(X)$.

In \S 3 we explore the case of the N\'eron-Severi Lie algebra $\frg_{NS}(X_w)$ of the intersection cohomology of an arbitrary Schubert variety, a question also posed in \cite{LL}.  
In Proposition \ref{3.7} we show, using a result of Dynkin, that $\frg_{NS}(X_w)$ is maximal if and only if it is a simple Lie algebra. 
If $\frg_{NS}(X_w)$ is not simple then $IH(X_w)$ admits a tensor decomposition $IH(X_w)=A_1\otimes A_2$, where $A_1$ (resp. $A_2$) is a $R_1$ (resp. $R_2$) module and $R_1$, $R_2$ are polynomial algebras with $R=R_1\otimes R_2$. 

Finally in \S4 we try to characterize for which $w\in W$ there is  such a decomposition.
To an element $w\in W$ we associate a graph $\mathcal{I}_w$ whose vertices are the simple reflections $S$, and in which there is an arrow $s\raw t$ whenever $ts\leq w$ and $ts\neq st$. 
We prove that if the graph $\mathcal{I}_w$ is connected and without sinks then a tensor decomposition of $IH(X_w)$ cannot exist,  hence we deduce that in this case $\frg_{NS}(X_w)$ is maximal. 
Thus for the vast majority of Schubert varieties the N\'eron-Severi Lie algebra is ``as big as possible.''

\subsection*{Acknowledgements}
I wish to warmly thank my Ph.D. supervisor Geordie Williamson for introducing me to this problem, and for many useful comment and discussion. I am also grateful to him for explaining me the content of \S A.1.
I would also like to thank
the referee for a careful reading and many useful comments.

Some of this work was completed during a research stay at the RIMS in Kyoto.  
I was supported by the Max Planck Institute in Mathematics. 
\subsection*{Notation}

All cohomology and intersection cohomology groups in this paper are considered with coefficients in the real numbers, unless otherwise stated.
Given a graded vector space or module $M=\bigoplus_{i\in \bbZ} M^i$ we denote by $M[n]$, for $n\in \bbZ$, the shifted module with $M[n]^i=M^{n+i}$.


\section{Lefschetz modules}

In this section we recall from \cite{LL} the definition and the main properties of the N\'eron-Severi Lie algebra. 

Let $M=\bigoplus_{k\in \bbZ} M_k$ be a $\bbZ$-graded finite dimensional $\bbR$-vector space. We denote by $h:M\raw M$ the map which is multiplication by $k$ on $M_k$.
Let $e:M\raw M$ be a linear map of degree $2$ (i.e. $e(M_k)\cug M_{k+2}$ for any $k\in \bbZ$). We say that $e$ has the \textit{Lefschetz property} if for any positive  integer $k$, $e^k$ gives an isomorphism
between $M_{-k}$ and $M_k$.
The Lefschetz property implies the existence of a unique linear map $f:M\raw M$, of degree $-2$, such that $\{e,h,f\}$ is a $\mathfrak{sl}_2$-triple, i.e. $\{e,h,f\}$ span a Lie subalgebra of 
$\mathfrak{gl}(M)$ isomorphic to $\mathfrak{sl}_2(\bbR)$.
We can explicitly construct $f$ as follows: first we decompose $M=\bigoplus_{k\geq 0}\bbR[e](P_{-k})$ where $P_{-k}=\Ker(e^{k+1}|_{M_{-k}})$, 
then we define, for $p_{-k}\in P_{-k}$,
$$f(e^ip_{-k})=\begin{cases} i(k-i+1)e^{i-1}p_{-k} & \text{ if }0<i\leq k, \\ 0 & \text{ if } i=0. \end{cases}$$
The uniqueness of $f$ follows from \cite[Lemma 11.1.1. (VIII)]{Bou}.

\begin{remark}\label{RealCommute}
From the construction of $f$, we also see that if $e$ and $h$ commute with an endomorphism $\varphi \in \frgl(M)$, then $f$ also commutes with $\varphi$.
\end{remark}

\begin{lemma}\label{commute}
If $h$ and $e$ belong to a semisimple subalgebra $\frg$ of $\frgl(M)$, then also $f\in \frg$. 
\end{lemma}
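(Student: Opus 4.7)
The plan is to use the Jacobson--Morozov theorem inside $\frg$ to produce an $\frsl_2$-triple $\{e,h,f'\}$ with $f' \in \frg$, and then invoke the uniqueness of $f$ (already used in the construction preceding the lemma) to conclude that $f' = f$.

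First I would identify the Jordan-type nature of $h$ and $e$ as abstract elements of the semisimple Lie algebra $\frg$. Since $\frg$ is semisimple and $\frg \hookrightarrow \frgl(M)$ is a faithful finite-dimensional representation, the abstract Jordan decomposition in $\frg$ agrees with the Jordan decomposition in $\frgl(M)$. The operator $h$ acts diagonalizably on $M$ by construction, so $h$ is a semisimple element of $\frg$. The operator $e$ has degree $2$ on the finite-dimensional graded space $M$, hence is nilpotent on $M$, so $e$ is a nilpotent element of $\frg$. Moreover $[h,e]=2e$ by the degree condition on $e$.

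Next I would apply the refined form of the Jacobson--Morozov theorem: given a semisimple $h \in \frg$ and a nilpotent $e \in \frg$ with $[h,e]=2e$, there exists $f' \in \frg$ such that $\{e,h,f'\}$ is an $\frsl_2$-triple in $\frg$, i.e. $[h,f']=-2f'$ and $[e,f']=h$ (see, e.g., Bourbaki, \textit{Groupes et alg\`ebres de Lie}, Ch.~VIII, \S11). Since $\frg \subseteq \frgl(M)$, the triple $\{e,h,f'\}$ is in particular an $\frsl_2$-triple inside $\frgl(M)$.

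Finally, by the uniqueness of $f$ in an $\frsl_2$-triple with prescribed $e$ and $h$ (the statement from \cite[Lemma 11.1.1.(VIII)]{Bou} already invoked in the paragraph preceding the lemma), we must have $f' = f$, and therefore $f \in \frg$.

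The only nontrivial ingredient is the refined Jacobson--Morozov, which gives us $f'$ with the \emph{same} $h$ rather than merely some $\frsl_2$-triple extending $e$; this is the main obstacle, but it is classical and can simply be cited. All the other steps reduce to the compatibility of Jordan decomposition with faithful representations of semisimple Lie algebras and to the uniqueness statement already recorded in the excerpt.
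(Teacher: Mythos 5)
Your reduction to ``refined Jacobson--Morozov'' contains a genuine gap: the statement you cite is false as you state it. Having a semisimple element $h$ and a nilpotent element $e$ of a semisimple Lie algebra $\frg$ with $[h,e]=2e$ does \emph{not} guarantee an $\frsl_2$-triple $\{e,h,f'\}$ inside $\frg$. For instance, take $\frg=\frsl_2\times\frsl_2$ acting on $M=\bbR^2\oplus\bbR^2$, with $e=(e_1,0)$ and $h=(h_1,h_1)$ for a standard triple $(e_1,h_1,f_1)$ in the first factor: then $h$ is semisimple, $e$ is nilpotent, $[h,e]=2e$, but $[e,\frg]=\Span\{e_1,h_1\}\times\{0\}$ does not contain $h$, so no such $f'$ exists. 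The correct classical statement (Morozov's lemma, Bourbaki Ch.~VIII, \S 11) requires the extra hypothesis $h\in[e,\frg]$, and in the situation of the lemma this hypothesis is not something you can simply observe: knowing $h\in[e,\frg]$ is essentially equivalent to knowing that the (unique) $f$ lies in $\frg$, which is what you are trying to prove. So as written the argument assumes what it needs at the key step; the remaining ingredients (agreement of Jordan decompositions, $[h,e]=2e$, uniqueness of $f$) are fine but do not close this hole.

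The standard way to close it --- and the route the paper takes --- is not to build a new triple inside $\frg$ but to project the existing one: since $\frg$ is semisimple, the adjoint action of $\frg$ on $\frgl(M)$ is completely reducible, so $\frgl(M)=\frg\oplus\mathfrak{a}$ with $[\frg,\mathfrak{a}]\cug\mathfrak{a}$. Writing $f=f'+f''$ with $f'\in\frg$, $f''\in\mathfrak{a}$, the relations $[h,f]=-2f$ and $[e,f]=h$ split along this decomposition (because $e,h\in\frg$ and $h\in\frg$), so $\{e,h,f'\}$ is again an $\frsl_2$-triple; the uniqueness of $f$ that you already invoke then gives $f=f'\in\frg$. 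If you want to keep your Jacobson--Morozov formulation, you must first establish $h\in[e,\frg]$, and the projection argument is precisely how one does that.
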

\begin{proof}
Since $\frg$ is semisimple, the adjoint representation of $\frg$ on $\frgl(M)$  induces a splitting $\frg\oplus \mathfrak{a}$, with $[\frg,\mathfrak{a}]\cug \mathfrak{a}$. If $f=f'+f''$ with $f'\in \frg$ and $f''\in \mathfrak{a}$, then $\{e,h,f'\}$ is also an $\frsl_2$-triple. The uniqueness of $f$ implies $f=f'$, thus $f\in \frg$.
\end{proof}
 
Now let $V$  be a finite dimensional $\bbR$-vector space. We regard it as a graded abelian Lie algebra homogeneous in degree $2$ and we consider a graded Lie algebra homomorphism $\mathfrak{e}:V\raw \frgl(M)$ 
(thus the image $\mathfrak{e}(V)$ consists of commuting linear maps of degree $2$). We say that $M$ is a $V$-\emph{Lefschetz module} if there exists $v\in V$ such that $e_v:=\mathfrak{e}(v)$ has the Lefschetz property.
We denote by $V_\mathcal{L}\cug V$ the subset of elements satisfying the Lefschetz property. If $\mathfrak{e}$ is injective, and we can always assume so by replacing $V$ with $\mathfrak{e}(V)$, then $V_\mathcal{L}$ is Zariski
open in $V$. Thus, if $V_\mathcal{L}\neq \emptyset$ there exists a regular map $\mathfrak{f}:V_\mathcal{L}\raw \mathfrak{gl}(M)$ such that $\{\mathfrak{e}(v),h,\mathfrak{f}(v)\}$ is a $\frsl_2$-triple.

\begin{definition}
Let $M$ be a $V$-Lefschetz module.
We define $\frg(V,M)$ to be the Lie subalgebra of $\frgl(M)$ generated by $\mathfrak{e}(V)$ and $\mathfrak{f}(V_\mathcal{L})$. We call $\frg(V,M)$ the \emph{N\'eron-Severi Lie algebra} of the $V$-Lefschetz module $M$.
\end{definition}

The following simple Lemma is needed in Section \ref{Subalgebra}:
\begin{lemma}\label{double}
Let $M$ be a $V$-Lefschetz module. Then $M\oplus M$ is also a $V$-Lefschetz module with respect to the diagonal action of $V$, and $\frg(V,M)\cong \frg(V,M\oplus M)$.
\end{lemma}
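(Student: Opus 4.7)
The plan is to show that both the Lefschetz structure and the $\frsl_2$-triple attached to each $v\in V_\mathcal{L}$ transport diagonally from $M$ to $M\oplus M$, so that the whole N\'eron-Severi Lie algebra lives on the diagonal of $\frgl(M)\oplus\frgl(M)\subseteq \frgl(M\oplus M)$ and is isomorphic to the original one.

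First, I would describe the diagonal action explicitly: the homomorphism $\mathfrak{e}^{\oplus 2}\colon V\to \frgl(M\oplus M)$ defined by $v\mapsto \mathfrak{e}(v)\oplus \mathfrak{e}(v)$ is still a graded Lie algebra map of degree $2$, where $M\oplus M$ carries the grading with degree operator $h\oplus h$. If $\mathfrak{e}(v)$ satisfies the Lefschetz property on $M$, then its $k$-th power on $M\oplus M$ is just $\mathfrak{e}(v)^k\oplus \mathfrak{e}(v)^k$, which is an isomorphism between the degree $-k$ and degree $k$ parts of $M\oplus M$. Hence $M\oplus M$ is a $V$-Lefschetz module, and the sets of Lefschetz elements actually coincide: $V_\mathcal{L}^{M\oplus M}=V_\mathcal{L}^{M}$.

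Next, I would identify the $\frsl_2$-triple on $M\oplus M$. The triple $\{\mathfrak{e}(v)\oplus \mathfrak{e}(v),\,h\oplus h,\,\mathfrak{f}(v)\oplus\mathfrak{f}(v)\}$ satisfies the $\frsl_2$-relations componentwise, so it is an $\frsl_2$-triple in $\frgl(M\oplus M)$. By the uniqueness statement recalled after the construction of $f$ (via \cite[Lemma 11.1.1.(VIII)]{Bou}), this must be the triple produced by the canonical $\mathfrak{f}$-map for $M\oplus M$; that is, $\mathfrak{f}^{M\oplus M}(v)=\mathfrak{f}(v)\oplus\mathfrak{f}(v)$ for every $v\in V_\mathcal{L}$.

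Finally, I would assemble the isomorphism of Lie algebras. By definition, $\frg(V,M\oplus M)$ is generated by the diagonal elements $\mathfrak{e}(v)\oplus \mathfrak{e}(v)$ for $v\in V$ together with $\mathfrak{f}(v)\oplus\mathfrak{f}(v)$ for $v\in V_\mathcal{L}$. Since brackets of diagonal operators remain diagonal, i.e. $[x\oplus x,y\oplus y]=[x,y]\oplus [x,y]$, the entire generated subalgebra sits inside the diagonal copy $\Delta(\frgl(M))\subseteq \frgl(M)\oplus \frgl(M)\subseteq \frgl(M\oplus M)$. Projection onto the first summand is a Lie algebra isomorphism $\Delta(\frgl(M))\xrightarrow{\sim}\frgl(M)$ sending the generators of $\frg(V,M\oplus M)$ exactly to the generators of $\frg(V,M)$, which yields the desired isomorphism $\frg(V,M)\cong \frg(V,M\oplus M)$.

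There is no real obstacle here; the only point to be a little careful about is invoking the uniqueness of $f$ in the $\frsl_2$-triple (so that $\mathfrak{f}^{M\oplus M}$ is forced to be diagonal rather than some a priori different operator of degree $-2$), after which the identification of the generating sets is purely formal.
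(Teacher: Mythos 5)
Your proof is correct and follows essentially the same route as the paper: transport the Lefschetz property and the $\frsl_2$-triples diagonally, note that $\mathfrak{f}^{M\oplus M}(v)=\mathfrak{f}(v)\oplus\mathfrak{f}(v)$ by uniqueness, and conclude that $x\mapsto x\oplus x$ identifies the generators and hence the Lie algebras. Your explicit appeal to the uniqueness of $f$ is exactly the point the paper uses implicitly, so nothing is missing.
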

\begin{proof}
For any $x\in \frgl(M)$ let $x\oplus x\in \frgl(M\oplus M)$ denote the endomorphism defined by $(x\oplus x)(\mu,\mu')=(x(\mu),x(\mu'))$ for all $\mu,\mu'\in M$.

An element $e \in \frgl(M)$ has the Lefschetz property on $M$ if and only if $e\oplus e$ has the Lefschetz property on $M\oplus M$. Moreover if $\{e,h,f\}$ is an $\frsl_2$-triple in $\frgl(M)$, then $\{e\oplus e,h \oplus h, f\oplus f\}$ is an $\frsl_2$-triple in $\frgl(M\oplus M)$. Therefore the algebra $\frg(V,M\oplus M)$ is generated by the elements $\mathfrak{e}(v)\oplus\mathfrak{e}(v)$, with $v\in V$, and by  $\mathfrak{f}(v)\oplus \mathfrak{f}(v)$, with $v\in V_{\mathcal{L}}$. It follows that the map $x\mapsto x\oplus x$ induces an isomorphism $\frg(V,M)\cong \frg(V,M\oplus M)$.
\end{proof}
\subsection{Polarization of Lefschetz modules}

Assume that $M$ is evenly (resp. oddly) graded and let $\phi: M\times M\raw \bbR$ be a non-degenerate  symmetric (resp. antisymmetric) form such that
$\phi(M_k,M_l)=0$ unless $k\neq -l$. 

We assume for simplicity $V\cug \frgl(M)$. We say that $V$ \emph{preserves} $\phi$ if  every $v\in V$ leaves $\phi$ infinitesimally invariant: 
$$\phi(v(x),y)+\phi(x,v(y))=0\quad\forall x,y\in M.$$ 
 
Since the Lie algebra $\mathfrak{aut}(M,\phi)$ of endomorphisms preserving $\phi$ is semisimple, if $V$ preserves $\phi$ then we can apply the Jacobson-Morozov theorem to deduce that $\frg(V,M)\cug \mathfrak{aut}(M,\phi)$.

For any operator $e:M\raw M$ of degree $2$ preserving $\phi$ we define a form 
$\langle\cdot,\cdot\rangle_e$ on $M_{-k}$, for $k\geq 0$, by $\langle m,m'\rangle_e=\phi(e^km,m')$. One checks easily that $\langle\cdot,\cdot\rangle_e$ is symmetric.

We say that $e$ is a \emph{polarization} if the symmetric form $\langle\cdot,\cdot\rangle_e$ is definite on the primitive part $P_{-k}=\Ker(e^{k+1})|_{M_{-k}}$.
If there exists a polarization $e\in V$, then we call $(M,\phi)$ a \emph{polarized} $V$-Lefschetz module.

\begin{remark} Each polarization $e$ has the Lefschetz property. The injectivity of $e^k|_{M_{-k}}$ follows easily from the non-degeneracy of $\langle\cdot,\cdot\rangle_e$ on $P_{-k}$.
From the non-degeneracy of $\phi$ we get $\dim M_{-k}=\dim M_k$ for any $k\geq 0$, hence $e^k|_{M_{-k}}$ is also surjective.
\end{remark}

\begin{prop}\label{gss}
Let $(M,\phi)$ be a polarized $V$-Lefschetz module. Then the Lie algebra $\frg(V,M)$ is semisimple.
 \end{prop}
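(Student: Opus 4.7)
The plan is to produce a Cartan involution $\theta$ of $\mathfrak{gl}(M)$ that preserves $\frg(V,M)$: this will make $\frg(V,M)$ reductive, and semisimplicity will then follow from the fact that $\frg(V,M)$ equals its own derived subalgebra.

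Fix a polarization $e_0 \in V$ with $\frsl_2$-triple $\{e_0, h, f_0\}$ and use the Lefschetz decomposition $M = \bigoplus_{k \geq 0}\bigoplus_{j=0}^{k} e_0^j P_{-k}$ to build a positive definite symmetric form $(\cdot,\cdot)$ on $M$. Declare the decomposition orthogonal; let $\eta_k \in \{\pm 1\}$ be the sign rendering $\langle\cdot,\cdot\rangle_{e_0}$ positive definite on $P_{-k}$ (which exists by the polarization hypothesis); and for $p,q \in P_{-k}$ set
\[(e_0^j p,\, e_0^j q) := c_{j,k}\,\eta_k\,\phi(e_0^k p,\, q),\]
with positive constants $c_{j,k} = j!\,k!/(k-j)!$ chosen so that, putting $\theta(x) := -x^*$ for the adjoint $*$ relative to $(\cdot,\cdot)$, a direct computation yields $\theta(e_0) = -f_0$. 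By construction $(\cdot,\cdot)$ is positive definite, so $\theta$ is a Cartan involution of $\mathfrak{gl}(M)$ that restricts to one on the semisimple algebra $\mathfrak{aut}(M,\phi)$; note also that $\theta(h) = -h$, since the grading operator is self-adjoint.

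The heart of the argument, and the main obstacle, is to verify that $\theta$ stabilizes $\frg(V,M)$. The natural target is the identity $\theta(\mathfrak{e}(v)) = -\mathfrak{f}(v)$ for every polarization $v \in V$: once this is known, since polarizations form a nonempty open cone in $V$ spanning $V$ as an $\bbR$-vector space (polarization is an open condition, and $v = (e_0 + \varepsilon v)/\varepsilon - e_0/\varepsilon$ for small $\varepsilon$), $\bbR$-linearity of $\theta$ gives $\theta(\mathfrak{e}(V)) \subseteq \Span_\bbR \mathfrak{f}(V_{\mathcal{L}}) \subseteq \frg(V,M)$, and the symmetric statement $\theta(\mathfrak{f}(V_{\mathcal{L}})) \subseteq \frg(V,M)$ then follows by applying $\theta$ a second time. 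To prove the identity for a polarization $v$, one invokes the uniqueness of the $f$-element in an $\frsl_2$-triple with prescribed $h$ (Bourbaki, Lemma 11.1.1(viii), cited in the excerpt): it suffices to check that $\{\mathfrak{e}(v),\, h,\, -\theta(\mathfrak{e}(v))\}$ satisfies the $\frsl_2$-triple relations. The relation $[h, -\theta(\mathfrak{e}(v))] = -2(-\theta(\mathfrak{e}(v)))$ is immediate from degree reasons, but verifying $[\mathfrak{e}(v), -\theta(\mathfrak{e}(v))] = h$ for a polarization $v$ different from $e_0$ is genuinely delicate; it requires exploiting the polarization property of $v$, the commutativity of the family $\mathfrak{e}(V)$, and a careful analysis of the interaction between $v$ and the Weil-type form $(\cdot,\cdot)$ built from $e_0$.

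With $\theta$-stability established, $\frg(V,M)$ is a Lie subalgebra of $\mathfrak{gl}(M)$ invariant under a Cartan involution, and is therefore reductive by standard theory. Finally, since $h = [e_0, f_0] \in \frg(V,M)$, each generator $\mathfrak{e}(v)$ (resp.\ $\mathfrak{f}(v)$) is a nonzero $\mathrm{ad}(h)$-eigenvector, so equals $\tfrac{1}{2}[h, \mathfrak{e}(v)]$ (resp.\ $-\tfrac{1}{2}[h, \mathfrak{f}(v)]$); it follows that $\frg(V,M) = [\frg(V,M), \frg(V,M)]$. A reductive Lie algebra coinciding with its derived subalgebra is semisimple, completing the proof.
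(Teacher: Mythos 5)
Your overall reduction (reductive plus $\frg(V,M)=[\frg(V,M),\frg(V,M)]$ implies semisimple) is fine, but the heart of your argument --- the $\theta$-stability of $\frg(V,M)$ --- is precisely what is not proved, and the identity you propose as the route to it, $\theta(\mathfrak{e}(v))=-\mathfrak{f}(v)$ for \emph{every} polarization $v$, is false once $v$ is not proportional to $e_0$. The metric you build is adapted to the single operator $e_0$; the relation ``dual Lefschetz operator $=$ metric adjoint'' only holds for the class used to construct the metric. Concretely, take $M=H(\mathbb{P}^1\times\mathbb{P}^1,\bbR)[2]$, $V=H^2$ with basis $x,y$ (so $x^2=y^2=0$), $e_0=x+y$, and $v=ax+by$ with $a,b>0$, $a\neq b$. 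On $M_0$ the operator $\mathfrak{f}(v)$ annihilates the $v$-primitive line $\bbR(ax-by)$, while the adjoint of $\mathfrak{e}(v)$ with respect to your form annihilates the $(\cdot,\cdot)$-orthogonal complement of $ax+by$ in $M_0$. Since your construction makes $M_0=\bbR(x+y)\oplus\bbR(x-y)$ an orthogonal decomposition with positive norms, one computes $(ax-by,\,ax+by)=\tfrac{a^2-b^2}{4}\bigl(\|x+y\|^2+\|x-y\|^2\bigr)\neq 0$, so the two kernels differ, $\mathfrak{e}(v)^*\neq\mathfrak{f}(v)$, and $\{\mathfrak{e}(v),h,-\theta(\mathfrak{e}(v))\}$ is not an $\frsl_2$-triple. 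Thus the step you flag as ``genuinely delicate'' cannot be carried out as stated. There is also a secondary gap: even if the identity held for polarizations, $\theta$-stability of $\frg(V,M)$ requires $\theta(\mathfrak{f}(v))\in\frg(V,M)$ for all $v\in V_{\mathcal{L}}$, whereas applying $\theta$ twice only treats $\mathfrak{f}$ of polarizations; repairing this would need an extra argument (e.g.\ via Lemma \ref{commute}), which itself presupposes semisimplicity of the subalgebra in question.

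The paper's proof avoids choosing any metric at all: it shows directly that $M$ is a completely reducible $\frg(V,M)$-module. Given a submodule $N\cug M$, one decomposes it under the $\frsl_2$-triple of a \emph{single} polarization $e$ as $N=\bigoplus_{k\geq 0}\bbR[e]P^N_{-k}$ with $P^N_{-k}\cug P_{-k}$; the definiteness of $\langle\cdot,\cdot\rangle_e$ on $P_{-k}$ forces $\phi|_N$ to be non-degenerate, so the $\phi$-orthogonal complement is an invariant complement. Reductivity follows, and semisimplicity is then concluded exactly as in your last step, since the generators are commutators. If you want to salvage a Cartan-involution argument, you would need an independent proof that $\frg(V,M)$ is stable under some Cartan involution of $\mathfrak{aut}(M,\phi)$, and the natural arguments for that pass through semisimplicity first, so this route does not shortcut the paper's approach.
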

\begin{proof}
Since $\frg(V,M)$ is generated by commutators, it is sufficient to prove it is reductive. This will be done by proving that the natural representation on $M$ is completely reducible.
Let $N\cug M$ be a $\frg(V,M)$-submodule. It suffices to show that the restriction of $\phi$ to $N$ is non-degenerate, so that we can take the $\phi$-orthogonal as a complement of $N$.

Let $e\in V$ be a polarization and let $f$ be such that $\{e,h,f\}$ is a $\mathfrak{sl}_2$-triple. We can decompose $N$ into irreducible $\mathfrak{sl}_2$-modules with respect to this triple. We obtain
$N=\bigoplus_{k\geq 0}\bbR[e]P^N_{-k}$ where $P^N_{-k}=\Ker(e^{k+1}|_{N_{-k}})$.
This decomposition is $\phi$-orthogonal since, if $k>h$, we have
$$\phi(e^ap_{-k},e^{\frac{k+h}{2}-a}p_{-h})=(-1)^a\phi(p_{-k},e^{\frac{k+h}{2}}p_{-h})=0$$
for any $p_{-k}\in P_{-k}$, $p_{-h}\in P_{-h}$ and any integer $a\geq 0$.

We consider now a single summand $\bbR[e]P^N_{-k}$.
Because the form $\langle\cdot,\cdot\rangle_e$ is definite on $P^N_{-k}\cug P_{-k}$, it follows that $\phi$ is non-degenerate on $P^N_{-k}+e^kP^N_{-k}$. Since $e$ preserves $\phi$,  
the restriction of $\phi$ to $e^aP^N_{-k}+e^{k-a}P^N_{-k}$ is also non-degenerate for any $0\leq a\leq k$. We conclude since the subspaces $e^aP^N_{-k}+e^{k-a}P^N_{-k}$ and $e^bP^N_{-k}+e^{k-b}P^N_{-k}$ are $\phi$-orthogonal for $a\neq b,k-b$.
\end{proof}

\begin{remark}
The proof of Proposition \ref{gss} actually shows that the Lie algebra generated by $V$ and $\mathfrak{f}(e)$, where $e$ is a polarization, is semisimple. Therefore, by Lemma \ref{commute}, if $e$ is any polarization in $V$, then $V$ and $\mathfrak{f}(e)$ generate $\frg(V,M)$.
\end{remark}

\begin{cor}\label{cor1}
Let $(M, \phi)$ be a polarized $V$-Lefschetz module. If $N\cug M$ is a graded $V$-submodule satisfying $\dim N_{-k}=\dim N_k$ for any $k\geq 0$, then there exists a complement $N'\cug M$ such that $M=N\oplus N'$ as a $\frg(V,M)$-module.
\end{cor}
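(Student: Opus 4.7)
The plan is to reduce to the situation of Proposition \ref{gss} by showing that $N$ is already a $\frg(V,M)$-submodule: then its $\phi$-orthogonal complement $N':=N^\perp$ will provide the required decomposition. By the Remark following Proposition \ref{gss}, it suffices to fix a polarization $e\in V$ (which exists by hypothesis) and verify that the associated $f=\mathfrak{f}(e)$ preserves $N$.

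First, I would check that $e|_N$ has the Lefschetz property on $N$. Since $e$ has the Lefschetz property on $M$ and $N$ is $V$-invariant, the restriction $e^k\colon N_{-k}\lraw N_k$ is injective; the dimension hypothesis $\dim N_{-k}=\dim N_k$ then forces it to be an isomorphism. Hence there is a primitive decomposition $N=\bigoplus_{k\geq 0}\bbR[e]P^N_{-k}$ with $P^N_{-k}=\Ker(e^{k+1}|_{N_{-k}})=N_{-k}\cap P_{-k}$.

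Next, since $P^N_{-k}\cug P_{-k}$ and $f$ annihilates $P_{-k}$ by construction, $f$ also annihilates $P^N_{-k}$. The explicit formula $f(e^ip_{-k})=i(k-i+1)e^{i-1}p_{-k}$ for $p_{-k}\in P^N_{-k}$ then shows $f(\bbR[e]P^N_{-k})\cug\bbR[e]P^N_{-k}\cug N$, whence $f(N)\cug N$. By the Remark cited above, $N$ is therefore $\frg(V,M)$-stable.

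Finally, the argument of Proposition \ref{gss} applies verbatim to the decomposition $N=\bigoplus_{k\geq 0}\bbR[e]P^N_{-k}$: the summands are mutually $\phi$-orthogonal, and on each of them $\phi$ is non-degenerate because $\langle\cdot,\cdot\rangle_e$ is definite on $P^N_{-k}\cug P_{-k}$. Thus $\phi|_N$ is non-degenerate, so $N':=N^\perp$ is a vector-space complement, and it is $\frg(V,M)$-invariant because $\frg(V,M)\cug\mathfrak{aut}(M,\phi)$. The crux of the argument is the identification $P^N_{-k}=N_{-k}\cap P_{-k}$, which simultaneously produces the $f$-invariance of $N$ and the positivity of the polarization on each $P^N_{-k}$; once this is in hand, the rest is a direct reuse of machinery already developed.
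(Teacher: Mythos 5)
Your proposal is correct and follows essentially the same route as the paper: the dimension hypothesis upgrades injectivity of the Lefschetz power on $N_{-k}$ to an isomorphism, the inclusion $P^N_{-k}\cug P_{-k}$ together with the explicit formula for $\mathfrak{f}$ gives stability of $N$, and non-degeneracy of $\phi|_N$ is obtained exactly as in Proposition \ref{gss}. The only (immaterial) difference is that you check $\mathfrak{f}$-stability for a single polarization and invoke the Remark that $V$ and $\mathfrak{f}(e)$ generate $\frg(V,M)$, whereas the paper applies the same restriction argument to an arbitrary Lefschetz element $v\in V_{\mathcal{L}}$.
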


\begin{proof}
Let $v\in V$ having the Lefschetz property on $M$. Since $v^k|_{N_{-k}}$ is injective and $\dim N_{-k}=\dim N_k$,  $v$ also has the Lefschetz property on $N$, therefore $N$ is $\mathfrak{f}(v)$-stable. This implies that $N$ is a $\frg(V,M)$-submodule of $M$.

As in the proof of Proposition \ref{gss} one can show that the restriction of $\phi$ to $N$ is non-degenerate, so the $\phi$-orthogonal subspace $N'$ is a $\frg(V,M)$-stable complement of $N$.
\end{proof}

\begin{remark}
The definitions given above arise naturally in the setting of complex projective (or compact K\"ahler) manifolds. Let $X$ be a complex projective manifold of complex dimension $n$ and assume that $X$ is of Hodge-Tate type, 
i.e. if 
$$\displaystyle H^*(X,\bbC)=\bigoplus_{p,q\geq 0} H^{p,q}$$ is the Hodge decomposition of $X$ then $H^{p,q}=0$ for $p\neq q$. In particular the cohomology of $X$ vanishes in odd degrees.

Let $M=H(X,\mathbb{R})[n]$ be the cohomology of $X$ shifted by $n$ and let $\phi$ be the \emph{intersection form}:
$$\phi(\alpha,\beta)=(-1)^\frac{k(k-1)}{2}\int_X\alpha\wedge \beta,\qquad\forall\alpha\in H^{n+k}(X,\mathbb{R}),\;\forall\beta\in H^{n-k}(X,\mathbb{R}).$$
Notice that $\phi$ is symmetric (resp. antisymmetric) if $n$ is even (resp. $n$ is odd).

Let $\rho\in H^2(X,\mathbb{R})$ be the first Chern class of an ample line bundle on $X$. Then the Hard Lefschetz theorem and the Hodge-Riemann bilinear relations imply that $\rho$ is a polarization of $(M,\phi)$. 
It follows that $(M,\phi)$ is a polarized Lefschetz module over $H^2(X,\mathbb{R})$.

We can also replace $H^2(X,\mathbb{R})$ by the \emph{N\'eron-Severi group} $NS(X)$, i.e. the subspace of $H^2(X,\mathbb{R})$ generated by Chern classes of line bundles on $X$. We define the \emph{N\'eron-Severi Lie algebra} of $X$ as $\frg_{NS}(X)=\frg(NS(X),H^\bullet(X,\mathbb{R})[n])$.

In \cite{LL} Looijenga and Lunts consider complex manifolds with an arbitrary Hodge structure. To deal with the general case one needs to modify the definition of polarization given here in order to make it compatible with the general form of the Hodge-Riemann bilinear relations.

However all the Schubert varieties, the case in which we are mostly interested, are of Hodge-Tate type, so for simplicity we can limit ourselves to this case.
\end{remark}

\subsection{Lefschetz modules and weight filtrations}\label{WeightSection}

 Let $V$ be a finite dimensional $\mathbb{R}$-vector space and $(M,\phi)$ a polarized $V$-Lefschetz module.  In this section we show how to each element $v\in V$ we can associate a weight filtration and to any such filtration we can associate a subalgebra of  $\frg(V,M)$. In many situations the knowledge of these subalgebras turns out to be an important tool  to study $\frg(V,M)$.

\begin{lemma}\label{Weight}
Let $e$ be a nilpotent operator acting on a finite dimensional vector space $M$ such that $e^l\neq 0$ and $e^{l+1}=0$. Then there exists a unique non-increasing filtration $W$, called the \textit{weight filtration}.
$$\{0\} \cug W_{l}\cug W_{l-1}\cug \ldots \cug W_{-l+1}\cug W_{-l}=M$$ 
such that
\begin{itemize}
\item $e(W_k)\cug W_{k+2}$ for all $k$;
\item for any $0\leq k\leq l$, $e^k:\Grad^W_{-k}(M)\raw \Grad^W_k(M)$ is an isomorphism, where $\Grad^W_k(M)=W_k/W_{k+1}$.
\end{itemize}
\end{lemma}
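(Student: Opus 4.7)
The plan is to establish both existence and uniqueness by induction on $l$. The base case $l=0$ is immediate: $e=0$ and the unique choice is $W_0=M$, $W_1=0$, with both conditions vacuous.

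For the inductive step, I would first address uniqueness by observing that the extremal pieces are forced. Condition (1), iterated, gives $e^l(W_{-l+1})\cug W_{l+1}=0$, so $W_{-l+1}\cug \Ker(e^l)$; condition (2) with $k=l$ asserts that $e^l\colon M/W_{-l+1}\to W_l$ is an isomorphism (using $W_{-l}=M$ and $W_{l+1}=0$), and together these force $W_{-l+1}=\Ker(e^l)$ and $W_l=\ima(e^l)$. I would then pass to the quotient $M':=\Ker(e^l)/\ima(e^l)=W_{-l+1}/W_l$. Since $e^l$ kills $\Ker(e^l)$, the induced operator $e|_{M'}$ satisfies $(e|_{M'})^l=0$, hence has nilpotency index strictly less than $l$. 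The induced filtration $W'_k:=W_k/W_l$ on $M'$ (for $-l+1\leq k\leq l$) inherits conditions (1) and (2), so by the induction hypothesis it coincides with the unique weight filtration of $(M',e|_{M'})$; this pins down all intermediate $W_k$.

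Existence follows by reversing the construction: set $W_{-l}=M$, $W_{-l+1}=\Ker(e^l)$, $W_l=\ima(e^l)$, and define the intermediate $W_k$ by pulling back the weight filtration of $(M',e|_{M'})$ (which exists by induction) along the projection $\Ker(e^l)\raw M'$; then verify (1) and (2) directly from the corresponding properties on $M'$. Alternatively, a non-inductive existence proof proceeds via the Jacobson-Morozov theorem: embed $e$ in an $\frsl_2$-triple $\{e,h,f\}$ and set $W_k:=\bigoplus_{j\geq k}\Ker(h-j\cdot \Iden)$, whereupon $e(W_k)\cug W_{k+2}$ follows from $[h,e]=2e$, and the isomorphism $e^k\colon \Grad^W_{-k}(M)\raw \Grad^W_k(M)$ follows from standard $\frsl_2$ representation theory applied to each isotypic component. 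The main obstacle is essentially bookkeeping: one must carefully align the indexing across the induction and check that the ``trivial'' outer pieces on $M'$ (where $W'_k=0$ for $k>l'$, or $W'_k=M'$ for $k\leq -l'$, with $l'$ the nilpotency index on $M'$) are consistent with the conditions inherited from $M$.
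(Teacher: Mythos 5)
Your argument is correct, but it is genuinely different from what the paper does: the paper does not prove Lemma \ref{Weight} at all, it simply cites \cite[Proposition A.2.2]{CaZe}. Your inductive proof is essentially the standard argument found in such references (and in Deligne's treatment of the monodromy weight filtration): conditions (1) and (2) at the extreme index force $W_{-l+1}=\Ker(e^l)$ and $W_l=\ima(e^l)$, and one then passes to the subquotient $M'=\Ker(e^l)/\ima(e^l)$ on which $e$ has strictly smaller nilpotency index. Two small points deserve to be made explicit when writing this up. For uniqueness, the induced filtration $W'_k=W_k/W_l$ is a priori indexed by $[-l+1,l-1]$ rather than $[-l',l']$; but for $k>l'$ the map in condition (2) is induced by $(e')^k=0$, and a zero map that is an isomorphism forces $\Grad^{W'}_{\pm k}(M')=0$, so the outer pieces collapse and the induction hypothesis applies --- this is exactly the bookkeeping you flag, and it does go through. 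For existence by pulling back, the verifications $e(W_{l-1})\cug W_{l+1}=0$ and $e(M)\cug W_{-l+2}$ are not quite formal consequences of the two bullet conditions on $M'$ alone: one uses the forced identifications $W'_{l'}=\ima\bigl((e')^{l'}\bigr)$ and $W'_{-l'+1}=\Ker\bigl((e')^{l'}\bigr)$ (e.g.\ a lift of an element of $W'_{l'}$ has the form $e^{l'}(y)+e^l(z)$ with $y\in\Ker(e^l)$, which $e$ kills). Your Jacobson--Morozov alternative sidesteps these verifications entirely and fits the paper particularly well, since in Section \ref{WeightSection} the paper itself produces an $\frsl_2$-triple $\{e,h',f'\}$ and observes that the eigenspace filtration of $h'$ satisfies the defining conditions, i.e.\ that $h'$ splits the weight filtration; what the inductive route buys instead is self-containedness (no appeal to Jacobson--Morozov) and it yields uniqueness, which the $\frsl_2$ construction alone does not.
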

\begin{proof}See, for example, \cite[Proposition A.2.2]{CaZe}.\end{proof}

\begin{lemma}\label{JM0}
Let $e\in V$ (not necessarily a Lefschetz operator). Then there exists a $\frsl_2$-triple $\{e, h',f' \}$ contained in $\frg(V,M)$ such that $h'$ is of degree $0$.
\end{lemma}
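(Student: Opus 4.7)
The plan is to apply the Jacobson--Morozov theorem inside the semisimple Lie algebra $\frg:=\frg(V,M)$ (Proposition~\ref{gss}) and then homogenize the resulting triple with respect to the natural $\bbZ$-grading on $\frg$. First I observe that $h\in \frgl(M)$ actually lies in $\frg$: since $(M,\phi)$ is a polarized Lefschetz module, there exists some Lefschetz $v_0\in V_\mathcal{L}$, and then $h=[\mathfrak{e}(v_0),\mathfrak{f}(v_0)]\in \frg$. Consequently, the adjoint action of $h$ equips $\frg$ with a $\bbZ$-grading $\frg=\bigoplus_{i\in\bbZ}\frg_{2i}$ with $\frg_{2i}=\{x\in\frg:[h,x]=2ix\}$, and in this grading $\mathfrak{e}(V)\subset \frg_2$, $\mathfrak{f}(V_\mathcal{L})\subset\frg_{-2}$, and $h\in\frg_0$.

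The given element $e=\mathfrak{e}(e)\in\frg_2$ is nilpotent because any operator of positive degree on a bounded $\bbZ$-graded finite-dimensional vector space is nilpotent. Applying the classical Jacobson--Morozov theorem to $e\in\frg$ therefore produces an $\frsl_2$-triple $(e,h_1,f_1)\subset \frg$. Decomposing $h_1=\sum_i (h_1)_{2i}$ into homogeneous components and projecting the relation $[h_1,e]=2e$ onto graded pieces shows $[(h_1)_0,e]=2e$, while each $(h_1)_{2i}$ with $i\neq 0$ lies in the centralizer $Z_\frg(e)$. Similarly, projecting $[e,f_1]=h_1$ gives $[e,(f_1)_{-2}]=(h_1)_0$. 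The naive candidate $((h_1)_0,(f_1)_{-2})$ satisfies two of the three defining relations of an $\frsl_2$-triple, but generally fails $[(h_1)_0,(f_1)_{-2}]=-2(f_1)_{-2}$, so more care is needed.

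To finish, I would invoke the graded version of the Jacobson--Morozov theorem, which asserts that in any $\bbZ$-graded semisimple Lie algebra, a nilpotent element lying in a single graded piece can be completed to a homogeneous $\frsl_2$-triple of the expected degrees. One derives this from Kostant's theorem on the conjugacy of $\frsl_2$-triples completing a fixed nilpotent $e$ (under the adjoint action of the unipotent radical of $Z_\frg(e)$), together with the fact that this unipotent radical inherits the $\bbZ$-grading from $\frg$, so that one can iteratively eliminate the nonzero-degree components of $h_1$ via $\exp(\operatorname{ad} y)$ for suitable homogeneous $y\in Z_\frg(e)$ of nonzero degree. The resulting $\frsl_2$-triple $(e,h',f')\subset\frg$ has $h'\in\frg_0$ and $f'\in\frg_{-2}$, which in particular furnishes the $h'$ of degree $0$ required by the lemma.

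The main obstacle is precisely this homogenization step: the classical Jacobson--Morozov gives a semisimple $h_1$ but does not guarantee homogeneity, and the naive graded projection $((h_1)_0,(f_1)_{-2})$ does not assemble into an $\frsl_2$-triple. Once graded Jacobson--Morozov is available, the conclusion is immediate, since the statement only demands $h'\in\frg_0$ and not any further identification of $h'$ (for example with the grading element $h$ itself).
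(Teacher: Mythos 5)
The paper offers no argument of its own here -- its ``proof'' is the citation \cite[Lemma 5.2]{LL} -- and your proof is essentially the standard argument behind that citation: semisimplicity of $\frg(V,M)$ from Proposition \ref{gss}, the observation that $h=[\mathfrak{e}(v_0),\mathfrak{f}(v_0)]$ lies in $\frg(V,M)$ so that $\operatorname{ad}h$ endows it with a grading, nilpotency of $e$ as a positive-degree operator on a bounded graded space, classical Jacobson--Morozov, and then homogenization of the resulting triple. All of the intermediate computations ($[(h_1)_0,e]=2e$, the components $(h_1)_i$ with $i\neq 0$ lying in $Z_{\frg}(e)$, and $[e,(f_1)_{-2}]=(h_1)_0$) are correct.

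The one soft spot is how you close the homogenization step. As phrased, deducing the ``graded Jacobson--Morozov theorem'' from Kostant's conjugacy theorem is not immediate: conjugacy relates two triples that already exist through $e$ and does not by itself produce a homogeneous one, so the ``iterative elimination'' of the nonzero-degree components of $h_1$ is precisely the content that would need its own induction (using, e.g., that $\operatorname{ad}h_1$ acts with nonnegative eigenvalues on $Z_{\frg}(e)$). The graded statement is standard and citable, so this is a presentational weakness rather than a fatal error, but the detour is unnecessary: the two relations you already extracted, $[(h_1)_0,e]=2e$ and $(h_1)_0=[e,(f_1)_{-2}]\in\operatorname{im}(\operatorname{ad}e)$, are exactly the hypotheses of Morozov's completion lemma (Bourbaki, Chap.~VIII, \S 11, the same source the paper invokes for the uniqueness of $f$), which yields $f'$ such that $\{e,(h_1)_0,f'\}$ is an $\frsl_2$-triple. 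Since the lemma only requires $h'$ to have degree $0$, this finishes the proof; if you also want $f'\in\frg_{-2}$, replace $f'$ by its degree $-2$ component, which still satisfies both bracket relations by projecting onto graded pieces. (One should also set aside the degenerate case $e=0$, where Jacobson--Morozov does not apply and no genuine $\frsl_2$-triple exists.)
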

\begin{proof}
This is \cite[Lemma 5.2]{LL}.
\end{proof}

Let  $\{e,h',f'\}$ be as is Lemma \ref{JM0} and $W_{\bullet}$ be the weight filtration of $e$.
Since $h'$ is semisimple and part of a $\frsl_2$-triple, we have a decomposition in eigenspaces $M=\bigoplus_{n\in \mathbb{Z}} M'_n$, where $M'_n=\{x \in M \mid h'\cdot x=nx\}$. We can define $\tilde{W}_k=\bigoplus_{n\geq k} M'_n$. It is easy to check that $\tilde{W}_\bullet$ satisfies the defining condition of the weight filtration of $e$. In particular, $W_{\bullet}=\tilde{W}_{\bullet}$ and $h'$ splits the weight filtration of $e$, i.e. 
$W_k=W_{k+1}\oplus M'_k$ for all $k$.

Let $h''=h-h'$. Then $(h',h'')$ is a commuting pair of semisimple elements in $\frg(V,M)$ and it defines a bigrading $M^{p,q}$ on $M$ such that $M^n=\bigoplus_{p+q=n} M^{p,q}$. Furthermore $h'$ and $h''$ also act via the adjoint representation on $\frg(V,M)$ defining a bigrading $\frg(V,M)_{p,q}$. We have $x\in \frg(V,M)_{p,q}$ if and only if $x(M^{p',q'})\cug M^{p+p',q+q'}$ for all $p',q'\in \mathbb{Z}$. For $x\in \frg(V,M)$ we denote by $x_{p,q}$ its component in $\frg(V,M)_{p,q}$.

Let $\tilde{V}$ be a subspace of $V$ containing $e$ and such that, for any $x\in \tilde{V}$, we have $x(W_k)\cug W_{k+2}$ for all $k$. Consider the graded vector space  $\Grad^WM=\bigoplus_{k\in \bbZ}\Grad_k^WM$, where $\Grad_k^WM$ sits in degree $k$.
Then $\Grad^WM$ is a $\tilde{V}$-Lefschetz module, so we can define the Lie algebra $\frg(\tilde{V},\Grad^WM)$.

Let $x\in \tilde{V}$. Since $x(W_k)\cug W_{k+2}$, then $x(M'_k)\cug \bigoplus_{n\geq k+2} M'_n$.
This implies that $x\in \frg(V,M)_{\geq 2,\bullet}$, i.e.
$x=x_{2,0}+x_{4,-2}+x_{6,-4}+\ldots$. In particular, if $x,y\in \tilde{V}$, we have $[x,y]=0$ and so $[x_{2,0},y_{2,0}]=[x,y]_{4,0}=0$.

Let $\tilde{V}_{2,0}\cug \frg(V,M)$ be the span of the degree $(2,0)$ components of elements of $\tilde{V}$. The subspace $\tilde{V}_{2,0}$ is an abelian subalgebra of $\frg(V,M)$. However, notice that in general $\tilde{V}_{2,0}$ is not a subspace of $V$.  
We denote by $M'$ the vector space $M$ with the grading defined by $h'$. Then $M'$ is a $\tilde{V}_{2,0}$-Lefschetz module (in fact $e=e_{2,0}$ is a Lefschetz operator on $M'$), so we can define the algebra $\frg(\tilde{V}_{2,0},M')$.

\begin{prop}\label{WeirdGrading}
In the setting as above,
there exists an isomorphism of Lie algebras
$ \frg(\tilde{V},\Grad^WM)\cong\frg(\tilde{V}_{2,0},M')$. In particular $\frg(V,M)$ contains a subalgebra isomorphic to $\frg(\tilde{V},\Grad^WM)$.
\end{prop}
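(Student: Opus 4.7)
The plan is to transport the $V$-Lefschetz structure on $\Grad^W M$ over to $M'$ via the vector-space isomorphism induced by the splitting of $W_\bullet$ by $h'$. Since $W_k=W_{k+1}\oplus M'_k$, the composition $\sigma_k\colon M'_k\hookrightarrow W_k\twoheadrightarrow \Grad_k^W M$ is an isomorphism, and assembling these pieces yields a graded vector space isomorphism $\sigma\colon M'\to\Grad^W M$. The first step is to verify that $\sigma$ intertwines the action of $x\in\tilde V$ on $\Grad^W M$ with the action of the $(h',h'')$-bidegree $(2,0)$ component $x_{2,0}$ on $M'$: for $m\in M'_k$ all tail terms $x_{2i,2-2i}(m)$ with $i\geq 2$ lie in $\bigoplus_{n\geq k+4}M'_n\subseteq W_{k+3}$, so $x(m)$ and $x_{2,0}(m)$ have the same image in $\Grad_{k+2}^W M$. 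Hence $\sigma$ carries $\mathfrak{e}(\tilde V)\subseteq\frgl(\Grad^W M)$ onto $\tilde V_{2,0}\subseteq\frgl(M')$, and an element $x\in\tilde V$ has the Lefschetz property on $\Grad^W M$ if and only if $x_{2,0}$ has it on $M'$.

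Next I would match the $\mathfrak{f}$-operators. For any Lefschetz operator equipped with a grading, the companion $f$ completing an $\frsl_2$-triple is uniquely determined, by the Bourbaki lemma already cited in Section 1. Since $\sigma$ is a graded isomorphism intertwining the $e$'s, it must also intertwine the associated $f$'s, and consequently the generating sets of $\frg(\tilde V,\Grad^W M)$ and $\frg(\tilde V_{2,0},M')$ correspond bijectively under $\sigma$. This yields the asserted isomorphism of Lie algebras.

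For the ``in particular'' statement it suffices to realise $\frg(\tilde V_{2,0},M')$, which a priori lives in $\frgl(M')=\frgl(M)$, as a subalgebra of $\frg(V,M)$. First, $\tilde V_{2,0}$ lies inside $\frg(V,M)$: each $x_{2,0}$ is the image of $x\in\tilde V\subseteq\frg(V,M)$ under the spectral projector onto the $2$-eigenspace of $\mathrm{ad}\,h'$, and this projector is a polynomial in $\mathrm{ad}\,h'$ since $\mathrm{ad}\,h'$ is semisimple with finitely many integer eigenvalues on $\frg(V,M)$. Second, for any $v\in\tilde V_{2,0}$ having the Lefschetz property on $M'$, the operator $\mathfrak{f}(v)$ belongs to $\frg(V,M)$ by the argument of Lemma \ref{commute} applied with $h'$ in place of $h$: that proof only requires that $v$ and $h'$ lie in a semisimple subalgebra and that $f$ is uniquely determined by $(v,h')$, both of which hold here. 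The step where care is needed is precisely this last one, confirming that the uniqueness argument pinning down $\mathfrak{f}(v)$ inside $\frg(V,M)$ is valid with the non-standard grading operator $h'$ rather than the ambient $h$.
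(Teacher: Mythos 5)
Your proof is correct and follows essentially the same route as the paper: the graded isomorphism $M'\cong\Grad^WM$ coming from the splitting $W_k=W_{k+1}\oplus M'_k$, the observation that it intertwines $x\in\tilde V$ with $x_{2,0}$ (the commuting square in the paper, with your tail-term check making it explicit), and then Lemma \ref{commute} together with $\tilde V_{2,0},h'\in\frg(V,M)$ for the ``in particular'' part. Your added justifications --- matching the $\mathfrak{f}$'s by uniqueness of the third member of an $\frsl_2$-triple, and extracting $x_{2,0}$ via the $\mathrm{ad}\,h'$-eigenspace decomposition of $\frg(V,M)$ --- are exactly the steps the paper leaves implicit, so no genuine difference in method.
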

\begin{proof}
Let $\pi_k: W_k\raw M'_k$ be the projection. Then $\bigoplus_k \pi_k:Gr^WM\raw M'$ is an isomorphism of graded vector spaces.

Moreover, the isomorphism $\bigoplus_k \pi_k$ is compatible with the map $\tilde{V}\raw \tilde{V}_{2,0}$ given by $x\mapsto x_{2,0}$, i.e. that for any $k\in \bbZ$ the following diagram commutes: 
\begin{center}
\begin{tikzpicture}
	\matrix (m) [matrix of math nodes,row sep=3em,column sep=2.5em,minimum width=2em,text height=1.5ex, text depth=0.25ex]
  {W_{k+2}/W_{k+3} & M'_{k+2}\\
  W_k/W_{k+1} & M'_k\\};
  \path[->]
  (m-1-1) edge node[above]{$\pi_{k+2}$} (m-1-2)
  (m-2-1) edge node[above]{$\pi_k$} (m-2-2)
  (m-2-1) edge node[left]{$x$}(m-1-1)
  (m-2-2) edge node[right]{$x_{2,0}$} (m-1-2)	  
  ;
\end{tikzpicture}
\end{center}
Hence, it follows that $ \frg(\tilde{V},\Grad^WM)\cong\frg(\tilde{V}_{2,0},M')$.

The last statement follows from Lemma \ref{commute}, in fact both $\tilde{V}_{2,0}$ and $h'$ are contained in $\frg(V,M)$, whence $\frg(\tilde{V}_{2,0},M')\cug \frg(V,M)$.
\end{proof}

\section{Application to Soergel Calculus}\label{SoergelCalc}

Let $G$ be a simply-connected complex reductive Lie group, $B$ be a Borel subgroup of $G$ and $T\cug B$ be a maximal torus. We denote by $X=G/B$ its flag variety. 
Let $\frg$ be the Lie algebra of $G$ and $\hgot\cug\frg $ be the Lie algebra of $T$, with dual space $\hgotdc$.
Let $\Phi\cug \hgotdc$ be the root system of $G$ and $\Delta$ be the set of simple roots with respect to $B$. Let $W$ be the Weyl group of $G$ and $S\cug W$ be the set of simple reflections. 
We denote by $(\cdot,\cdot)$ the Killing form on $\hgotdc$. 

Let $\Lambda=\{\lambda\in \hgotdc\mid 2(\lambda,\alpha)/(\alpha,\alpha)\in \bbZ \; \forall \alpha \in \Phi\}$ be the weight lattice and let $\hgotd=\Lambda\otimes_\bbZ\bbR$. 
For any weight $\lambda\in \Lambda$ we define a one-dimensional module $\bbC_\lambda$ of $B$. Then the projection $G\times_B \bbC_\lambda\raw G/B$ is a line bundle $L_\lambda$ on $X$ and the first Chern class $c_1(L_\lambda)$ defines an element in $H^2(X):=H^2(X,\bbR)$. 
The map $\lambda \into c_1(L_\lambda)$ induces a homomorphism $\Lambda\raw H^2(X)$ which can be extended to a graded algebra homomorphism from $R=\text{Sym}(\hgotd)=\bbR[\hgotd]$ to $H(X)$ where $\hgotd$ is regarded as homogeneous polynomials of degree $2$.

This map is surjective, and its kernel is the ideal generated by $R^W_+$, the invariants in positive degree under the Weyl group $W$ of $G$.

Note that in the Hodge decomposition of $X$ only terms of type $(p,p)$ appear. Furthermore we have $NS(X)=H^2(X)=\hgotd=R^2$, since $(R^W_+)^2=0$.

Let $w\in W$ and let $IH_w:=IH(X_w,\bbR)$ be the intersection cohomology of the Schubert variety $X_w=\bar{B\cdot wB}\overset{i}{\hookrightarrow} X$. We regard $IH_w$ in a natural way as a $R$-module via the composition map $R\raw H(X)\xrightarrow{i^*} H(X_w)$.

\begin{remark}\label{natmap}
For any complex variety $Y$, there is a natural map $H(Y)[\dim Y]\raw IH(Y)$. If $Y$ is projective, then the kernel is precisely the non-pure part of $H(Y)$ \cite[Theorem 3.2.1]{dCM2}.
Because Schubert varieties have a cell decomposition, their cohomology is pure. Hence, we have a natural inclusion $H(X_w)[\ell(w)]\hookrightarrow IH_w$ for any $w\in W$.
\end{remark}

The $R$-modules arising as intersection cohomology of Schubert varieties can also be defined purely algebraically. Let $\underline{w}=s_1s_2\ldots s_{\ell}$ be a reduced expression 
for $w\in W$, where $\ell:=\ell(w)$ is the length of $w$ and $s_i\in S\cug W$.
We define the Bott-Samelson module
$\bar{BS}(\underline{w})=R\otimes_{R^{s_1}}R\otimes_{R^{s_2}}\ldots R\otimes_{R^{s_\ell}}R\otimes_R \bbR[\ell]$. Here
 $R^{s_i}$ denotes the $s_i$-invariants in $R$ and $\bbR$ has the $R$-module structure given by $\bbR\cong R/R_{>0}$.

\begin{theorem}[Soergel, \cite{S1}]
We choose any decomposition of  $\bar{BS}(\underline{w})$ into indecomposable $R$-modules and we denote by $\bar{B_w}$ the summand containing $1^\otimes:=1\otimes \ldots \otimes 1$. Then:
\begin{enumerate}[i)]
\item Up to isomorphism, $\bar{B_w}$ does not depend on the choice of decomposition, nor on the choice of the reduced expression $\underline{w}$ of $w$.
\item Any indecomposable summand of $\bar{BS}(\underline{w})$ is isomorphic, up to shift, to a module $\bar{B_{w'}}$ for some $w'\leq w$.
\item $IH_w\cong \bar{B_{w^{-1}}}$ for any $w\in W$.
\end{enumerate}
\end{theorem}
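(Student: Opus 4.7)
The plan is to prove the three parts in order, with parts (i) and (ii) following from Krull--Schmidt together with a careful analysis of the combinatorics of Bott--Samelson modules, and part (iii) coming from the Beilinson--Bernstein--Deligne--Gabber decomposition theorem applied to a Bott--Samelson resolution.

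First I would establish Krull--Schmidt for finitely generated graded $R$-modules. Since $\bbR$ has characteristic zero and $R=\bbR[\hgotd]$ is a polynomial ring graded by even non-negative integers with $R_0=\bbR$, the endomorphism ring of an indecomposable graded $R$-module is local: its unique maximal ideal consists of endomorphisms whose degree-zero component is nilpotent, together with all morphisms of strictly positive degree. This gives uniqueness up to isomorphism and reordering of the decomposition of $\bar{BS}(\underline{w})$ for any fixed reduced expression $\underline{w}$, so the summand $\bar{B_w}$ containing $1^\otimes$ is well defined for that expression.

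For the independence from $\underline{w}$ in (i), I would invoke Matsumoto's theorem: any two reduced expressions for $w$ are related by a sequence of braid moves. For each such move it suffices to exhibit an explicit isomorphism of the corresponding Bott--Samelson modules sending $1^\otimes$ to $1^\otimes$, since by Krull--Schmidt the summand containing $1^\otimes$ is characterised by this property. The rank-two dihedral braid relations reduce to direct computations inside tensor products of $R\otimes_{R^s}R$, carried out using the standard generators. For part (ii), I would induct on $\ell(w)$: writing $\underline{w}=\underline{w}'s_\ell$ with $\ell(w')=\ell-1$, one has $\bar{BS}(\underline{w})\cong\bar{BS}(\underline{w}')\otimes_R R\otimes_{R^{s_\ell}}R[1]$. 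By the inductive hypothesis every summand of $\bar{BS}(\underline{w}')$ is of the form $\bar{B_y}[n]$ for some $y\leq w'$, so it suffices to show that $\bar{B_y}\otimes_R R\otimes_{R^{s_\ell}}R$ decomposes into indecomposable summands of the form $\bar{B_{y'}}[m]$ with $y'\in\{y,ys_\ell\}$. The key input here is Soergel's Hom formula, which identifies the graded rank of $\Hom(\bar{B_x},\bar{B_y})$ with the standard pairing of the corresponding Kazhdan--Lusztig basis elements in the Hecke algebra, and which also implies that $\bar{B_w}$ itself has support in $X_{\leq w}$ and therefore cannot appear in $\bar{BS}(\underline{w}')$ for any shorter $\underline{w}'$.

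Finally, for (iii) I would use the decomposition theorem. A reduced expression $\underline{w}$ for $w$ yields a smooth Bott--Samelson variety $BS(\underline{w})$ together with a proper birational multiplication map $\pi:BS(\underline{w})\to X_w$. The decomposition theorem of Beilinson--Bernstein--Deligne--Gabber gives
$$R\pi_*\underline{\bbR}_{BS(\underline{w})}[\ell(w)]\cong IC(X_w)\oplus\bigoplus_{y<w}IC(X_y)^{\oplus m_y}[n_y],$$
and taking hypercohomology together with the classical isomorphism $H^*(BS(\underline{w}))\cong\bar{BS}(\underline{w})$ produces a decomposition of $\bar{BS}(\underline{w})$ in which $IH(X_w)$ appears as a summand containing the image of the fundamental class of $BS(\underline{w})$. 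By Krull--Schmidt this summand is isomorphic to $\bar{B_w}$; the appearance of $w^{-1}$ in the statement is a convention coming from identifying left versus right orbit parametrisations of Schubert varieties and Soergel modules. The main obstacle, in my view, is the proof of Soergel's Hom formula that underlies step~(ii): it requires a delicate argument via localisation of Soergel modules at torus fixed points (or equivalently moment-graph techniques), and it is the technical heart of the whole result.
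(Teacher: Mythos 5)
This statement is quoted by the paper from Soergel \cite{S1} and is not proved there, so there is no internal argument to compare with; judged on its own terms, your sketch contains two concrete false steps. First, in (i) you propose to prove independence of the reduced expression by exhibiting, for each braid move, an isomorphism of the corresponding Bott--Samelson modules sending $1^\otimes$ to $1^\otimes$. No such isomorphism exists in general: already in type $A_2$, with $S=\{s,t\}$, one has $\bar{BS}(sts)\cong \bar{B_{sts}}\oplus \bar{B_s}$ and $\bar{BS}(tst)\cong \bar{B_{sts}}\oplus \bar{B_t}$, and $\bar{B_s}\cong R/(R\cdot R^s_+)[1]$ is not isomorphic to $\bar{B_t}\cong R/(R\cdot R^t_+)[1]$ as a graded $R$-module (their annihilators already differ in degree $2$). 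Hence $\bar{BS}(sts)\not\cong\bar{BS}(tst)$: Bott--Samelson modules for different reduced expressions of the same element are genuinely non-isomorphic, and only the distinguished summands containing $1^\otimes$ agree. Proving that requires a different mechanism, e.g.\ characterizing $\bar{B_w}$ through the support filtration (it is the unique summand on which the quotient $\Gamma_{\leq w}/\Gamma_{<w}$ is non-zero, with multiplicity one), through Soergel's character/Hom formula, or, over a Weyl group, through the geometric identification in (iii); an isomorphism of the ambient modules cannot do the job.

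Second, in (ii) your intermediate claim that every indecomposable summand of $\bar{B_y}\otimes_R R\otimes_{R^{s_\ell}}R$ is, up to shift, $\bar{B_{y'}}$ with $y'\in\{y,ys_\ell\}$ is false: in the same example $\bar{B_{st}B_s}\cong \bar{B_{sts}}\oplus \bar{B_s}$, and $s\notin\{st,sts\}$. What the induction actually needs (and what is true, cf.\ the decomposition $\bar{B_wB_s}=\bar{B_{ws}}\oplus\bigoplus_{z<ws}\bar{B_z}^{m_z}$ used in Section 2 of the paper) is only that every summand is some $\bar{B_{y'}}[m]$ with $y'\leq ys_\ell$; establishing this, together with the well-definedness in (i), is precisely where the support-filtration and Hom-formula input of \cite{S1}, \cite{S4} enters, and you correctly identify that as the technical heart, but the step as you state it would fail. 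Your outline of (iii) -- Bott--Samelson resolution, the decomposition theorem, $H(BS(\underline{w}))\cong\bar{BS}(\underline{w})$ as $R$-modules, and the observation that the one-dimensional degree $-\ell(w)$ piece forces the $IC(X_w)$ summand to be the one containing $1^\otimes$, with $w^{-1}$ a left/right convention -- is the standard argument and is sound.
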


As Soergel pointed out, the definition of the module $B_w$ can be easily generalized to any Coxeter group $W$ with $\hgotd$ replaced by a reflection faithful representation of $W$ (in the sense of \cite[Definition 1.5]{S4}). 
For a general Coxeter group there are no known varieties such that 
intersection cohomology gives the indecomposable Soergel module. Nevertheless there exists a replacement for the intersection form in this setting. 

The degree $\ell=\ell(w)$ component of $\bar{BS}(\underline{w})$ is one-dimensional and it is spanned by $c_{\text{top}}:=\alpha_{s_1}\otimes\alpha_{s_2}\otimes\ldots\otimes \alpha_{s_\ell}\otimes 1$. 
Here $\alpha_s$ denotes the simple root corresponding with $s\in S$.
We define the intersection form $\phi$ on $\bar{BS}(\underline{w})$ via 
$$\phi(f,g)=(-1)^{\frac{k(k-1)}2}\Trace(fg)\qquad \forall f\in \bar{BS}(\underline{w})^k,\; \forall g\in \bar{BS}(\underline{w})^{-k},\; \forall k\in \bbZ$$
where $fg$ denotes the term-wise multiplication, and $\Trace$ is the functional which returns the coefficient of $c_{\text{top}}$.
The restriction of the intersection form $\phi$ to $\bar{B_w}$ is well-defined up to a positive scalar and it is non-degenerate.

\begin{theorem}[Elias-Williamson \cite{EW1}]
Let $\eta\in \hgotd$ be in the ample cone, i.e $( \eta, \alpha)>0$ for any $\alpha \in \Delta$. 
Then left multiplication by $\eta^r$ induces an isomorphism $\eta^r:\left(\bar{B_w}\right)^{-r}\raw \left(\bar{B_w}\right)^{r}$ for any $r\geq 0$. 

Furthermore, if $\phi$ is the intersection form of $\bar{B_w}$ then we can define a non-degenerate symmetric product $\langle\cdot,\cdot\rangle_\eta$ on $(\bar{B_w})^{-r}$ via 
 $\langle \alpha,\beta\rangle_\eta \cong \phi(\eta^r\alpha,\bar{\beta})$. 
This symmetric product is $(-1)^{\ell(w)(\ell(w)+1)/2}$-definite when restricted to the primitive part $P^{-r}=\Ker\left(\eta^{r+1}|_{(\bar{B_w})^{-r}}\right)$.
\end{theorem}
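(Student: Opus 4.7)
The plan is to prove both conclusions simultaneously by induction on $\ell=\ell(w)$, abbreviating them as (HL) and (HR). The base case $w=e$ is immediate, since $\bar{B_e}\cong\bbR$ is concentrated in a single degree. For the inductive step I would not work directly with $\bar{B_w}$; rather, I would first establish (HL)+(HR) for the full Bott-Samelson module $\bar{BS}(\underline{w})$ equipped with the intersection form $\phi$, and then descend to the summand $\bar{B_w}$.

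The descent is the simpler half: once (HL)+(HR) holds for $\bar{BS}(\underline{w})$, the decomposition $\bar{BS}(\underline{w})\cong\bigoplus \bar{B_{v_i}}[n_i]$ provided by Soergel's theorem is orthogonal for $\langle\cdot,\cdot\rangle_\eta$, by essentially the same argument used in Corollary \ref{cor1}. Restricting a definite symmetric form to a graded direct summand preserves definiteness, so (HR) descends to every indecomposable summand, and in particular to $\bar{B_w}$. Non-degeneracy of the restricted form on primitive parts then forces (HL) for $\bar{B_w}$ by a dimension count.

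For (HL)+(HR) on $\bar{BS}(\underline{w})$ itself I would use a \emph{deformation argument}. Fix a reduced expression $\underline{w}=\underline{w'}s$ and exploit the presentation $\bar{BS}(\underline{w})=\bar{BS}(\underline{w'})\otimes_{R^s}R[1]$. Consider the one-parameter family $\eta_t=t\eta'+\alpha_s$, where $\eta'\in \hgotd$ is a fixed ample class on the $\underline{w'}$-side. In the limit $t\to\infty$, after rescaling one can identify the action of $\eta_t$ with the action of $\eta'$ on $\bar{BS}(\underline{w'})$ tensored with multiplication by $\alpha_s$ on $R/R^s_+R\cong\bbR[\alpha_s]/(\alpha_s^2)$; the latter is the cohomology of $\mathbb{P}^1$, and the standard stability of (HL)+(HR) under tensoring with $H^*(\mathbb{P}^1,\bbR)$ reduces this limit case to the inductive hypothesis for $\bar{BS}(\underline{w'})$. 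One then decreases $t$ toward a value giving a true ample class: the signature of $\langle\cdot,\cdot\rangle_{\eta_t}$ on each primitive piece is locally constant on the open set where (HL) holds, so it can jump only at values $t_0$ where (HL) fails.

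The main obstacle is therefore ruling out such signature jumps, i.e.\ showing that (HL) persists throughout the entire family $\{\eta_t\}$. At a hypothetical critical $t_0$, the weight filtration attached to $\eta_{t_0}$ (Lemma \ref{Weight}) together with Proposition \ref{WeirdGrading} produces a smaller polarized $V$-Lefschetz module on an associated graded space for which (HR) is already known inductively; comparing signatures across $t_0$ via this substructure would contradict the existence of the jump, so no such $t_0$ can occur. Producing the precise sign $(-1)^{\ell(w)(\ell(w)+1)/2}$ in (HR) is a separate bookkeeping step, carried out in the limit $t\to\infty$ where each Bott-Samelson tensor factor $H^*(\mathbb{P}^1,\bbR)$ and the accompanying shift $[1]$ contribute a controlled sign to the intersection form.
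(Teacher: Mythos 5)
You should first note that the paper itself does not prove this statement: it is quoted from Elias--Williamson \cite{EW1}, so there is no internal proof to compare against, and your sketch has to be measured against the argument of \cite{EW1}, whose broad shape (induction on $\ell(w)$, a deformation $\eta_t$ of the Lefschetz operator, constancy of the signature of the primitive forms on the locus where hard Lefschetz holds, descent from the Bott--Samelson module to the summand $\bar{B_w}$ \`a la Corollary \ref{cor1}) your outline does reproduce. The problem is that the two steps you treat as routine are exactly the hard core of that proof, and as written they fail. First, the limit $t\to\infty$: the operator on $\bar{BS}(\underline{w}')\otimes_{R^s}R[1]$ is not a tensor-product operator. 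The tensor product is over $R^s$, not over $\bbR$; although $\bar{BS}(\underline{w}')\otimes_{R^s}R\otimes_R\bbR$ is isomorphic to $\bar{BS}(\underline{w}')\otimes H(\mathbb{P}^1)$ as a graded vector space, the limiting Lefschetz operator does not respect any such splitting, so ``stability of (HL)+(HR) under tensoring with $H^*(\mathbb{P}^1)$'' does not apply. Proving hard Lefschetz for this limit operator is precisely where \cite{EW1} need genuinely new input (Rouquier complexes and control of local intersection forms), and it is carried out jointly with the inductive proof of Soergel's conjecture. Second, your exclusion of signature jumps is not an argument: to run the signature-constancy mechanism you must already know hard Lefschetz for \emph{every} $\eta_t$ in the family, and the appeal to Lemma \ref{Weight} and Proposition \ref{WeirdGrading} produces no contradiction, since those statements presuppose a polarized Lefschetz module, i.e.\ the Hodge--Riemann data you are trying to propagate. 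In \cite{EW1} hard Lefschetz along the family is a separate theorem with its own difficult proof, not a consequence of a no-jump principle.

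There is also a latent circularity in the descent step: it uses that $\dim(\bar{B_w})^{-k}=\dim(\bar{B_w})^{k}$ and that $\bar{BS}(\underline{w})$ decomposes into shifted indecomposables in a way compatible with the form. For a Weyl group these can be taken from geometry, but for a general Coxeter group (the generality in which the theorem is stated and used in the Appendix) these structural facts are part of the same inductive package as (HL) and (HR) themselves --- they are tied to Soergel's conjecture, which \cite{EW1} prove simultaneously --- so they cannot be invoked as known input without specifying the joint induction. As it stands, the proposal is a reasonable road map of the Elias--Williamson strategy, but the limit case of hard Lefschetz and the persistence of hard Lefschetz along the deformation are missing, and these are the substance of the theorem.
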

This means that the N\'eron-Severi Lie algebra can still be defined for Soergel modules as $\frg_{NS}(w):=\frg(\hgotd,\bar{B_w})$.
We can now apply Corollary \ref{cor1} to the polarized $\hgotd$-Lefschetz module $\bar{B_w}$.

\begin{cor}\label{betti}
Let $N$ be a non-zero $R$-submodule of $\bar{B_w}$ such that $\dim N^{-k}=\dim N^k$ for any $k\in \bbZ$. Then $N\cong \bar{B_w}$.
\end{cor}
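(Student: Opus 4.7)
The plan is to deduce the corollary directly from Corollary \ref{cor1} combined with the indecomposability of $\bar{B_w}$ as an $R$-module. The essential observation is that, since $R = \text{Sym}(\hgotd)$ is generated as an $\bbR$-algebra by $\hgotd$, the notions of $R$-submodule and of graded $\hgotd$-stable subspace coincide inside $\bar{B_w}$. In particular, any $\frg_{NS}(w)$-submodule is automatically an $R$-submodule (because $\hgotd \subseteq \frg_{NS}(w)$), and conversely any $R$-submodule is a graded $\hgotd$-submodule.

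With that identification in hand, the first step is to apply Corollary \ref{cor1} with $V = \hgotd$ and $M = \bar{B_w}$, which is legitimate because $\bar{B_w}$ is a polarized $\hgotd$-Lefschetz module by the Elias--Williamson theorem quoted above. The hypothesis $\dim N^{-k} = \dim N^k$ is exactly what Corollary \ref{cor1} requires, so there exists a graded complement $N' \subseteq \bar{B_w}$ such that
\[
\bar{B_w} = N \oplus N'
\]
as $\frg_{NS}(w)$-modules.

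The second step is to promote this decomposition to a decomposition of $R$-modules. By the observation above, both $N$ and $N'$ are $\hgotd$-stable, and hence $R$-submodules. Therefore the displayed equality is in fact a splitting in the category of graded $R$-modules. Now invoke Soergel's theorem: $\bar{B_w}$ is indecomposable as a graded $R$-module. Since $N \neq 0$ by assumption, we must have $N' = 0$, hence $N = \bar{B_w}$ (which in particular yields the claimed isomorphism).

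There is no real obstacle in this argument: everything is prepared by the preceding machinery. The only point that requires a moment's thought is the compatibility between the $R$-module structure and the $\frg_{NS}(w)$-action, but this is immediate from $\hgotd \subseteq \frg_{NS}(w)$ together with the fact that $\hgotd$ generates $R$ as an algebra.
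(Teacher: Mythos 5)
Your argument is correct and is exactly the one the paper intends: Corollary \ref{cor1} applied to the polarized $\hgotd$-Lefschetz module $\bar{B_w}$ produces a $\frg_{NS}(w)$-stable (hence, since $\hgotd\cug\frg_{NS}(w)$ generates $R$, graded $R$-module) complement, and indecomposability of $\bar{B_w}$ as a graded $R$-module forces $N'=0$. Nothing essential differs from the paper's (implicit) proof.
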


If $w\in W$ and $s\in S$ such that $ws>w$, then $\displaystyle \bar{B_wB_s}=\bar{B_{ws}}\oplus \bigoplus_{z< ws} \bar{B_z}^{m_z}$ for some $m_z\in \mathbb{Z}_{\geq0}$, cf. \cite[\S 1.2.3]{EW1}. In particular $\bar{B_wB_s}$ is a polarized $\hgotd$-Lefschetz module.

\begin{cor}
Let $N$ be a $R$-submodule of $\bar{B_wB_s}$ such that $\dim N^{-k}=\dim N^k$ for any $k\in \bbZ$. Then $N$ is a direct summand of $\bar{B_wB_s}$. In particular, if $N$ is indecomposable and $N^{-\ell(ws)}\neq 0$, then $N\cong \bar{B_{ws}}$.
\end{cor}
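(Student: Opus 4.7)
The plan is to apply Corollary \ref{cor1} directly to $\bar{B_wB_s}$, which was just observed to be a polarized $\hgotd$-Lefschetz module. Since $\hgotd \cug R$, any graded $R$-submodule $N$ of $\bar{B_wB_s}$ is automatically a graded $\hgotd$-submodule. Combined with the symmetry hypothesis $\dim N^{-k} = \dim N^k$, Corollary \ref{cor1} produces a $\frg(\hgotd, \bar{B_wB_s})$-stable complement $N'$ with $\bar{B_wB_s} = N \oplus N'$. Because $\mathfrak{e}(\hgotd) \cug \frg(\hgotd, \bar{B_wB_s})$ acts on $\bar{B_wB_s}$ by multiplication by elements of $\hgotd$, and $R = \sym(\hgotd)$ is generated as an algebra by $\hgotd$, stability under $\frg(\hgotd, \bar{B_wB_s})$ forces $R$-stability. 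Hence $N$ is a direct summand of $\bar{B_wB_s}$ as an $R$-module.

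For the second assertion, assume $N$ is indecomposable with $N^{-\ell(ws)} \neq 0$. By the Krull--Schmidt property of the category of Soergel modules in characteristic $0$, the indecomposable $R$-summand $N$ must be isomorphic (up to a grading shift $\bar{B_z}[n]$) to one of the summands appearing in the decomposition $\bar{B_wB_s} = \bar{B_{ws}} \oplus \bigoplus_{z < ws} \bar{B_z}^{m_z}$. The key observation is that $\bar{B_wB_s}^{-\ell(ws)}$ is one-dimensional and entirely contained in $\bar{B_{ws}}$: indeed, $\bar{B_wB_s}$ is a direct summand of the Bott--Samelson $\bar{BS}(\underline{w}s)$, whose minimum degree part is spanned by $1^\otimes$, and by definition $\bar{B_{ws}}$ is the summand of that Bott--Samelson containing $1^\otimes$. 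Consequently every other indecomposable summand $\bar{B_z}[n]$ with $z < ws$ must have vanishing degree $-\ell(ws)$ component, which translates into the bound $n < \ell(ws) - \ell(z)$; this is incompatible with $N^{-\ell(ws)} \neq 0$. Therefore $N$ must match the summand $\bar{B_{ws}}$.

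The first part reduces immediately to Corollary \ref{cor1} and poses no real difficulty. The only mild hurdle is the second part, where one must rule out that shifted copies $\bar{B_z}[n]$ for $z < ws$ could accidentally reach degree $-\ell(ws)$; the $1^\otimes$ argument described above resolves this cleanly, bypassing any appeal to Kazhdan--Lusztig positivity.
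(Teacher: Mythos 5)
Your argument is correct and is exactly the (omitted) argument the paper intends: the first claim is Corollary \ref{cor1} applied to the polarized $\hgotd$-Lefschetz module $\bar{B_wB_s}$, with $R$-stability of the complement coming from $R=\sym(\hgotd)$, and the second follows by Krull--Schmidt from the decomposition $\bar{B_wB_s}=\bar{B_{ws}}\oplus\bigoplus_{z<ws}\bar{B_z}^{m_z}$, since every summand other than $\bar{B_{ws}}$ vanishes in degree $-\ell(ws)$ (as $\ell(z)<\ell(ws)$, or via your one-dimensionality observation for $\bar{BS}(\undw s)^{-\ell(ws)}$). No gaps to report.
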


We now restrict ourselves the case where $W$ is the Weyl group of a simply-connected complex reductive group.
We recall some results from \cite{BGG}. The elements $[X_v]\in H_{2\ell(v)}(X)$, the fundamental classes of the Schubert varieties $X_v$ (for $v\in W$), are a basis of the homology of $X$. 
By taking the dual basis we obtain a basis $Q_v\in H^{2\ell(v)}(X)$, for $v\in W$, of the cohomology, called the \textit{Schubert basis}.

Let $i:X_w\hookrightarrow X$ denote the inclusion. Then $i^*: H(X)\raw H(X_w)=:H_w$ is surjective:
$i^*(Q_v)=0$ if and only if $v\not\leq w$ and the set $\{i^*(Q_v)\}_{v\leq w}$ (which we will denote simply by $Q_v$) is a basis of $H_w$. 

The following result is due to Carrell-Peterson \cite{Ca}:

\begin{cor}\label{corPD}
For any $w \in W$ the following are equivalent:
\begin{enumerate}[i)]
 \item $H_w[\ell(w)]=IH_w$.
 \item $\#\{v\in W\mid v\leq w\text{ and }\ell(v)=k\}=\#\{v\in W\mid v\leq w\text{ and }\ell(v)=\ell(w)-k\}$ for any $k\in \bbZ$.
 \item All the Kazhdan-Lusztig polynomials $p_{v,w}$ are trivial.
\end{enumerate}

\end{cor}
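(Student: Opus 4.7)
The plan is to cycle through the three conditions, with the crucial implication being $(ii)\Rightarrow(i)$, which exploits Corollary \ref{betti}; the other directions are essentially standard facts about $IH$ of Schubert varieties packaged conveniently.

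For $(ii)\Rightarrow(i)$: I will use Remark \ref{natmap}, which gives an injection of $R$-modules
$$N := H_w[\ell(w)] \hookrightarrow IH_w \cong \bar{B_{w^{-1}}}.$$
Because the Schubert basis $\{Q_v\}_{v\leq w}$ of $H_w$ places $Q_v$ in degree $2\ell(v)$, we have $\dim N^i = \#\{v\leq w : 2\ell(v)-\ell(w) = i\}$. Therefore hypothesis (ii), stated for $k=(i+\ell(w))/2$, is equivalent to the palindromicity $\dim N^i = \dim N^{-i}$ for all $i\in\bbZ$. Since $N$ is a non-zero graded $R$-submodule of $\bar{B_{w^{-1}}}$ (note $\ell(w)=\ell(w^{-1})$ and $v\leq w\Leftrightarrow v^{-1}\leq w^{-1}$, so Corollary \ref{betti} applies with $w$ replaced by $w^{-1}$), Corollary \ref{betti} forces $N\cong \bar{B_{w^{-1}}}=IH_w$; comparing dimensions then upgrades the inclusion to an equality.

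For $(i)\Rightarrow(ii)$: I invoke Poincaré duality for intersection cohomology of the irreducible projective variety $X_w$: $\dim IH^k(X_w) = \dim IH^{2\ell(w)-k}(X_w)$. If $H_w[\ell(w)]=IH_w$, this duality translates directly into the Betti number symmetry $\#\{v\leq w:\ell(v)=k\} = \#\{v\leq w:\ell(v)=\ell(w)-k\}$.

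For $(i)\Leftrightarrow(iii)$: This is the standard dictionary between KL polynomials and intersection cohomology of Schubert varieties. Writing the graded dimension of $IH_w$ as $\sum_{v\leq w} p_{v,w}(q^2)\, q^{2\ell(v)}$ and that of $H_w$ as $\sum_{v\leq w} q^{2\ell(v)}$, the inclusion from Remark \ref{natmap} is an isomorphism precisely when the two Poincaré polynomials agree, which happens if and only if every $p_{v,w}$ equals $1$. The main obstacle is really conceptual rather than technical: one must recognize that the injection $H_w[\ell(w)]\hookrightarrow IH_w$ from Remark \ref{natmap} places us exactly in the hypothesis of Corollary \ref{betti}, at which point the Lefschetz/semisimplicity machinery developed in Section 1 delivers the result with no combinatorial input.
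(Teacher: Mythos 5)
Your proof is correct and follows essentially the same route as the paper: the injection of Remark \ref{natmap} combined with Corollary \ref{betti} for (ii)$\Rightarrow$(i), the symmetry of the graded dimensions of $IH_w$ (Poincar\'e duality) for (i)$\Rightarrow$(ii), and the Kazhdan--Lusztig dictionary for (i)$\Leftrightarrow$(iii). The only point you leave implicit is that passing from equality of Poincar\'e polynomials to $p_{v,w}=1$ requires the non-negativity of the KL coefficients together with $p_{v,w}(0)=1$, which the paper makes explicit by evaluating at $0$ and $1$.
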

\begin{proof}
The shifted cohomology $H_w[\ell(w)]$ is a $R$-submodule of the indecomposable $R$-module $\bar{B_{w^{-1}}}=IH_w$ (Remark \ref{natmap}) and 
$$\dim H^{2k}(X_w)=\#\{v\in W\mid v\leq w\text{ and }\ell(v)=k\}.$$

If $\dim H^{2k}(X_w)=\dim H^{2\ell(w)-2k}(X_w)$ for any $0\leq k\leq \ell(w)$, from Corollary \ref{betti} we get that
$H_w[\ell(w)]$ and $IH_w$ must coincide, thus ii) implies i). 
Vice versa, i) implies ii) because $IH_w$ satisfies $\dim IH_w^{-k}=\dim IH_w^{k}$ for any $k \in \mathbb{Z}$.

Because $p_{v,w}(0)=1$ for any $v\leq w$, we have $\dim H_w=\sum_{v\leq w} p_{v,w}(0)$, while $\dim IH_w=\sum_{v\leq w} p_{v,w}(1)$.
Since the KL polynomials have positive coefficients, we have $\dim IH_w=\dim H_w$ if and only if $p_{v,w}(1)=p_{v,w}(0)$ for any $v\in W$, or equivalently if and only if $p_{v,w}(q)=1$ for any $v\leq w$. It follows that i) is equivalent to iii).
\end{proof}

A similar argument works also for a general Coxeter group $W$. We explain in the \hyperref[appendix]{Appendix} how to extend the proof of Corollary \ref{corPD} to that setting.

\section{The N\'eron-Severi algebra of Soergel modules}

In \cite{LL} Looijenga and Lunts determined the N\'eron-Severi Lie algebra $\frg_{NS}(X)$ of a flag variety $X=G/B$ of every simple group $G$: it is the complete algebra of automorphisms $\mathfrak(H,\phi)$ of the intersection form,
i.e. it is a symplectic (resp. orthogonal) algebra if the complex dimension of $X$ is odd (resp. even).

Here we want to extend their results and determine the Lie algebra $\frg_{NS}(w)$ for an arbitrary $w\in W$.
We don't quite succeed, however we show that $\frg_{NS}(w)$ is ``as large as possible'' for many $w$.

\subsection{Basic properties of the Schubert basis}
Let $\{Q_v\}_{v\in W}$ be the Schubert basis of $H(X)$ introduced in Section \ref{SoergelCalc}.
The $R$-module structure of $H(X)$ can be described in the basis $\{Q_v\}_{v\in W}$ by the Chevalley formula \cite[Theorem 3.14]{BGG}:
\begin{equation}\label{cheva}
\lambda\cdot Q_w=2\sum_{w\xrightarrow{\gamma} v}\frac{(w\lambda,\gamma)}{(\gamma,\gamma)}Q_v
\end{equation}
where the notation $w\xrightarrow{\gamma} v$ means $\ell(v)=\ell(w)+1$, $\gamma\in \Phi^+$ and $v=s_\gamma w$, where $s_\gamma\in W$ is the reflection corresponding to $\gamma$.
 
In particular if $s\in S$ then $Q_s\in H^2(X)=\hgotd$ can be identified with the fundamental weight in $\Lambda$ corresponding to $\alpha_s$, i.e. we have
$2(Q_s,\alpha_s)=(\alpha_s,\alpha_s)$ and 
$(Q_s,\alpha_t)=0$ for any $s\neq t\in S$.
The following Lemma is an easy application of the Chevalley formula \eqref{cheva}:

\begin{lemma}\label{conti}
In $H(X)$ we have, for any $s,t\in S$:
\begin{enumerate}[i)]
\item $\displaystyle Q_s^2=-2\sum_{u\in S\setminus\{s\}}\frac{( \alpha_s,\alpha_u)}{(\alpha_u,\alpha_u)} Q_{us}$;

\item $Q_sQ_t=Q_{st}$ if $(\alpha_s,\alpha_t)= 0$;
\item $Q_sQ_t=Q_{st}+Q_{ts}$ if $(\alpha_s,\alpha_t)\neq 0$ and $s\neq t$.
\end{enumerate}
\end{lemma}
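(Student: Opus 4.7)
The plan is to apply the Chevalley formula \eqref{cheva} directly in each case, using throughout that $Q_s \in H^2(X)$ is the fundamental weight $\omega_s$, so that $(Q_s,\alpha_u) = \tfrac{1}{2}(\alpha_s,\alpha_s)\delta_{su}$. The crucial structural input---and the point on which the whole argument hinges---is the following combinatorial description of the elements appearing in the Chevalley sum: for a fixed simple reflection $r \in S$, the elements $v \in W$ of length $2$ that cover $r$ in Bruhat order are exactly those of the form $v = ar$ or $v = ra$ with $a \in S \setminus \{r\}$ (the two coincide precisely when $s_a$ and $s_r$ commute), and for each such cover the unique positive root $\gamma$ with $v = s_\gamma r$ is $\gamma = \alpha_a$ when $v = ar$, and $\gamma = r\alpha_a$ when $v = ra$. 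This reduces \eqref{cheva} with $w = r$ to a finite sum indexed by $a \in S \setminus \{r\}$.

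For (i), I apply \eqref{cheva} with $\lambda = Q_s$ and $w = s$. Using $sQ_s = Q_s - \alpha_s$ together with $(Q_s,\alpha_a) = 0$ for $a \neq s$, the $v = as$ contribution evaluates to $-2(\alpha_s,\alpha_a)/(\alpha_a,\alpha_a)\, Q_{as}$, while the $v = sa$ contribution, which carries $\gamma = s\alpha_a$, reduces by $W$-invariance of the form to $2(Q_s,\alpha_a)/(\alpha_a,\alpha_a) = 0$. Summing over $a \in S \setminus \{s\}$ produces the right-hand side of (i).

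For (ii) and (iii), I apply \eqref{cheva} with $\lambda = Q_s$ and $w = t$, $s \neq t$; here $tQ_s = Q_s$ since $\langle Q_s,\alpha_t^\vee \rangle = 0$. Both the $v = at$ contribution (with $\gamma = \alpha_a$) and the $v = ta$ contribution (with $\gamma = t\alpha_a$) reduce, via $W$-invariance, to scalars proportional to $(Q_s,\alpha_a)$, so only $a = s$ is relevant, producing the terms $Q_{st}$ and $Q_{ts}$ respectively. In case (ii) the relation $st = ts$ collapses these two pairs $(v,\gamma)$ to a single pair and gives $Q_sQ_t = Q_{st}$; in case (iii) the pairs are genuinely distinct and give $Q_sQ_t = Q_{st} + Q_{ts}$. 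The main bookkeeping subtlety, and essentially the only thing to watch, is to avoid double-counting in the commuting case, which is handled automatically once one enumerates distinct pairs $(v,\gamma)$ in \eqref{cheva}.
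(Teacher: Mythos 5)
Your proposal is correct and is exactly the computation the paper has in mind: the paper offers no written proof beyond calling the lemma ``an easy application of the Chevalley formula'' \eqref{cheva}, and your enumeration of the covers $v=ar,\,ra$ of a simple reflection $r$ with roots $\alpha_a$ and $r\alpha_a$, together with $(Q_s,\alpha_u)=\tfrac12(\alpha_s,\alpha_s)\delta_{su}$ and careful use of $w\lambda$ (so $sQ_s=Q_s-\alpha_s$, $tQ_s=Q_s$), yields precisely i)--iii), including the correct treatment of the commuting case.
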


We state here for later reference a preliminary lemma.

\begin{lemma}\label{kilinv}
 If the root system $\Phi$ is irreducible (i.e. if the Dynkin diagram of $G$ is connected) then $(R^W_+)^4\cong \bbR$ and it is spanned by 
 $$\calX=\sum_{s,t\in S}c_{st}Q_sQ_t\qquad \text{ where }\quad c_{st}=\frac{(\alpha_s,\alpha_t)}{(\alpha_s,\alpha_s)(\alpha_t,\alpha_t)}.$$
\end{lemma}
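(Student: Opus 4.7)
My plan is to split the claim into two pieces: first show that $(R^W_+)^4$ is one-dimensional, and then show that $\calX$ is a nonzero element lying in $(R^W_+)^4$.

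For the dimension count, I would invoke the Chevalley--Shephard--Todd theorem: $R^W$ is a polynomial algebra on homogeneous generators whose degrees, in the paper's grading (where $\hgotd$ sits in degree $2$), are $2d_1 \leq \ldots \leq 2d_n$, twice the fundamental degrees of $W$. For any irreducible finite Weyl group one has $d_1=2$ with multiplicity one and $d_i\geq 3$ for $i\geq 2$, so the degree-$4$ component of $R^W$ is one-dimensional, spanned by the essentially unique quadratic $W$-invariant.

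For the identification, the key step is to rewrite $\calX$ in the basis of simple roots. Setting $G_{st}:=(\alpha_s,\alpha_t)$ and $(g^{st}):=G^{-1}$, the defining property $(Q_s,\alpha_t)=\frac{(\alpha_t,\alpha_t)}{2}\delta_{st}$ of the fundamental weights yields the expansion
$$Q_s=\frac{(\alpha_s,\alpha_s)}{2}\sum_{u}g^{su}\alpha_u.$$
Substituting into $\calX=\sum_{s,t}c_{st}\,Q_sQ_t$, using $(\alpha_s,\alpha_s)(\alpha_t,\alpha_t)c_{st}=G_{st}$, and then collapsing the sums twice via $\sum_{s}G_{ts}g^{su}=\delta_{tu}$, one obtains
$$\calX=\frac{1}{4}\sum_{u,v}g^{uv}\alpha_u\alpha_v.$$
The right-hand side is the inverse Gram matrix repackaged as an element of $\mathrm{Sym}^2(\hgotd)$. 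Its $W$-invariance is formal: the isomorphism $\hgotd\raw\hgot$ induced by $(\cdot,\cdot)$ is $W$-equivariant, so the inverse bilinear form on the dual space is $W$-invariant as well. Non-vanishing is automatic since $G^{-1}$ is invertible. Combined with the dimension count, this forces $\calX$ to span $(R^W_+)^4$.

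The only nontrivial step is the linear-algebra bookkeeping passing from $\sum c_{st}Q_sQ_t$ to $\frac14\sum g^{uv}\alpha_u\alpha_v$; I foresee no genuine obstacle beyond carefully tracking the Gram matrix and its inverse.
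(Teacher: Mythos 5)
Your proof is correct, and its heart---recognizing $\calX$ as the Killing form in disguise---is the same as the paper's (cf.\ Remark \ref{nondeg}): where the paper expands an arbitrary $x\in\hgotd$ as $x=\sum_s \tfrac{2(Q_s,x)}{(\alpha_s,\alpha_s)}\alpha_s$ and rewrites $(x,y)=4\sum_{s,t} c_{st}(Q_s,x)(Q_t,y)$, you expand each $Q_s$ through the inverse Gram matrix and arrive at $\calX=\tfrac14\sum_{u,v} g^{uv}\alpha_u\alpha_v$; these are the same computation read in opposite directions, and both yield $W$-invariance and non-vanishing formally. The only genuine divergence is the dimension count. The paper gets $\dim (R^W_+)^4=1$ directly from Schur's lemma: a $W$-invariant element of $\sym^2(\hgotd)$ is the same as a $W$-equivariant map $\hgot_\bbR\raw\hgotd$, and since the reflection representation is (absolutely) irreducible such a map is unique up to scalar---this is exactly where connectedness of the Dynkin diagram enters. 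You instead invoke Chevalley--Shephard--Todd together with the fact that for an irreducible Weyl group the fundamental degree $2$ occurs with multiplicity one. That is correct, but note that this multiplicity statement is itself equivalent to the one-dimensionality of the space of invariant quadratic forms (the multiplicity of the degree $2$ equals the number of irreducible summands of the reflection representation), so you are importing a classification-level fact to obtain what Schur's lemma gives in one line; the trade-off is that your route makes the fundamental degrees explicit, while the paper's is self-contained and displays precisely how irreducibility is used.
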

\begin{proof}
 A $W$-invariant element in $R^4=\sym^2(\hgotd)$ corresponds to a $W$-equivariant morphism $\hgot_\bbR\raw \hgotd$, where $\hgot_\bbR=\{x\in \hgot \mid \lambda(x)\in \bbR \;\forall \lambda \in \hgotd\}$. 
 Since $\hgot_\bbR$ and $\hgotd$ are irreducible as $W$-modules, such a morphism is unique up to a 
 scalar. 
 The Killing form $(\cdot,\cdot)$ is $W$-invariant, hence $\eta\mapsto (\eta,\cdot)$ is a $W$-equivariant isomorphism $\hgotd\raw \hgot_\bbR$. 
 
 For any $x\in \hgotd$ we have $x=\sum_{s\in S}\frac{2(Q_s,x)}{(\alpha_s,\alpha_s)}\alpha_s$. Hence for any $x,y\in \hgotd$ 
 $$(x,y)=\sum_{s,t\in S}\frac{4(\alpha_s,\alpha_t)}{(\alpha_s,\alpha_s)(\alpha_t,\alpha_t)}(Q_s,x)(Q_t,x)=4\sum_{s,t\in S}c_{st}(Q_s,x)(Q_t,y)$$
whence $\sum_{s,t\in S}c_{st}Q_sQ_t\in R^4$ is $W$-invariant.
\end{proof}

\begin{remark}\label{nondeg}
The element $\calX$ is basically (up to a scalar) just the Killing form written in the basis $\{Q_sQ_t\}_{s,t\in S}$ of $\sym^2(\hgotd)$. 
Assume now we have a proper decomposition $\hgotd=\hgotdc_1\oplus \hgotdc_2$. This induces a decomposition $\sym^2(\hgotd)=\sym^2(\hgotdc_1)\oplus (\hgotdc_1\otimes\hgotdc_2)\oplus \sym^2(\hgotdc_2)$.
Since the Killing form is positive definite on $\hgotd$ we deduce that $\calX$ is not contained in $\sym^2(\hgotdc_1)\oplus(\hgotdc_1\otimes \hgotdc_2)$, otherwise the restriction of $\calX$ to $(\hgotdc_1)^\perp\times (\hgotdc_1)^\perp$ would be $0$.
\end{remark}

For a subset $I\cug S$, we denote by $W_I$ the subgroup of $W$ generated by $I$ and $P_I\supseteq B$ the parabolic subgroup corresponding to $I$. Let $\pi: G/B \raw G/P_I$ be the projection.
Then $\pi^*:H(G/P_I)\raw H(G/B)$ is injective. We can also characterize the image of $\pi^*$: it coincides with the set of $W_I$ invariants in $H(X)$, i.e.
$\pi^*(H(G/P_I))=(R/R^W_+)^{W_I}=R^{W_I}/R^W_+\cug H(X)$, and a basis is given by the set 
$$\displaystyle\{Q_v \mid v\in W\text{ has minimal length in its coset in }W/W_I\}.$$

For a simple reflection $u\in S$ let $P_u:=P_{\{u\}}$ be the minimal parabolic subgroup of $G$ containing $u$.
For any element $w\in W$ such that $\ell(wu)<\ell(w)$  we can choose a reduced expression $\underline{w}=st\ldots u$. 
The projection $\pi: G/B\raw G/P_{u}$ is a $\mathbb{P}^1$-fibration which restricts to a $\mathbb{P}^1$-fibration on $X_w$ since $\bar{BwB}\cdot P_{u}= \bar{BwB}$. The image $\pi(X_w)=X_{w}^{u}$ is the parabolic
Schubert variety of the element $w$ in $G/P_{u}$.
The intersection cohomology $IH(X^u_w)$ is a polarized Lefschetz module over $(R^u)^2\cong NS(G/P_u)$, so we can define the Lie algebra $\frg_{NS}(X^u_w):=\frg((R^u)^2,IH(X^u_w))$.

\subsection{A distinguished subalgebra of \texorpdfstring{$\frg_{NS}(w)$}{the N\'eron-Severi Lie algebra of a Schubert Variety}}\label{Subalgebra}

Let $w\in W$ and $u$ be a simple reflection such that $wu<w$. Let $\pi:G/B\raw G/P_u$ be the projection as above. 
We denote by $IC_w$ (resp. $IC_w^u$) the intersection cohomology complex for the variety $X_w$ (resp. $X_w^u$).
Then $R\pi_*(IC_w)\cong IC_{w}^{u}[1] \oplus IC_{w}^{u}[-1]$ (not canonically) by the Decomposition Theorem (the use of the Decomposition Theorem here can be avoided using an argument of Soergel 
\cite[Lemma 3.3.2]{S5}). In particular, as graded vector spaces, we have 
$IH_w\cong IH(X_w^u)\otimes H(\mathbb{P}^1)[1]$.

\begin{lemma}\label{sub1}
The Lie algebra $\frg_{NS}(w)$ contains a Lie subalgebra isomorphic to $\frg_{NS}(X_{w}^{u})$.
\end{lemma}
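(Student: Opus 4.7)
The plan is to apply Proposition \ref{WeirdGrading} with $V=\hgotd$, $M=IH_w$, $\tilde V=V_u:=(\hgotd)^u$ (the hyperplane of $u$-invariants, which realises the N\'eron-Severi group of $G/P_u$), and $e=c$ an ample class on $G/P_u$. The starting observation is that the Decomposition Theorem for $\pi:X_w\raw X_w^u$, as recalled in the paragraph before the statement, takes place in the derived category of sheaves on $G/P_u$, so on hypercohomology it yields a $V_u$-equivariant isomorphism
\[
IH_w\cong N[1]\oplus N[-1],\qquad N:=IH(X_w^u),
\]
with $V_u$ acting diagonally. Choosing $c\in V_u$ with the Lefschetz property on $N$, it follows that $c$ has the Lefschetz property on each summand individually, but not on $IH_w$ as a whole, since the two copies are centred in different degrees.

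Next I would verify the hypothesis $v(W_k)\cug W_{k+2}$ of Proposition \ref{WeirdGrading} for every $v\in V_u$, where $W_\bullet$ denotes the weight filtration of $c$ on $IH_w$. Since $V_u$ is abelian, $v$ commutes with $c$, hence preserves the $\frsl_2$-decomposition of $(IH_w,c)$; in particular $v$ preserves $W_\bullet$. On each summand $N[\pm 1]$ both $v$ and $c$ act as on $N$, and since $v$ has degree $2$ with respect to the $IH_w$-grading while this grading differs from the $N$-grading on each summand by a constant, $v$ raises the $N$-grading, and therefore the $\frsl_2$-weight, by $2$. This is exactly the condition $v(W_k)\cug W_{k+2}$, and it also shows that $\tilde V_{2,0}=V_u$.

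Finally, I would identify $\Grad^WIH_w$, as a graded $V_u$-module, with $N\oplus N$ endowed with its natural $h$-grading and the diagonal $V_u$-action: the $\frsl_2$-Jordan type of $c$ on $IH_w$ is just two copies of its Jordan type on $N$ (insensitive to the degree shifts introduced by the decomposition theorem), and under this identification the weight-filtration grading on $\Grad^WIH_w$ matches the $h$-grading on $N\oplus N$ because $c$ is centred on $N$. Assembling everything,
\[
\frg_{NS}(X_w^u)=\frg(V_u,N)\cong\frg(V_u,N\oplus N)\cong \frg(V_u,\Grad^WIH_w)\hookrightarrow \frg(V,IH_w)=\frg_{NS}(w),
\]
where the first isomorphism is Lemma \ref{double} and the final embedding is the content of Proposition \ref{WeirdGrading}. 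The main subtle point — and really the only computation of substance — is the identification of $\Grad^WIH_w$ with $N\oplus N$ as a graded $V_u$-module; everything else is a direct assembly of the results developed in \S 1.
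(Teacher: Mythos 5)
Your proposal is correct and takes essentially the same route as the paper: the $R^u$-equivariant splitting $IH_w\cong IH(X_w^u)[1]\oplus IH(X_w^u)[-1]$ coming from the decomposition theorem, the description of the weight filtration of $\pi^*\eta$ (degree $\geq k$ in the centred grading of each summand) to verify the hypothesis $v(W_k)\cug W_{k+2}$ of Proposition \ref{WeirdGrading}, the identification $\Grad^W(IH_w)\cong IH(X_w^u)\oplus IH(X_w^u)$ compatibly with $(R^u)^2$, and Lemma \ref{double}. The only loose point is the parenthetical claim $\tilde{V}_{2,0}=V_u$ (the $h'$ provided by Lemma \ref{JM0} need not be the grading operator of the chosen splitting, so a priori $v=v_{2,0}+v_{4,-2}+\cdots$), but this is harmless since the final embedding you invoke is exactly the last assertion of Proposition \ref{WeirdGrading} and does not use it.
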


\begin{proof}
Let $\eta\in H^2(X_w^u)$ be the Chern class of an ample line bundle on $X_w^u$. We can apply Lemma \ref{JM0} to find a $\frsl_2$-triple $\{\pi^*\eta, h',f' \}$ inside $\frg_{NS}(w)$ such that $h'$ is of degree $0$, i.e. $h'(IH_w^k)\cug IH_w^k$ for all $k$.

Any choice of a decomposition $R\pi_*(IC_w)\cong IC_{w}^{u}[1] \oplus IC_{w}^{u}[-1]$ induces a splitting $IH_w=IH(X^u_w)[1]\oplus IH(X^u_w)[-1]$ of $R^u$-modules. 
One can easily check that weight filtration of the nilpotent element $\pi^*\eta$ is $W_k= (IH(X^u_w)[1])^{k-1}\oplus \bigoplus_{n\geq k} IH^n_w$. Therefore for  any $x\in (R^u)^2$ we have $x(W_k)\cug W_{k+2}$.

We can now apply Proposition \ref{WeirdGrading}, with $\tilde{V}=(R^u)^2$, in order to obtain
$$\frg((R^u)^2,\Grad^W(IH_w))\cong \frg(((R^u)^2)_{2,0},IH_w'),$$
 where $IH_w'$ denotes the vector space $IH_w$ with the grading determined by $h'$. In particular 
$\frg((R^u)^2,\Grad^W(IH_w))$ is a subalgebra of $\frg_{NS}(w)$.

It is easy to see that $\Grad^W(IH_w)\cong IH(X^u_w)\oplus IH(X^u_w)$ as graded vector spaces, and the isomorphism is compatible with the action of $R^u$. We conclude using Lemma \ref{double} which implies that $\frg((R^u)^2,\Grad^W(IH_w))\cong \frg_{NS}(X^u_w)$.
\end{proof}

\begin{example}
Let $G=SL_4(\bbC)$ so that  $W=\mathcal{S}_4$ is the symmetric group on $4$ elements, with simple reflections labeled $s_1,s_2,s_3$.
Let $w=s_2s_1s_3s_2$ and $u=s_2$. Let $\eta$ be an ample Chern class on $X_w^u$. Then we can draw the action of $\pi^*\eta$ on  a basis of $IH_w$ and the weight filtration as follows

\begin{center}
\begin{tikzpicture}[auto, baseline=(current  bounding  box.center)]
      \node  (1) at (0,0) {*};
      \node (2) at (-1,1) {*};
		\node (3) at (0,1) {*};      
      \node  (4) at (1,1) {*};
      \node  (5) at (-1,2) {*};
      \node (6) at (0,2) {*};
	\node (7) at (0,3) {*};
	\node (8) at (1,2) {*};
	\node (a) at (3.5,1) {*};
	\node (b) at (2.5,2) {*};
	\node (c) at (3.5,2) {*};
	\node (d) at (4.5,2) {*};
	\node (e) at (2.5,3) {*};
	\node (f) at (3.5,3) {*};
	\node (g) at (4.5,3) {*};
	\node (h) at (3.5,4) {*};
\draw[->] (1) -- (3);
\draw[->] (3) -- (6);
\draw[->] (6) -- (7);
\draw[->] (1) -- (3);
\draw[->] (2) -- (5);\draw[->] (d) -- (g);
\draw[->] (4) -- (8);\draw[->] (b) -- (e);
\draw[->] (a) -- (c);\draw[->] (f) -- (h);
\draw[->] (c) -- (f);
\draw[red, thick] (-2,2.7) to [out=-10,in=200]  (5,4.2);
\draw[red, thick] (-2,1.7) to [out=-10,in=200]  (2.7,2.4) [out=20,in=195] to (5,2.9);
\draw[red, thick] (-2,0.7) to [out=-10,in=200]  (5,2);
\node[red] at (-2,3.2) {$W_3$};
\node[red] at (-2,2.2) {$W_1$};
\node[red] at (-2,1.2) {$W_{-1}$};
\node[red] at (-2,0.2) {$W_{-3}$};
\node at (-4,5) {$\deg$};
\node at (-4,4) {$4$};
\node at (-4,3) {$2$};
\node at (-4,2) {$0$};
\node at (-4,1) {$-2$};
\node at (-4,0) {$-4$};
\draw[blue, thick] (1.5,-0.5) to [out=80,in=-80] (1.5,4.5);
\node[blue] at (0,4) {$IH(X^u_w)[1]$};
   \end{tikzpicture}
\end{center}
\end{example}

We fix $\eta$ and $h'$  as in Lemma \ref{sub1} and let $h''=h-h'$. Then, as in Section \ref{WeightSection}, $h'$ and $h''$ define a bigrading on $IH_w$ and on $\frg_{NS}(w)$.

Notice that the only eigenvalues of $h''$ on $IH_w$ are $1$ and $-1$. It follows that $\frg_{NS}(w)$ decomposes as $\frg_{NS}(w)=\frg_{NS}(w)_{\bullet,-2}\oplus \frg_{NS}(w)_{\bullet,0}\oplus \frg_{NS}(w)_{\bullet,2}$. In particular any element $\rho$ of $R^2$ can be decomposed as $\rho=\rho_{4,-2}+\rho_{2,0}+\rho_{0,2}$.
Moreover, for $\tilde{\eta}\in (R^u)^2$ we have $\tilde{\eta}(W_k)\cug W_{k+2}$, hence $\tilde{\eta}\in \frg_{NS}(w)_{\geq 2,\bullet}$ and $\tilde{\eta}_{0,2}=\tilde{\eta}_{4,-2}+\tilde{\eta}_{2,0}$.

We can now restate and reprove \cite[Proposition 5.6]{LL} in our setting:

\begin{theorem}
The Lie algebra $\frg_{NS}(w)$ contains a Lie subalgebra isomorphic to $\frg_{NS}(X_{w}^{u})\times \mathfrak{sl}_2$. 
\end{theorem}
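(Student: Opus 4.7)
The plan is to enlarge the copy of $\frg_{NS}(X_w^u)$ found in Lemma \ref{sub1} by a commuting $\frsl_2$ coming from the $\mathbb{P}^1$-fiber direction of $\pi: X_w \to X_w^u$. Since that copy lies entirely in the $h''$-centralizer $\frg_{NS}(w)_{\bullet, 0}$, I would construct the $\frsl_2$ in the complementary bidegrees $(0, \pm 2)$ together with the Cartan element $h''$.

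For the raising operator I would take $\rho_{0, 2}$, the $(0, 2)$-bigraded component of $\mathfrak{e}(\rho)$ for some $\rho \in R^2$ ample along the fibers of $\pi$; a natural choice is $\rho = Q_u$, the fundamental weight dual to $\alpha_u$. To see that $\rho_{0, 2}$ centralizes $\frg_{NS}(X_w^u)$ one exploits the commutativity of $R^2$: for any $y \in (R^u)^2$ we have $[y, \rho] = 0$, and since $y = y_{2, 0} + y_{4, -2}$ (the only bidegrees summing to $2$ with $p \geq 2$), the unique $(2, 2)$-bigraded term of $[y, \rho] = 0$ is $[y_{2, 0}, \rho_{0, 2}]$, which therefore vanishes. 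Combined with $[\rho_{0, 2}, h'] = 0$ (immediate from the bidegree), Remark \ref{RealCommute} then yields $[\rho_{0, 2}, \mathfrak{f}(y_{2, 0})] = 0$ for any polarization $y_{2, 0} \in ((R^u)^2)_{2, 0}$; the remark following Proposition \ref{gss} says that such $\mathfrak{f}(y_{2, 0})$ together with $((R^u)^2)_{2, 0}$ generate $\frg_{NS}(X_w^u)$, so $\rho_{0, 2}$ commutes with the entire subalgebra.

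I expect the main obstacle to be verifying that $\rho_{0, 2}$ has the Lefschetz property with respect to $h''$. Since $h''$ takes only the values $\pm 1$ on $IH_w$, this reduces to showing that $\rho_{0, 2}: (IH_w)_{h''=-1} \to (IH_w)_{h''=+1}$ is an isomorphism. Under the splitting $IH_w \cong IH(X_w^u)[1] \oplus IH(X_w^u)[-1]$ provided by the decomposition theorem, the $h''$-eigenspaces are identified with the two summands and $\rho_{0, 2}$ is identified with the ``vertical'' multiplication by $\rho$ along the fibers; the classical hard Lefschetz isomorphism $H^0(\mathbb{P}^1) \xrightarrow{\sim} H^2(\mathbb{P}^1)$ applied fiber-wise then delivers the required isomorphism. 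Given this, Lemma \ref{commute} produces the missing $f'' \in \frg_{NS}(w)$ completing the $\frsl_2$-triple $\{\rho_{0, 2}, h'', f''\}$.

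To finish, the triple $\{\rho_{0, 2}, h'', f''\}$ commutes with $\frg_{NS}(X_w^u)$: with $\rho_{0, 2}$ by the above computation, with $h''$ since $\frg_{NS}(X_w^u) \subset \frg_{NS}(w)_{\bullet, 0}$, and with $f''$ by another application of Remark \ref{RealCommute}. Any element in the intersection $\frg_{NS}(X_w^u) \cap \frsl_2$ must commute with both subalgebras, hence lies in the center of $\frsl_2$, which is zero; the natural map $\frg_{NS}(X_w^u) \times \frsl_2 \to \frg_{NS}(w)$ is therefore an injective Lie algebra homomorphism, yielding the claimed subalgebra.
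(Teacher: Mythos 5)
Your proposal is correct and follows essentially the same route as the paper: take the $(0,2)$-component of a degree-$2$ class, use the relative Hard Lefschetz input to get the Lefschetz property with respect to $h''$, complete the triple inside $\frg_{NS}(w)$ via Lemma \ref{commute}, obtain mutual commutation from the bigraded commutator computation together with Remark \ref{RealCommute}, and conclude injectivity from the triviality of the center of $\frsl_2$. The only (harmless) deviation is that you take the relatively ample class $Q_u$ and phrase the key input as fiber-wise hard Lefschetz for the $\mathbb{P}^1$-fibration, whereas the paper takes $\rho$ ample on $X_w$ and cites \cite[Th\'eor\`eme 5.4.10]{BBD}, which covers both choices.
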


\begin{proof}
Take $\rho$ to be the Chern class of an ample line bundle on $X_w$. Then by the Relative Hard Lefschetz Theorem \cite[Theorem 5.4.10]{BBD} cupping with $\rho$ induces an
isomorphism of $R^u$-modules:
$$IH(X_w^u)[1]\cong{}^p H^{-1}(R\pi_*IC_w)\stackrel{\rho}{\longrightarrow}{}^p H^{1}(R\pi_*IC_w)\cong IH(X_w^u)[-1].$$

This means that the $(0,2)$-component $\rho_{0,2}\in \frg_{NS}(w)_{0,2}$ of $\rho$ (thus we have $[h',\rho_{0,2}]=0$ and $[h'',\rho_{0,2}]=2\rho_{0,2}$)  has the Lefschetz property with respect to the grading given by $h''$. 
In particular, because of Lemma \ref{commute}, we can complete it to an $\frsl_2$-triple $\{\rho_{0,2},h'',f''_\rho\}\cug \frg_{NS}(w)$. The span of $\{\rho_{0,2},h'',f''_\rho\}$ is a subalgebra of $\frg_{NS}(w)_{0,\bullet}$. In fact, since both $\rho_{0,2}$ and $h''$ commute with $h'$ so does $f''_\rho$ (see Remark \ref{RealCommute}).


Recall from Lemma \ref{sub1} that $\frg_{NS}(X^u_w)$ is isomorphic to $\frg(((R^u)^2)_{2,0},IH'_w)$, which in turn is a subalgebra of $\frg_{NS}(w)$.
It remains to show that the two subalgebras $
 \frg(((R^u)^2)_{2,0},IH'_w)$ and $\Span\{\rho_{0,2},h'',f''_\rho\}\cong \frsl_2(\bbR)$ intersect trivially and mutually commute.
Since $\rho$ commutes with $\tilde{\eta}$ for any $\tilde{\eta}\in (R^u)^2$, then also $\rho_{0,2}$ commutes with $\tilde{\eta}_{2,0}$: in fact since $\rho=\rho_{4,-2}+\rho_{2,0}+\rho_{0,2}$ and  $\tilde{\eta}=\tilde{\eta}_{4,-2}+\tilde{\eta}_{2,0}$, we have   $[\rho_{0,2},\tilde{\eta}_{2,0}]=[\rho,\tilde{\eta}]_{2,2}=0$.

Because $(R^u)^2$ and $h'$ commute with $\rho_{0,2}$, so does $\frg((R^u)^2_{2,0},IH'_w)$.
Because $\rho_{0,2}$ and $h''$ commute with $\frg((R^u)^2_{2,0},IH'_w)$, so does $f''_\rho$. We obtain a morphism of Lie algebras
$$\mathfrak{J}: \frg_{NS}(X_w^u)\times \frsl_2(\bbR)\cong \frg((R^u)^2_{2,0},IH'_w)\times\Span\{\rho_{0,2},h'',f''_\rho\} \raw \frg_{NS}(w)$$
given by the multiplication. The kernel of $\mathfrak{J}$ is $\frg_{NS}(X_w^u)\cap \frsl_2(\bbR)$ and it is contained in the center of $\frsl_2(\bbR)$, which is trivial. The thesis now follows.
\end{proof}

\subsection{Irreducibility of the subalgebra and consequences}

The goal of the first part of this section is to show the following:
\begin{prop}\label{irrmod}
$IH(X_w^u)$ is irreducible as a  $\frg_{NS}(X_w^u)$-module.
\end{prop}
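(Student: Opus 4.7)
The plan proceeds in two stages: first reducing the problem to an indecomposability question for $R^u$-modules, and then establishing that indecomposability.

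For the first stage, I would argue as follows. By Proposition \ref{gss}, the Lie algebra $\frg_{NS}(X_w^u)$ is semisimple, so the $\frg_{NS}(X_w^u)$-module $IH(X_w^u)$ is completely reducible. Hence irreducibility is equivalent to indecomposability as a $\frg_{NS}(X_w^u)$-module. Since $\frg_{NS}(X_w^u)$ contains the image of the multiplication action of $(R^u)^2$, and $(R^u)^2$ generates $R^u$ as an $\bbR$-algebra (together with the unit), any $\frg_{NS}(X_w^u)$-submodule of $IH(X_w^u)$ is in particular an $R^u$-submodule. It therefore suffices to prove that $IH(X_w^u)$ is indecomposable as an $R^u$-module.

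For the second stage, the natural strategy is to exploit the relationship between the parabolic setting and the full flag variety setting via the $\mathbb{P}^1$-fibration $\pi: X_w\raw X_w^u$. The decomposition theorem for $\pi$ yields an isomorphism $IH_w\cong IH(X_w^u)\otimes_\bbR H^*(\mathbb{P}^1)[1]$ which is compatible with the $R^u$-action (where $R^u$ acts on the first tensor factor). Now Soergel's theorem identifies $IH_w\cong \bar{B_{w^{-1}}}$ as an indecomposable $R$-module. Given a putative $R^u$-decomposition $IH(X_w^u)=A\oplus B$ into nonzero summands, we obtain a corresponding $R^u$-decomposition of $IH_w$. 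The goal would be to verify that this decomposition is actually preserved by the full action of $R$ on $IH_w$, yielding an $R$-decomposition that contradicts indecomposability of $\bar{B_{w^{-1}}}$.

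The main obstacle lies in this last verification: writing $R=R^u\oplus \alpha_u R^u$ as $R^u$-modules, the full $R$-action on $IH_w$ differs from its $R^u$-action only by multiplication by $\alpha_u$, which is an $R^u$-linear endomorphism of $IH_w$ of degree $2$. One must show that this endomorphism respects the decomposition $A\oplus B$ extended to $IH_w$. Alternatively, one can shortcut this obstacle by appealing to the parabolic (or singular) analogue of Soergel's theorem, which directly asserts that the singular Soergel module corresponding to $IH(X_w^u)$ is indecomposable as a module over $R^u$; this is the viewpoint I would ultimately adopt if the direct transfer argument becomes too delicate to control explicitly.
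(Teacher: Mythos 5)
There is a genuine gap in your first stage. The claim that $(R^u)^2$ generates $R^u$ as an $\bbR$-algebra is false: $R^u$ is a polynomial algebra on $\#S$ generators, namely the $Q_s$ with $s\neq u$ \emph{together with} $\alpha_u^2$, which sits in degree $4$ and is not a polynomial in the degree-$2$ invariants. Consequently, knowing that a subspace of $IH(X_w^u)$ is stable under $(R^u)^2\cug \frg_{NS}(X_w^u)$ does not make it an $R^u$-submodule, and your reduction ``irreducibility follows from $R^u$-indecomposability'' does not go through as justified. What is actually needed is that the image of the action, i.e.\ $H(G/P_u)$, is generated as an algebra by $H^2(G/P_u)$; this is precisely Lemma \ref{lemma34} of the paper, whose proof is a genuine argument using the $W$-invariant element $\calX$ of Lemma \ref{kilinv} and the non-degeneracy of the Killing form (Remark \ref{nondeg}). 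The point is not pedantic: for a general parabolic $P_I$ the cohomology is \emph{not} generated in degree $2$, and the statement of Proposition \ref{irrmod} fails -- e.g.\ $H(Gr(2,4))$ is indecomposable over its own cohomology ring (hence over $R^{W_I}$), yet it is reducible over $\frg_{NS}(Gr(2,4))\cong\frsl_2(\bbR)$. Since your argument as written never uses that $P_u$ is a \emph{minimal} parabolic, it would ``prove'' this false statement as well.

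Your second stage is also incomplete, as you acknowledge. The direct transfer to $IH_w$ does not work: a decomposition $IH(X_w^u)=A\oplus B$ of $R^u$-modules gives a decomposition of $IH_w$ only as an $R^u$-module, and there is no reason multiplication by $\alpha_u$ should preserve it, so indecomposability of $\bar{B_{w^{-1}}}$ over $R$ yields no contradiction. The fallback you mention is essentially correct in spirit, but should be made precise: the paper's route is Ginzburg's version of the Erweiterungssatz, which gives $\End_{H(G/P_u)\text{-Mod}}(IH(X_w^u))\cong \End_{D^b(G/P_u)}(IC(X_w^u))$; since $IC(X_w^u)$ is a simple perverse sheaf, $IH(X_w^u)$ is indecomposable over $H(G/P_u)$ (equivalently over $R^u$, as the action factors through $H(G/P_u)$). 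Combining this with Lemma \ref{lemma34} -- the step missing from your stage one -- completes the proof. So your outline can be repaired, but both of its stages currently lean on statements that are either false as stated or left unproved.
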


We begin with a preparatory lemma:
\begin{lemma}\label{lemma34}
The cohomology $H(G/P_u)$ is generated as an algebra by the first Chern classes, i.e. by $H^2(G/P_u)$.
\end{lemma}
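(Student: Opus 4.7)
The plan is to use the Borel-type identification $H(G/P_u) \cong R^u/R^W_+$, where $R^u$ denotes the subring of $u$-invariants in $R$, and to show that $R^u$ is generated modulo $R^W_+$ by its degree-$2$ component $(R^2)^u = H^2(G/P_u)$.

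First I would reduce to the case where $\Phi$ is irreducible: if $\Phi = \Phi_1 \sqcup \Phi_2$ with $u \in W_{\Phi_1}$, then $G \cong G_1 \times G_2$, $P_u \cong P'_u \times B_2$, and $G/P_u \cong (G_1/P'_u) \times (G_2/B_2)$, so by K\"unneth it suffices to treat each factor (with $H(G_2/B_2)$ generated in degree $2$ classically). Next, decompose $\hgotd = V \oplus \bbR\alpha_u$ into $u$-eigenspaces, where $V = (\hgotd)^u$ is the $(+1)$-eigenspace (of dimension $|S|-1$, equal to $(R^2)^u$) and $\bbR\alpha_u$ is the $(-1)$-eigenspace. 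Then $R^u = \sym(V) \otimes \bbR[\alpha_u^2]$, so as an $\bbR$-algebra $R^u$ is generated by $V$ (in degree $2$) together with the single element $\alpha_u^2$ (in degree $4$); it thus suffices to show $\alpha_u^2 \in V\cdot V + R^W_+$.

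The key step uses Lemma \ref{kilinv}, which gives $(R^W_+)^4 = \bbR\calX$. Decompose $R^4 = \sym^2(V) \oplus (V \otimes \bbR\alpha_u) \oplus \bbR\alpha_u^2$. The involution $u$ acts on these three summands by $+1$, $-1$, $+1$ respectively, so the $u$-invariance of $\calX$ forces its component in $V \otimes \bbR\alpha_u$ to vanish. By Remark \ref{nondeg} applied to the decomposition $\hgotd = V \oplus \bbR\alpha_u$ (taking $\hgotdc_1 = V$), the component of $\calX$ in $\bbR\alpha_u^2$ is nonzero. Writing $\calX = c\,\alpha_u^2 + g$ with $c \neq 0$ and $g \in \sym^2(V)$ then yields $\alpha_u^2 \equiv -c^{-1} g \pmod{R^W_+}$, completing the argument.

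The main obstacle is this last step: one must show that the extra degree-$4$ generator $\alpha_u^2 \in R^u$ becomes redundant modulo the $W$-invariants. This is really a manifestation of the non-degeneracy of the Killing form along $\bbR\alpha_u$, precisely the content captured by Lemma \ref{kilinv} together with Remark \ref{nondeg}.
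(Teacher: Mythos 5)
Your proof is correct and takes essentially the same approach as the paper: both rest on the identification $H(G/P_u)\cong R^u/(R^W_+)$, the fact that $R^u$ is generated by $(R^2)^u$ together with $\alpha_u^2$, and the combination of Lemma \ref{kilinv} with Remark \ref{nondeg}. The only cosmetic difference is that you exhibit $\alpha_u^2$ directly as an element of $\sym^2((R^2)^u)+\bbR\calX$ via the $u$-eigenspace decomposition, whereas the paper deduces surjectivity of $\sym^2((R^2)^u)\raw H^4(G/P_u)$ from a dimension count together with the injectivity statement $\calX\notin\sym^2((R^2)^u)$.
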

\begin{proof}

We can identify $H(G/P_u)$ with $R^u/(R^W_+)$. The set $\{Q_s\}_{s\in S\setminus \{u\}}$ forms a basis of $H^2(G/P_u)=NS(G/P_u)=(R^2)^u$.
It is enough to show that $\sym^2((R^2)^u)\raw H^4(G/P_u)$ is surjective, because all the generators of $H(G/P_u)$ lie in degrees $\leq 4$.


The subalgebra $R^u$ is generated by $Q_s$, with $s\in S\setminus\{u\}$, and $\alpha_u^2$. Therefore $\dim (R^4)^u=\dim \text{Sym}^2((R^2)^u)+1$ and, since $H^4(G/P_u)=(R^4)^u/(\bbR\calX)$, we have $\dim H^4(G/P_u)=\dim \text{Sym}^2((R^2)^u)$. 
So it suffices to show that $\sym^2((R^2)^u)\raw H^4(G/P_u)$ is injective, or in other words that $\Ker(\sym^2((R^2)^u)\raw H^4(G/P_u))=\bbR\calX\cap \sym^2((R^2)^u)=0$, where $\calX\in (R^4)^W$ is the element defined in Lemma \ref{kilinv}.

But since the Killing form is non-degenerate and $(R^2)^u$ is a proper subspace of $R^2$, we have $\calX\not\in \text{Sym}^2((R^2)^u)$ (as explained in Remark \ref{nondeg}).
\end{proof}

\begin{proof}[Proof of Proposition \ref{irrmod}]
Since $\frg_{NS}(X_w^u)$ is semisimple, it is enough to show that $IH(X_w^u)$ is an indecomposable $\frg_{NS}(X_w^u)$-module. 
In particular it is enough to show that it is indecomposable as a $H^2(X_w^u)$-module (here regarded as an abelian Lie subalgebra of $\frg_{NS}(X^u_w)$).

The Erweiterungssatz (in the version proved by Ginzburg \cite{Gi}) states that taking the hypercohomology (as a module over the cohomology of the partial flag variety) is a fully faithful functor on  $IC$ complexes of Schubert varieties. In particular 
for any $w \in W$ we have:
$$\End_{H(G/P_u)\text{-Mod}}(IH(X_w^u))\cong \End_{D^b(G/P_u)}(IC(X_w^u)).$$
This implies, since $IC(X_w^u)$ is a simple perverse sheaf on $G/P_u$, that $IH(X_w^u)$ is an indecomposable $H(G/P_u)$-module. Now Lemma \ref{lemma34} completes the proof.
\end{proof}

\begin{remark}
Proposition \ref{irrmod}  is not true for a general parabolic flag variety. Let $G=SL_4(\bbC)$ so that  $W=\mathcal{S}_4$ is the symmetric group on $4$ elements, with simple reflections labeled $s,t,u$. 
Then $SL_4(\bbC)/P_{\{s,u\}}$ is isomorphic to $Gr(2,4)$, the Grassmannian of $2$-dimensional subspaces in $\bbC^4$.
Since $\dim H^2(Gr(2,4))=1$ we have $\frg_{NS}(Gr(2,4))\cong \frsl_2(\bbR)$, but $\dim H^4(Gr(2,4))=2$ so it cannot be irreducible as a $\frg_{NS}(Gr(2,4))$-module. In fact, $H(Gr(2,4))$ is not generated by $H^2(Gr(2,4))$.
\end{remark}
\begin{prop}\label{3.7}
If $\frg_{NS}^\bbC(w):=\frg_{NS}(w)\otimes\bbC$ is a simple complex Lie algebra, then we have $\frg_{NS}(w)\cong \mathfrak{aut}(IH_w,\phi)$.
\end{prop}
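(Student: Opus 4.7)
The plan is to pass to the complexification, use the irreducibility of $IH_w\otimes\bbC$ as a $\frg_{NS}^\bbC(w)$-module, and then invoke Dynkin's classification of simple irreducible subalgebras of the classical complex Lie algebras to force equality.

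First I would make the inclusion $\frg_{NS}(w)\subseteq\mathfrak{aut}(IH_w,\phi)$ explicit. The generators $\mathfrak{e}(\eta)$ preserve $\phi$ infinitesimally by the construction of the polarized Lefschetz module in \S 1.1, and the adjoint generators $\mathfrak{f}(\eta)$ then lie in the semisimple Lie algebra $\mathfrak{aut}(IH_w,\phi)$ by Lemma \ref{commute}. Complexifying yields a chain $\frg_{NS}^\bbC(w)\subseteq\mathfrak{aut}(IH_w,\phi)\otimes\bbC$, where the right-hand side is $\mathfrak{sp}_n(\bbC)$ or $\mathfrak{so}_n(\bbC)$ (with $n=\dim IH_w$) depending on the parity of $\ell(w)$, and is a classical simple complex Lie algebra acting on its defining representation.

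Next I would transfer the irreducibility. As recalled in the introduction, $\bar{B_w}$ is indecomposable as an $R$-module, and since $\frg_{NS}(w)$ is semisimple (Proposition \ref{gss}) and contains $\hgotd\cong R^2$, it follows that $IH_w$ is an irreducible $\frg_{NS}(w)$-module. Because we are assuming $\frg_{NS}^\bbC(w)$ is simple, a splitting $IH_w^\bbC=V\oplus\bar V$ after complexification would contradict the fact that $R^\bbC\subseteq\frg_{NS}^\bbC(w)$ continues to act indecomposably on $IH_w^\bbC$, so $IH_w^\bbC$ is irreducible as a $\frg_{NS}^\bbC(w)$-module.

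With this setup, $\frg_{NS}^\bbC(w)$ is a simple complex Lie subalgebra of a classical simple complex Lie algebra, acting irreducibly on its defining representation. Dynkin's classification of such inclusions reduces the question to checking an explicit finite list of simple primitive irreducible embeddings into $\mathfrak{sp}$ and $\mathfrak{so}$. The main obstacle, and the substance of the argument, is ruling out each item on that list. The key additional input is that $\frg_{NS}^\bbC(w)$ is generated by Lefschetz data: for every $\eta$ in the ample cone, it contains $\mathfrak{e}(\eta)$, whose Jordan type on $IH_w^\bbC$ is prescribed by the graded Betti numbers of $IH_w$, together with an $\frsl_2$-triple $\{\mathfrak{e}(\eta),h,\mathfrak{f}(\eta)\}$ implementing Hard Lefschetz. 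In each of the exceptional Dynkin embeddings, the Jordan types achievable by nilpotent elements of the subalgebra are strictly more restricted than this Lefschetz type, so the numerical invariants of $\mathfrak{e}(\eta)$ cannot be realized in any proper embedding. This incompatibility forces $\frg_{NS}^\bbC(w)=\mathfrak{aut}(IH_w,\phi)\otimes\bbC$, and descending back to the real form gives $\frg_{NS}(w)\cong\mathfrak{aut}(IH_w,\phi)$.
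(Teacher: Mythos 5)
Your reduction to Dynkin is set up in a way the paper does not use, and the step that carries all the weight is missing. After you complexify and observe that $\frg_{NS}^\bbC(w)$ is a simple subalgebra of $\mathfrak{aut}(IH_w,\phi)\otimes\bbC$ acting irreducibly, you still have to exclude every proper simple irreducible subalgebra of $\mathfrak{so}_N$ or $\mathfrak{sp}_N$, and your proposal does this only by asserting that ``the Jordan types achievable by nilpotent elements of the subalgebra are strictly more restricted than this Lefschetz type.'' That claim is neither proved nor plausible as stated: Dynkin's irreducible subalgebras of the classical algebras are not a finite list to check (they include infinite families such as the principal $\frsl_2$, adjoint and symmetric/exterior power embeddings, $G_2\cug\mathfrak{so}_7$, etc.), and many of these proper subalgebras do contain nilpotents with a single Jordan block or with ``Lefschetz-looking'' Jordan types, so the numerical invariants of $\mathfrak{e}(\eta)$ alone do not force maximality. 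Ruling them out would require knowing precisely which graded Betti sequences of $IH_w$ can occur and comparing them with the nilpotent orbits of each candidate subalgebra in each candidate representation --- genuine work that your proposal defers entirely to an unproven sentence. (Your first two steps are essentially fine: $\frg_{NS}(w)\cug\mathfrak{aut}(IH_w,\phi)$ and irreducibility of $IH_w$, with a small amount of care needed to see that irreducibility survives complexification.)

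The paper's route avoids this difficulty by feeding Dynkin a different pair. Using Lemma \ref{sub1} and the theorem following it, $\frg_{NS}(w)$ contains a subalgebra isomorphic to $\frg_{NS}(X_w^u)\times\frsl_2$, acting on $IH_w\cong IH(X_w^u)\otimes H(\mathbb{P}^1)$ with the $\frsl_2$ factor of highest weight $1$; Proposition \ref{irrmod} (proved via the Erweiterungssatz and the fact that $H(G/P_u)$ is generated in degree $2$) shows this subalgebra already acts irreducibly. Dynkin's Theorem 2.3 in \cite{Dy}, applied to the pair consisting of this subalgebra and the simple algebra $\frg_{NS}^\bbC(w)$, then says directly that a simple Lie algebra containing an irreducibly acting subalgebra with such an $\frsl_2$ tensor factor must be all of $\frsl_N$, $\mathfrak{so}_N$ or $\mathfrak{sp}_N$; the case $\frsl_N$ is excluded because $\frg_{NS}(w)$ preserves $\phi$, giving $\frg_{NS}(w)\cong\mathfrak{aut}(IH_w,\phi)$ with no case-by-case analysis. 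To repair your argument you would either have to import this tensor-decomposition input or actually carry out the exclusion of Dynkin's irreducible embeddings, which is exactly the part your proposal leaves blank.
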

In particular this implies that the complexification $\frg_{NS}^\bbC(w)$ is isomorphic to $\mathfrak{sp}_{IH_w}(\bbC)$ if $\ell(w)$ is odd, and is isomorphic to $\mathfrak{so}_{IH_w}(\bbC)$ if $\ell(w)$ is even.
\begin{proof}
Proposition \ref{irrmod} shows that the Lie algebra $\frg_{NS}(X_{w}^{u})\times \mathfrak{sl}_2(\bbR)$ acts irreducibly on 
$IH_w\cong IH(X_w^u)\otimes H(\mathbb{P}^1)$.
This obviously remains true when one considers, after complexification, the action of $\frg_{NS}^\bbC(X_{w}^{u})\times \frsl_2(\bbC)$ on $IH(X_w,\bbC)$.

In \cite[Theorem 2.3]{Dy}, Dynkin classified all the pairs 
$\frg\cug \frg'$ ($\cug \frgl(\bbC^N)$) of complex Lie algebras such that $\frg$ acts irreducibly on $V$ and $\frg'$ is simple. From this classification we see that if  
$\frg=\widetilde{\frg}\times \frsl_2(\bbC)$ and $\frsl_2(\bbC)$ acts with highest weight $1$ then $\frg'$ is one of $\frsl_N$, $\mathfrak{so}_N$ and  $\mathfrak{sp}_N$.

We apply now this result to the pair $\frg_{NS}^\bbC(X^u_w)\times \frsl_2(\bbC)\cug \frg_{NS}^\bbC(w)$ 
.
 Clearly we cannot have $\frg_{NS}^\bbC(w)\cong \frsl(IH(X_w,\bbC))$ since $\frg_{NS}(w)\cug \mathfrak{aut}(IH(X_w,\bbC), \phi)$. 
This implies $\frg_{NS}^\bbC(w)=\mathfrak{aut}(IH(X_w,\bbC), \phi)$, hence $\frg_{NS}(w)\cong \mathfrak{aut}(IH_w,\phi)$.
\end{proof}

\begin{remark}
We now discuss  which real forms of the symplectic and orthogonal groups occur as $\mathfrak{aut}(IH_w,\phi)$.
If $\ell(w)$ is odd there is, up to isomorphism, only one symplectic form on $IH_w$, hence $\mathfrak{aut}(IH_w,\phi)\cong\mathfrak{sp}_{\dim(IH_w)}(\bbR)$.

Now we assume that $\ell(w)$ is even. We want to determine the signature of the symmetric form $\phi$ on $IH_w$.

If $k> 0$ then $\phi$ is a perfect pairing between $IH_w^k$ and $IH_w^{-k}$, hence the signature of $\phi|_{IH_w^k\oplus IH_w^{-k}}$ is $(\dim IH_w^k,\dim IH_w^k)$. 
The signature of $\phi$ on $IH_w^0$ is determined by the Hodge-Riemann bilinear relations: the dimension of the positive part of $\phi|_{IH_w^0}$ is given by 
$$\sum_{i=0}^{ \lfloor l(w)/4\rfloor }\dim P^{-\ell(w)+4i}=\sum_{i=0}^{ \lfloor l(w)/4\rfloor}\left(\dim IH_w^{\ell(w)-4i}-\dim IH_w^{\ell(w)-4i+2}\right).$$

\end{remark}

\section{Tensor decomposition of intersection cohomology}

We now want to understand for which $w \in W$ the Lie algebra  $\frg_{NS}^\bbC(w)$ is not simple. The complex Lie algebra $\frg_{NS}^\bbC(w)$ acts naturally on $IH(X_w,\bbC)$. 
To simplify the notation from now on, we will consider in this section only cohomology with complex coefficients and we will denote $IH(X_w,\bbC)$ (resp. $H(X_w,\bbC)$) simply by $IH_w$ (resp. $H_w$) and $R\otimes \bbC\cong\bbC[\hgotdc]$ by $R$.

For any $w\in W$ we have $H_w\cug IH_w$ (see Remark \ref{natmap}). In particular  $H^2_w$ acts faithfully on $IH_w$ and we can regard $H^2_w$ as a subspace of $\frg_{NS}(w)$. 
We recall the following lemma from \cite[Lemma 1.2]{LL}:

\begin{lemma}
Assume there exists a non-trivial decomposition $\frg^\bbC_{NS}(w)=\frg_1\times \frg_2$ and consider $\pi_i:\frg_{NS}^\bbC(w)\raw \frg_i$ the projections. 
Then the decomposition is graded and it also induces a decomposition into graded vector spaces $IH_w=IH_w^{\bullet,0}\otimes_\bbC IH_w^{0,\bullet}$ where $IH_w^{\bullet,0}$ (resp. $IH_w^{0,\bullet}$)  is an irreducible 
$\pi_1(H^2_w)$-Lefschetz module  (resp. $\pi_2(H^2_w)$-Lefschetz module) with $\frg_1=\frg(\pi_1(H^2_w),IH_w^{\bullet,0})$  and $\frg_2=\frg(\pi_2(H^2_w),IH_w^{0,\bullet})$.
\end{lemma}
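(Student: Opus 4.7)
The plan is to combine semisimplicity of $\frg_{NS}^\bbC(w)$ with the irreducibility of $IH_w$ as a module over it, and then to pass the grading and the Lefschetz structure through the resulting tensor factorization. First I would verify that the splitting is graded. For any Lefschetz class $\rho \in H^2_w$ we have $h=[\rho,f_\rho]\in \frg_{NS}^\bbC(w)$, so we can write $h=h_1+h_2$ with $h_i\in \frg_i$. Since $[h_1,h_2]=0$, each $h_i$ is semisimple and $\text{ad}(h_i)$ annihilates $\frg_{3-i}$. Hence for any homogeneous $x=x_1+x_2$ of degree $k$ in $\frg_{NS}^\bbC(w)$, the identity $kx=[h,x]$ splits as $kx_i=[h_i,x_i]$, showing that $\frg_1$ and $\frg_2$ are graded subalgebras.

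Next, by Proposition \ref{gss} the algebra $\frg_{NS}^\bbC(w)$ is semisimple, and as explained in the introduction $\bar{B_w}=IH_w$ is indecomposable as an $R$-module, hence as a $\frg_{NS}^\bbC(w)$-module (since $R$ acts through the subalgebra generated by $H^2_w\cug \frg_{NS}^\bbC(w)$). Weyl complete reducibility then forces $IH_w$ to be irreducible over $\frg_{NS}^\bbC(w)$. The standard decomposition theorem for irreducible representations of a product of semisimple Lie algebras (select a $\frg_1$-isotypic summand and observe that $\frg_2$ acts on its multiplicity space) yields $IH_w\cong V_1\otimes_\bbC V_2$ as $\frg_1\times\frg_2$-modules with each $V_i$ irreducible over $\frg_i$. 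Grading $V_i$ by the (integral) eigenvalues of $h_i$ and setting $IH_w^{\bullet,0}:=V_1$, $IH_w^{0,\bullet}:=V_2$, the identity $h=h_1\otimes \Iden+\Iden\otimes h_2$ recovers the original grading of $IH_w$.

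Finally, to obtain the Lefschetz structure and the description of $\frg_i$, I would project $\frsl_2$-triples. For any Lefschetz $\rho\in H^2_w$, the triple $\{\rho,h,f_\rho\}$ projects to an $\frsl_2$-triple $\{\pi_i(\rho),h_i,\pi_i(f_\rho)\}$ inside each $\frg_i$. Because $h_i$ induces the grading on $V_i$, elementary $\frsl_2$ representation theory shows that $\pi_i(\rho)$ acts with the Lefschetz property on $V_i$ and that its unique adjoint is $\pi_i(f_\rho)$; thus $V_i$ is a $\pi_i(H^2_w)$-Lefschetz module and $\pi_i(f_\rho)\in \frg(\pi_i(H^2_w),V_i)$. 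Since $\frg_{NS}^\bbC(w)$ is generated by $H^2_w$ together with the adjoints $f_\rho$ over all Lefschetz $\rho$, applying $\pi_i$ gives $\frg_i \cug \frg(\pi_i(H^2_w),V_i)$; the reverse inclusion is built into the definition, so the two coincide, and irreducibility of $V_i$ over $\frg_i$ was already established.

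The main obstacle is the bookkeeping in the second step: one must verify that the abstract tensor factorization of the irreducible $\frg_{NS}^\bbC(w)$-module $IH_w$ is simultaneously compatible with the grading, the $R$-module structure, and the intersection form. Once this is carefully in place, the $\frsl_2$-theoretic arguments of the last step go through with no further effort.
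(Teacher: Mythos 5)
The paper gives no proof of this lemma at all: it is simply recalled from \cite[Lemma 1.2]{LL}. Your argument is therefore a self-contained substitute, and it follows the standard route (essentially the one in Looijenga--Lunts): $\frg_1,\frg_2$ are ideals, hence $\operatorname{ad}(h)$-stable, so the splitting is graded; irreducibility of $IH_w$ over the semisimple algebra $\frg_{NS}^\bbC(w)$ (coming from indecomposability over $R$, as the paper asserts in the introduction) gives the outer tensor factorization $IH_w\cong V_1\otimes_\bbC V_2$; and projecting $\frsl_2$-triples transfers the Lefschetz structure to each factor. This is sound and is, in substance, the proof the paper delegates to the reference.

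Two steps need more care than you give them. First, the inclusion $\frg(\pi_i(H^2_w),V_i)\cug\frg_i$ is \emph{not} ``built into the definition'': that algebra is generated by $\pi_i(H^2_w)$ together with the adjoints of \emph{all} $v\in\pi_i(H^2_w)$ having the Lefschetz property on $V_i$, and such a $v$ need not be the projection of a class that is Lefschetz on $IH_w$, so you cannot identify its adjoint with some $\pi_i(f_\rho)$. The correct one-line fix is Lemma \ref{commute}: $\frg_i$ acts faithfully on $V_i$ (an operator of the form $x\otimes\Iden$ vanishing on $IH_w$ is zero), it is semisimple, and it contains both $h_i$ and $v$, hence also the adjoint of $v$. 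Second, you tacitly assume $\pi_i(\rho)\neq 0$ for some Lefschetz $\rho$, which you need both to have a genuine $\frsl_2$-triple (whence integrality of the $h_i$-eigenvalues) and for $V_i$ to be a Lefschetz module at all; this follows from nontriviality of the splitting: if $\pi_1(\rho)=0$ for every Lefschetz $\rho$, then $\pi_1(H^2_w)=0$ (the Lefschetz locus is Zariski dense in $H^2_w$) and $h\in\frg_2$, so Lemma \ref{commute} puts every $f_\rho$, hence all of $\frg_{NS}^\bbC(w)$, inside $\frg_2$, a contradiction. A smaller point: irreducibility over $\bbC$ uses indecomposability of the \emph{complexified} module, not just of the real Soergel module; this holds (the endomorphism ring is local with residue field $\bbR$, so no idempotents appear after extension of scalars) and the paper takes it for granted, but it deserves a word. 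The ``bookkeeping'' you worry about at the end is actually automatic: compatibility with the grading and the $R$-action is exactly what $h=h_1\otimes\Iden+\Iden\otimes h_2$ and $\rho=\pi_1(\rho)+\pi_2(\rho)$ express, and the intersection form plays no role in the statement.
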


For the rest of this paper we assume that we have a splitting $\frg^\bbC_{NS}(w)=\frg_1\times \frg_2$ and we denote by $\pi_1:\frg_{NS}^\bbC(w)\raw \frg_1$  and $\pi_2:\frg_{NS}^\bbC(w)\raw \frg_2$ the projections.
Let $IH_w=IH_w^{\bullet,0}\otimes_\bbC IH_w^{0,\bullet}$ be the induced decomposition.

There exist integers $a,b\geq 0$ such that $IH_w^{\bullet,0}$ (resp. $IH_w^{0,\bullet}$) are not trivial only in degrees between $-a$ and $a$ (resp. between $-b$ and $b$) with $a,b\geq 0$ and $a+b=\ell(w)$. 
In particular $IH_w^{-a,0}$ and $IH_w^{0,-b}$ are one dimensional.
We define a bigrading on $IH_w$ by $IH_w^{i,j}:=IH^{i,0}_w\otimes IH_w^{0,j}$.


\subsection{Splitting of \texorpdfstring{$H^2_w$}{H\textasciicircum 2\_w}}

We can assume from now on $H^2_w=H^2(G/B)$. In fact, we can replace $G$ by its Levi subgroup corresponding to the smallest parabolic subgroup of $G$ containing $w$. 
This does not change the Schubert variety $X_w$, the cohomology $H_w$ and the Lie algebra $\frg_{NS}(w)$. In particular we have $R=\sym(H^2_w)$.

In general $H_w\neq IH_w$,  so it is not clear a priori that a tensor decomposition for $IH_w$ descends to one for $H_w$. 
Still, this holds in our setting:

\begin{prop}\label{split}
Assume we have a decomposition $\frg_{NS}^\bbC(w)=\frg_1\times \frg_2$.  Then $H^2_w=\pi_1(H^2_w)\oplus \pi_2(H^2_w)$.
\end{prop}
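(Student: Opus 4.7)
The plan is to exploit the vanishing of $R^W_+$ on $IH_w$, together with the non-degeneracy of the Killing form (encoded in Remark~\ref{nondeg}), to force the splitting of $H^2_w$.

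First I would argue that $R^W_+$ annihilates $IH_w$. Since $IH_w\cong \bar B_{w^{-1}}$ arises from a Soergel bimodule $B_{w^{-1}}$ by killing the right $R_+$-action, and since in any Soergel bimodule the left and right actions of $R^W$ coincide, the left $R^W_+$-action on $\bar B_{w^{-1}}$ must be zero. Assuming (after handling each irreducible component separately) that the root system $\Phi$ is irreducible, Lemma~\ref{kilinv} produces the distinguished element $\calX=\sum_{s,t\in S}c_{st}Q_sQ_t\in(R^W_+)^4$ whose multiplication action on $IH_w$ is therefore zero.

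Next I would use the bigrading on $IH_w$ induced by the tensor decomposition $IH_w = IH_w^{\bullet,0}\otimes IH_w^{0,\bullet}$. For any $\rho\in H^2_w$, regarded via multiplication as an element of $\frg_{NS}^\bbC(w)$, the summand $\pi_i(\rho)\in\frg_i$ acts only on the $i$-th tensor factor, giving bigrade $(2,0)$ for $i=1$ and $(0,2)$ for $i=2$. Set $Q_s^i := \pi_i(Q_s)$. The bigrade-$(2,2)$ component of $\calX=0$ then simplifies, using symmetry of $c_{st}$ and the fact that $Q_s^1$ commutes with $Q_t^2$, to $\sum_{s,t}c_{st}Q_s^1Q_t^2=0$ as operator on $IH_w$. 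Since $Q_s^1$ and $Q_t^2$ act on disjoint tensor factors, this identity is equivalent to the vanishing $(\pi_1\otimes\pi_2)(\calX)=0$ in $\pi_1(H^2_w)\otimes\pi_2(H^2_w)$.

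Finally, set $K_i := \ker(\pi_i|_{H^2_w})$. The vanishing of $(\pi_1\otimes\pi_2)(\calX)$ places $\calX$ in $K_1\otimes H^2_w + H^2_w\otimes K_2$, and a short bookkeeping using the symmetry of $\calX$ refines this to $\calX\in\sym^2(K_1)\oplus\sym^2(K_2)\subseteq\sym^2(K_1\oplus K_2)$. If $U$ were a non-trivial complement of $K_1\oplus K_2$ in $H^2_w$, Remark~\ref{nondeg} applied to $H^2_w = (K_1\oplus K_2)\oplus U$ would demand a non-zero $\sym^2(U)$ component of $\calX$, a contradiction. Hence $H^2_w = K_1\oplus K_2$, and since $\pi_1$ identifies $K_2$ with $\pi_1(H^2_w)$ and $\pi_2$ identifies $K_1$ with $\pi_2(H^2_w)$, the desired decomposition $H^2_w = \pi_1(H^2_w)\oplus \pi_2(H^2_w)$ follows.

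The main obstacle is the very first step: identifying a $W$-invariant acting as zero on $IH_w$, which relies on the Soergel-bimodule fact that $R^W$ acts centrally. Once $\calX$ is in hand and the bigrading is set up, the remainder is essentially linear algebra driven by Remark~\ref{nondeg}.
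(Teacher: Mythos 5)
Your proposal is correct and, at its core, follows the same route as the paper: both proofs rest on the fact that $\calX=\sum_{s,t}c_{st}Q_sQ_t$ acts as zero on $IH_w$ (you get this from the agreement of the left and right $R^W$-actions on Soergel bimodules, the paper from the $R$-module structure factoring through $H(X)=R/(R^W_+)$ --- both are fine), and on extracting the $(2,2)$-component to obtain $\sum_{s,t}c_{st}\,\pi_1(Q_s)\otimes\pi_2(Q_t)=0$ in $\pi_1(H^2_w)\otimes\pi_2(H^2_w)$; the paper does this via the auxiliary algebra $T=\sym(\pi_1(H^2_w))\otimes\sym(\pi_2(H^2_w))$ and faithfulness of $T^{2,2}$, which is the same $\End(A\otimes B)\cong\End(A)\otimes\End(B)$ fact you invoke. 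Where you genuinely diverge is the endgame: the paper picks a subset $S_L$ so that $\{\pi_1(Q_s)\}_{s\in S_L}$ is a basis of $\pi_1(H^2_w)$ and uses non-degeneracy of the matrix $(c_{st})$ to produce $\#S_L$ independent linear relations on the $\pi_2(Q_t)$, hence $\dim\pi_1(H^2_w)+\dim\pi_2(H^2_w)\leq\dim H^2_w$; you instead observe that a symmetric tensor killed by $\pi_1\otimes\pi_2$ must lie in $\sym^2(K_1)\oplus\sym^2(K_2)$ (your block bookkeeping is correct, using $K_1\cap K_2=0$, which is immediate from the faithfulness of $H^2_w$ on $IH_w$) and then exclude $K_1\oplus K_2\subsetneq H^2_w$ by non-degeneracy of $\calX$. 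This is an equivalent, basis-free packaging of the same dimension count, arguably cleaner. Two small caveats: the aside about reducing to an irreducible root system is both unjustified as stated and unnecessary --- all you use is the $W$-invariance of $\calX$ and the non-degeneracy of $(c_{st})$, which hold for any root system; and since $K_1\oplus K_2$ is only a complex subspace, you should invoke Remark \ref{nondeg} only in the weak form you actually need, namely that the non-degenerate element $\calX$ cannot lie in $\sym^2$ of a proper subspace (this follows from non-degeneracy of the complexified Killing form), whereas the remark's stronger claim about the $\hgotdc_1\otimes\hgotdc_2$-component uses positive definiteness of the real form and can fail for complex subspaces, e.g. $Q_1^2+Q_2^2=(Q_1+iQ_2)(Q_1-iQ_2)$.
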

\begin{proof}
It is enough to show that $\dim H^2_w\geq \dim \pi_1(H^2_w) +\dim \pi_2(H^2_w)$. 
We define 
$$T:=\text{Sym}(\pi_1(H^2_w))\otimes \text{Sym}(\pi_2(H^2_w))\cong\text{Sym}(\pi_1(H^2_w)\oplus \pi_2(H^2_w)).$$ 
We can define a $T$-module structure on $IH_{w}$ via $(x\otimes y)(a)=x(a)\otimes y(a)$ for any $x\in \pi_1(H_w^2)$, $y\in \pi_2(H_w^2)$ and $a\in IH_w$.

We have a bigrading  $T^{p,q}:=\text{Sym}^p(\pi_1(H^2_w))\otimes \text{Sym}^q(\pi_2(H^2_w))$ on $T$ compatible with the bigrading of
$IH_w$, i.e. $T^{p,q}(IH_w^{i,j})\cug IH_w^{p+i,q+j}$.

The subspace $T^{2,0}\cong\pi_1(H_w^2)\cug \frg_1$ acts faithfully on $IH_w^{\bullet,0}$, while $T^{0,2}\cong\pi_2(H_w^2)\cug \frg_2$ acts faithfully on $IH_w^{0,\bullet}$. 
Hence $T^{2,2}\cug \frg_1\otimes \frg_2\cug \frgl (IH_w^{\bullet,0})\otimes  \frgl (IH_w^{0,\bullet})=\frgl(IH_w)$ acts faithfully on $IH_w$, i.e. if $t\in T^{2,2}$ acts as $0$ on $IH_w$, then $t=0$. 

Let $\Psi:R\hookrightarrow T$ the inclusion induced by $\Psi(x)= \pi_1(x)+\pi_2(x)$ for any $x\in R^2$. We observe that the $T$-module structure on $IH_w$ extends the $R$-module structure.

We can decompose $Q_s=L_s+R_s$ where $L_s=\pi_1(Q_s)\in \frg_1$ and $R_s=\pi_2(Q_s)\in \frg_2$ for all $s\in S$.
Now we consider the element $\calX\in (R^4)^W$ defined in Lemma \ref{kilinv}. The $R$-module structure on $IH_w$ factorizes through $H(X,\bbC)=R/(R_+^W)$, therefore  $\Psi(\calX)\in T$ acts as $0$ on $IH_w$. 
In particular also the component $\Psi(\calX)^{2,2}\in T^{2,2}$ acts as $0$ on $IH_w$. Since the action is faithful on $T^{2,2}$ we obtain
$\Psi(\calX)^{2,2}=\sum_{s,t\in S}c_{st}(L_s\otimes R_t+L_t\otimes R_s)=0\in T^{2,2}$. Since $c_{st}$ is symmetric we can rewrite it as follows:

$$\sum_{s,t\in S}L_s\otimes c_{st}R_t=0 \in \pi_1(H^2_w)\otimes \pi_2(H^2_w)\cug \frg_1\otimes \frg_2.$$

Let $S_L\cug S$ be such that $\{L_s\}_{s\in S_L}$ is a basis of $\pi_1(H^2_w)$. Writing $L_u=\sum_{s\in S_L}x_{su}L_s$ with $x_{su}\in \bbR$ for $u\in S\setminus S_L$ we get 

$$\sum_{\substack{s \in S_L\\ t\in S}}L_s\otimes \left(c_{st} +\sum_{u\in S\setminus S_L} x_{su}c_{ut}\right)R_t=0\implies \sum_{t\in S} \left(c_{st} +\sum_{u\in S\setminus S_L} x_{su}c_{ut}\right)R_t=0$$ for any $s\in S_L$.
Since $(c_{st})_{s,t\in S}$ is a non-degenerate matrix, it follows that we have $\#(S_L)$ linearly independent equations vanishing on $(R_s)_{s\in S}$, 
hence $\dim \pi_2(H^2_w)\leq \dim H^2_w-\#(S_L)=\dim H^2_w- \dim \pi_1(H^2_w)$.
\end{proof}

It also follows that $\Psi: R\raw T$ is an isomorphism, so we have a bigrading on $R$ compatible with the one on $IH_w$.

Hence $H_w$ is also bigraded as a subspace of $IH_w$, since it is the image of 
the map of bigraded vector spaces $R\raw IH_w$ induced by $x\mapsto x(1_w)$, where $1_w$ is a generator of the one dimensional space $IH_w^{-\ell(w)}$.

In the next sections we provide a sufficient condition 
for the Lie algebra $\frg_{NS}(w)$ to be maximal. However there is a case where the proof is considerably easier and we provide it here for convenience and to motivate the reader.

Recall that for any $w\in W$, the set $\{Q_{st}\}_{st\leq w}$ is a basis of $H^4_w$. In particular, if $st\leq w$ for any $s,t\in S$, we have $H^4_w\cong H^4(X)$. In this case from Lemma \ref{kilinv} we have also $\Ker(R^4\raw H^4_w)=(R^W_+)^4=\mathbb{R}\mathcal{X}$.

\begin{cor}\label{cor2}
Assume that the root system of $G$ is irreducible and suppose that whenever $s_i,s_j\leq w$ then $s_is_j\leq w$. Then $\frg_{NS}(w)\cong \mathfrak{aut}(IH_w,\phi)$.
\end{cor}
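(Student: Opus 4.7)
Plan: By Proposition \ref{3.7}, it suffices to show that $\frg_{NS}^\bbC(w)$ is a simple complex Lie algebra. I will argue by contradiction, supposing a non-trivial decomposition $\frg_{NS}^\bbC(w) = \frg_1 \times \frg_2$. By Proposition \ref{split}, this yields a splitting $H^2_w = V_1 \oplus V_2$ with $V_i = \pi_i(H^2_w)$, and an algebra isomorphism $R \cong \text{Sym}(V_1) \otimes \text{Sym}(V_2)$ inducing a bigrading on $R$ compatible with the factorization $IH_w = IH_w^{\bullet,0} \otimes IH_w^{0,\bullet}$. Writing $Q_s = L_s + R_s$ with $L_s \in V_1$, $R_s \in V_2$, I decompose $\calX = \sum_{s,t} c_{st} Q_s Q_t$ into its pure-bigrade components $\calX = \calX^{4,0} + \calX^{2,2} + \calX^{0,4}$ lying in $\text{Sym}^2(V_1)$, $V_1 \otimes V_2$, $\text{Sym}^2(V_2)$ respectively. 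Since $\calX \in R^W_+$ acts as zero on $IH_w$ and the $R$-action respects the bigrading, each component $\calX^{p,q}$ individually acts as zero on $IH_w$.

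Next I show each $\calX^{p,q}$ is a scalar multiple of $\calX$. The combinatorial hypothesis forces $H^4_w = H^4(G/B)$ (every degree-$4$ Schubert class $Q_{st}$ is present), and since $G$'s root system is irreducible, $(R^W_+)^2 = 0$; hence Lemma \ref{kilinv} gives $\Ker(R^4 \to H^4_w) = \bbC \calX$. Moreover $H^4_w$ injects into $\End(IH_w)$ via evaluation on $1 \in H^0_w \subset H_w \hookrightarrow IH_w$ (Remark \ref{natmap}), so $\Ker(R^4 \to \End(IH_w)) = \bbC\calX$ as well, and each $\calX^{p,q} = \lambda_{p,q}\calX$ for some scalar $\lambda_{p,q}$. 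Matching pure bigrades in this identity forces $\lambda_{p,q}\calX^{p',q'} = 0$ for $(p',q') \ne (p,q)$, so at most one $\lambda_{p,q}$ is non-zero; since their sum is $1$, exactly one is non-zero and $\calX$ itself has pure bigrade.

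The case $\calX = \calX^{2,2}$ is ruled out because the proof of Proposition \ref{split} establishes $\calX^{2,2} = 0$. In the case $\calX \in \text{Sym}^2(V_1)$, I view $\calX$ as a symmetric bilinear form on $\hgot$: by Lemma \ref{kilinv} this form is a non-zero scalar multiple of the Killing form, hence non-degenerate. However, any polynomial in $\text{Sym}^2(V_1) \subset \text{Sym}^2(\hgotdc)$ factors through $\hgot/V_1^\perp$, so the corresponding bilinear form vanishes on $V_1^\perp \times \hgot$; hence $V_1^\perp$ lies in the radical of the Killing form, forcing $V_1^\perp = 0$ and therefore $V_2 = 0$, contradicting the non-triviality of the decomposition. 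The case $\calX \in \text{Sym}^2(V_2)$ is symmetric. Hence $\frg_{NS}^\bbC(w)$ is simple, and Proposition \ref{3.7} yields $\frg_{NS}(w) \cong \mathfrak{aut}(IH_w, \phi)$. The main obstacle will be pinning down $\Ker(R^4 \to H^4_w)$ as exactly $\bbC\calX$, which uses both the combinatorial hypothesis (to ensure $H^4_w = H^4(G/B)$) and the irreducibility of the root system (to apply Lemma \ref{kilinv}); after that, the bigrade-matching argument and the non-degeneracy of the Killing form make the contradiction short.
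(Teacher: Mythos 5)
Your proof is correct and follows essentially the same route as the paper: assume a non-trivial splitting, use Proposition \ref{split} to bigrade $R$ compatibly with $IH_w$, use the hypothesis together with Lemma \ref{kilinv} to identify $\Ker(R^4\raw H^4_w)=\bbC\calX$, conclude that $\calX$ would have to be of pure bidegree, and contradict this via the non-degeneracy of the Killing form (Remark \ref{nondeg}). Your detour through the action on $\End(IH_w)$ and the scalar-matching step is just a repackaging of the paper's observation that the kernel of the bigraded map $R^4\raw H^4_w$ is itself a bigraded subspace, so no genuinely new ingredient is involved.
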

\begin{proof}

We assume for contradiction that we have a non-trivial decomposition $\frg_{NS}^\bbC(w)=\frg_1\times \frg_2$. From Proposition \ref{split}  we know that $H^4_w$ splits as $H^{4,0}_w\oplus H_w^{2,2}\oplus H^{0,4}_w$.
This implies that also $K:= \Ker(R^4\raw H^4_w)$ splits as $K=K^{4,0}\oplus K^{2,2}\oplus K^{0,4}$ where $K^{i,j}=\Ker(R^{i,j}\raw H^{i,j}_w)$. 
But $K$ is one dimensional and generated by $\calX$, thus $\calX$ belongs to either $R^{4,0}$, $R^{2,2}$ or $R^{0,4}$, which is impossible since $\calX$ is non-degenerate (see Remark \ref{nondeg}). Hence the Lie algebra $\frg_{NS}^\bbC(w)$ must be simple. We can now apply proposition \ref{3.7} to deduce $\frg_{NS}(w)\cong \mathfrak{aut}(IH_w,\phi)$.
\end{proof}

\subsection{A directed graph associated to an element}
Let $w\in W$. We construct an oriented graph $\mathcal{I}_w$ as follows: the vertices are indexed by the set of simple reflections $S$ and we put an arrow $s\raw t$ if $ts\leq w$ and $ts\neq st$ 
(i.e. if $ts\leq w$ and $s$ and $t$ are connected in the Dynkin diagram). 

Recall that we assumed, by shrinking to a Levi subgroup, that $s\leq w$ for any $s\in S$. It follows that for any pair $s,t\in S$ we have either $st\leq w$, $ts\leq w$ or both. Hence the graph $\mathcal{I}_w$ is just the Dynkin diagram where each edge $s-t$ is replaced by the arrow $s\leftarrow t$, by the arrow $s\raw t$, or by both $s\rightleftarrows t$.
In particular, if the Dynkin diagram is connected, then also $\mathcal{I}_w$ is connected. In this case we call $w$ \textit{connected}. 

\begin{remark}\label{loops}
Since the Dynkin diagram has no loops, then also $\mathcal{I}_w$ has no non-oriented loops (we only consider loops in which for any pair $s,t\in S$ at most one of the arrows $s\raw t$ and $t\raw s$ occurs). 
\end{remark}

We call a subset $C\cug S$ \emph{closed} if any arrow in $\mathcal{I}_w$ starting in $C$ ends in $C$.
Union and intersection of closed subsets are still closed.
We call a closed singleton in $S$ a \textit{sink}.

\begin{example}
Let $W$ be the Coxeter group of $D_5$. We label the simple reflections as follows:
\[ \begin{tikzpicture}[auto, baseline=(current  bounding  box.center)]
      \node  (u) at (0,0) {$s_3$};
      \node (t) at (180:1) {$s_2$};
      \node  (v) at (60:1) {$s_4$};
      \node  (w) at (-60:1) {$s_5$};
	\node (s) at (180:2) {$s_1$};

      \draw (t) to (s);
      \draw (t) to (u);
      \draw (u) to (v);
 \draw (u) to (w);
   \end{tikzpicture} \]

Consider the element $w=s_1s_2s_4s_3s_5s_2s_1$. Then the diagram $\mathcal{I}_w$ associated to $w$ is:
\[ \begin{tikzpicture}[auto, baseline=(current  bounding  box.center)]
      \node  (u) at (0,0) {$s_3$};
      \node (t) at (180:1) {$s_2$};
      \node  (v) at (60:1) {$s_4$};
      \node  (w) at (-60:1) {$s_5$};
	\node (s) at (180:2) {$s_1$};

      \draw [->,out=-150, in=-30, thick] (t) to (s);
 \draw [->,out=30, in=150, thick] (s) to (t);
   \draw [->,out=-150, in=-30, thick ] (u) to (t);
 \draw [->,out=30, in=150, thick] (t) to (u);
      
      \draw [->, thick] (v) to (u);
 \draw [->, thick] (u) to (w);

\draw [red, thick] (-60:1) circle [radius=0.5];
\draw [orange, thick] (0.8,-1) to [out=-90, in=0] (0.3,-1.3) to [out=180, in=-70] (-0.3,-1) to [out=110, in=-30] (-2,-0.7) to [out=150, in=-90] (-2.5, 0) to 
[out=90, in=210] (-2,0.5) to [out=30, in=180] (-0.5,0.7) to [out=0, in=120] (0.5,0.3) to [out=-60, in=90] (0.8,-1);
\draw [blue, thick] (-0.5,0) ellipse (2.5 and 1.4);
   \end{tikzpicture} \]
Here the coloured lines describe all  the non-empty closed subsets of $\mathcal{I}_w$.
\end{example}

As we show in the following sections, the graph $\mathcal{I}_w$ determines $H^4_w$, and we can make use of it to provide obstructions for the algebra $\frg_{NS}(w)$ to not admit a decomposition, hence find sufficient conditions for the algebra $\frg_{NS}(w)$ to be simple.
More specifically, we prove in Theorem \ref{Theorem} that, if $\mathcal{I}_w$ is connected and has no sinks, then $\frg_{NS}(w)$ is maximal.

\subsection{Reduction to the connected case}

If $w$ is not connected,  we can write $w=w_1w_2$, with $\ell(w)=\ell(w_1)+
\ell(w_2)$ such that $(\alpha_{s_1},\alpha_{s_2})=0$ for any $s_1\leq w_1, s_2\leq w_2$.

\begin{prop}\label{nonconnected}
If $w=w_1w_2$ as above, then we have decompositions $IH_w\cong IH_{w_1}\otimes_\bbC IH_{w_2}$ and $\frg_{NS}(w)\cong \frg_{NS}(w_1)\times \frg_{NS}(w_2)$.
\end{prop}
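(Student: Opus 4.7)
The plan is to realize $X_w$ as a product $X_{w_1}\times X_{w_2}$ of Schubert varieties, then deduce the cohomological decomposition from Künneth and the Lie algebra decomposition from a tensor product of $\frsl_2$-triples.

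As in \S 4.1, I would first replace $G$ by its Levi subgroup corresponding to the support of $w$, so $S=S_1\cup S_2$ with $S_i$ the support of $w_i$. The hypothesis $(\alpha_{s_1},\alpha_{s_2})=0$ for $s_1\leq w_1$, $s_2\leq w_2$ implies $S_1\cap S_2=\emptyset$ and that the Dynkin diagram of the Levi decomposes as the disjoint union of the subdiagrams on $S_1$ and $S_2$. Consequently, up to central isogeny (which does not affect flag varieties), the Levi factors as a direct product $G_1\times G_2$, giving $G/B\cong G_1/B_1\times G_2/B_2$, and because $w_i\in W_{S_i}$ commute, $X_w=X_{w_1}\times X_{w_2}$. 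The Künneth formula for intersection cohomology now yields the desired $R$-module isomorphism $IH_w\cong IH_{w_1}\otimes_\bbC IH_{w_2}$, where $R=\sym(H^2_{w_1})\otimes\sym(H^2_{w_2})$ acts diagonally.

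For the Lie algebra statement, consider the injective Lie algebra map $\iota:\frgl(IH_{w_1})\oplus\frgl(IH_{w_2})\hookrightarrow\frgl(IH_w)$ given by $(x_1,x_2)\mapsto x_1\otimes 1+1\otimes x_2$. The key observation is that if $v_i\in(H^2_{w_i})_{\mathcal{L}}$ for $i=1,2$, the tensor product of the two $\frsl_2$-triples is an $\frsl_2$-triple $\{v_1+v_2,\,h_1+h_2,\,f_{v_1}+f_{v_2}\}$ on $IH_w$ (viewed via $\iota$). Since $h_1+h_2$ equals the total grading operator $h$ on $IH_w$, the uniqueness of $f$ in an $\frsl_2$-triple forces $v_1+v_2\in V_{\mathcal{L}}$ and $\mathfrak{f}(v_1+v_2)=\iota(f_{v_1},f_{v_2})$.

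The containment $\frg_{NS}(w)\subseteq\iota(\frg_{NS}(w_1)\times\frg_{NS}(w_2))$ then follows: $\mathfrak{e}(v_1+v_2)=\iota(v_1,v_2)$ for any $v=v_1+v_2\in V$, and the non-empty Zariski open subset $U=\{v_1+v_2:v_i\in(H^2_{w_i})_{\mathcal{L}}\}$ of $V_{\mathcal{L}}$ maps into $\iota(\frgl(IH_{w_1})\oplus\frgl(IH_{w_2}))$ under $\mathfrak{f}$, so by the closedness of the image and regularity of $\mathfrak{f}$ all of $\mathfrak{f}(V_{\mathcal{L}})$ lies there. Conversely, $\iota(v_1,0)=\mathfrak{e}(v_1)\in\frg_{NS}(w)$ for any $v_1\in H^2_{w_1}$; and for $v_1\in(H^2_{w_1})_{\mathcal{L}}$, fixing any polarization $v_2\in(H^2_{w_2})_{\mathcal{L}}$, the commutators $[\mathfrak{e}(v_1),\mathfrak{f}(v_1+v_2)]=h_1\otimes 1$ and then $[h_1\otimes 1,\mathfrak{f}(v_1+v_2)]=-2f_{v_1}\otimes 1$ successively extract $\iota(f_{v_1},0)\in\frg_{NS}(w)$; symmetrically for $\iota(0,\frg_{NS}(w_2))$. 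The main technical subtlety is the Künneth identification as an $R$-module (and checking that the tensor product of two Lefschetz operators is Lefschetz); once these are in place, the $\frsl_2$-triple calculations are routine.
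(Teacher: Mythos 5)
Your proof is correct, and the first half (reduce to the Levi of the support, split the Dynkin diagram, get $X_w\cong X_{w_1}\times X_{w_2}$ and then K\"unneth for $IH$) is exactly what the paper does. Where you diverge is the Lie algebra statement: the paper disposes of it in one line by observing that $h$ and $H^2_w=H^2_{w_1}\oplus H^2_{w_2}$ lie in the semisimple subalgebra $\frg_{NS}(w_1)\times\frg_{NS}(w_2)\cug\frgl(IH_w)$ and invoking Lemma \ref{commute}, so that every $\mathfrak{f}(v)$, $v\in V_{\mathcal{L}}$, automatically lands in that subalgebra --- no density argument and no case analysis on $v$ is needed, and this is precisely what Lemma \ref{commute} was set up for. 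Your route instead verifies by hand that sums of $\frsl_2$-triples on the two factors give $\frsl_2$-triples on the tensor product, uses Zariski density of $U=(H^2_{w_1})_{\mathcal{L}}\times(H^2_{w_2})_{\mathcal{L}}$ in $V_{\mathcal{L}}$ to propagate the containment of $\mathfrak{f}$, and then extracts $f_{v_1}\otimes 1$ and $1\otimes f_{v_2}$ by explicit commutators. This is more elementary (it does not use semisimplicity of $\frg_{NS}(w_i)$, i.e.\ Proposition \ref{gss}) and has the merit of making the reverse inclusion $\frg_{NS}(w_1)\times\frg_{NS}(w_2)\cug\frg_{NS}(w)$ fully explicit, which the paper leaves implicit. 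Two small wording points: in the density step the relevant closed set is the linear (hence Zariski closed) subspace $\iota(\frg_{NS}(w_1)\times\frg_{NS}(w_2))$ itself, not ``the image'' of $\mathfrak{f}$, and you should state the target as this subspace rather than $\iota(\frgl(IH_{w_1})\oplus\frgl(IH_{w_2}))$ --- your computation on $U$ already gives the stronger containment, and it is the one you need; neither slip affects the argument.
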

\begin{proof}
In this case $X_w\cong X_{w_1}\times X_{w_2}$, so $IH_w=IH_{w_1}\otimes IH_{w_2}$. Moreover $H_w^2=H_{w_1}^2\oplus H_{w_2}^2$ where $H_{w_1}$ acts on the factor $IH_{w_1}$ while $H_{w_2}$ acts on $IH_{w_2}$.
Since the Lie algebra $\frg_{NS}(w_1)\times \frg_{NS}(w_2)$ is semisimple and both $h$ and $H^2_w$ are contained in $\frg_{NS}(w_1)\times \frg_{NS}(w_2)$, from Lemma \ref{commute} we have $\frg_{NS}(w)= \frg_{NS}(w_1)\times \frg_{NS}(w_2)$.
\end{proof}

\subsection{The connected case}

In view of Proposition \ref{nonconnected} we can restrict ourselves to the case of a connected $w$.

\begin{lemma}
Let $w$ be connected and let $K=\Ker(\sym^2(H^2_w)\raw H^4_w)$. Then the elements $\calX_C:=\sum_{s,t\in C}c_{st}Q_sQ_t$, with $C$ closed,  generate $K$.
\end{lemma}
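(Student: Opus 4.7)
To prove $\calX_C \in K$ for each closed $C$, I would compute its image in $H^4(X) = R^4/\bbR\calX$ via Lemma \ref{conti}. Expanding $\calX_C = \sum_{s \in C} c_{ss}Q_s^2 + 2\sum_{\{s,t\}\subset C,\, s\neq t} c_{st}Q_sQ_t$ and using the identity $c_{ss}Q_s^2 = -2\sum_{u\sim s}c_{us}Q_{us}$ (a reformulation of Lemma \ref{conti}~i)), together with $Q_sQ_t = Q_{st}+Q_{ts}$ for $s\sim t$ and $c_{st}=0$ for $s\not\sim t$, the contributions indexed by pairs inside $C$ cancel, leaving
\[
\calX_C \;\equiv\; -2\sum_{\substack{s\in C,\, u\notin C \\ s\sim u}} c_{us}\,Q_{us} \quad\text{in } H^4(X).
\]
Closedness of $C$ forbids arrows $s\to u$ exiting $C$; since $s\sim u$ this forces $us\not\leq w$, so $Q_{us}=0$ in $H^4_w$ and $\calX_C \in K$.

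Next I would compute $\dim K$. A first-occurrence argument in a reduced expression of $w$ shows that for $s,t\in S$ with $s,t\leq w$, at least one of $st,ts$ is $\leq w$; in particular commuting pairs $s\not\sim t$ satisfy $st=ts\leq w$ automatically. Thus the length-$2$ elements missing from $X_w$ are in bijection with $E := \{(s,u) : s\sim u,\, us\not\leq w\}$, and Lemma \ref{kilinv} combined with this gives $\dim K = 1 + |E|$.

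For spanning, for each $s\in S$ define $C_s = \{s\}\cup\{t : t \text{ is reachable from } s \text{ in } \mathcal{I}_w\}$; this is closed and depends only on the strongly connected component $[s]$ of $s$. Since an irreducible finite Dynkin diagram is a tree, any directed cycle in $\mathcal{I}_w$ must use some Dynkin edge in both directions, so the SCCs are exactly the connected components of the subgraph of bidirected edges of $\mathcal{I}_w$; counting then yields $\#\mathrm{SCCs} = |S| - \#\{\text{bidirected edges}\} = 1 + |E| = \dim K$. Picking one representative $s_X\in X$ for each SCC, I would verify linear independence of $\{\calX_{C_{s_X}}\}_X$ by reading off the $Q_s^2$-coefficients of a hypothetical relation $\sum_X a_X\calX_{C_{s_X}} = 0$: this coefficient equals $c_{ss}\sum_{X\leq [s]} a_X$ in the reachability preorder on SCCs, so $\sum_{X\leq Y} a_X = 0$ for every SCC $Y$, and induction starting from minimal SCCs forces all $a_X=0$. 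Hence the $\calX_{C_{s_X}}$ form a basis of $K$, and every $\calX_C$ lies in their span. The main obstacle is the tree-property of the finite Dynkin diagram, which underpins both the SCC-count and the linear-independence argument; without it (e.g.\ if $\mathcal{I}_w$ could contain cycles not accounted for by bidirected edges) both the counting and the construction of $C_s$ would break down.
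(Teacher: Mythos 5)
Your proof is correct, and it follows the same overall skeleton as the paper (show each $\calX_C$ lies in $K$; compute $\dim K$; exhibit a linearly independent subfamily of the right size), but both halves are executed differently. For membership, you expand $\calX_C$ directly via Lemma \ref{conti} and observe the cancellation of the terms internal to $C$, leaving $-2\sum c_{us}Q_{us}$ over arrows that would have to exit $C$; the paper reaches the same conclusion indirectly, by comparing the expansion of $\calX_C$ (equation \eqref{eq1}) with that of $\calX-\calX_C$ together with $\bar{\calX}=0$ (equation \eqref{eq2}). Your version is a touch more transparent. For spanning, the paper takes $\calX_S$ together with, for each missing product $ab\not\leq w$, the closed ``half-tree'' $C_{ab}$ obtained by deleting the arrow $a\raw b$, and checks independence by inspecting the coefficients of $Q_a^2$ and $Q_b^2$; you instead take reachability closures $C_s$ indexed by the strongly connected components of $\mathcal{I}_w$, identify the SCCs with the components of the bidirected subgraph (this is exactly where Remark \ref{loops}, i.e.\ the tree property of the Dynkin diagram, enters -- the same point the paper uses to see $a\notin C_{ab}$), match $\#\mathrm{SCCs}=1+|E|=\dim K$, and get independence from a triangularity argument over the condensation DAG. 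Both counts of $\dim K$ agree with the paper's $\dim K=\#\{(s,t)\mid st\not\leq w\}+1$, which rests on the surjectivity of $\sym^2(H^2_w)\raw H^4_w$ and on Lemma \ref{kilinv}; you use these implicitly, and your preliminary observation that for $s,t\leq w$ at least one of $st,ts$ is $\leq w$ is already recorded in the paper just before the lemma. In short: a valid proof, structurally parallel, with a cleaner membership computation and a different (SCC-based) choice of basis whose bookkeeping is arguably more systematic, at the cost of setting up the condensation-order induction.
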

\begin{proof}
We know that $\dim K=\#\{(s,t)\in S^2\mid st\not\leq w\}+1$ because $\sym^2(H^2_w)\raw H^4_w$ is surjective. Since $w$ is connected, if $st\not\leq w$ then $s$ and $t$ are connected by an edge in the Dynkin diagram and $ts\leq w$.

Let $(a,b)$ be any pair of elements of $S$ such that $ab\not\leq w$, i.e. such that there is no arrow $b\raw a$. We can define a proper closed subset $C_{ab}$ by taking the connected component of $b$ in $\mathcal{I}_w$ after erasing the arrow $a\raw b$. From Remark \ref{loops} it follows that $a\not\in C_{ab}$.
It is easy to see that $\mathcal{X}_{C_{ab}}$ together with $\calX=\calX_S$ are linearly independent in $\sym^2(H^2_w)$:
in fact when we write them in the basis $\{Q_sQ_t\}_{s,t\in S}$ we have $\calX_{C_{ab}}\in c_{bb}Q_b^2+\mathcal{R}_{ab}$, where $\mathcal{R}_{ab}=\Span \langle Q_sQ_t\mid (s,t)\neq (a,a),(b,b)\rangle$, 
while all the other $\calX_{C_{a'b'}}$ are either in $\mathcal{R}_{ab}$ or in $c_{aa}Q_a^2+c_{bb}Q_b^2+\mathcal{R}_{ab}$.

By the formula for the dimension of $K$ given above, it remains to show that all the $\calX_C$, for $C$ closed, lie in $K$. 
Let $\bar{y}$ denote the projection to $H^4(G/B)$ of an element $y\in \sym^2(H^2_w)$.
Let $C$ be a closed subset and let $E:=\{a(i)\stackrel{i}{\raw} b(i)\mid a(i)\not \in C\text{ and }b(i)\in C\}$ be the set of arrows starting outside $C$ and ending in $C$.
Applying Lemma \ref{conti}, on one hand we obtain:
\begin{equation}\label{eq1}
 \bar{\calX_C}=\sum_{s,t\in C}c_{st}\bar{Q_sQ_t}\in \Span\langle Q_{st}\mid s,t\in C\rangle\oplus \Span\langle Q_{a(i)b(i)}\mid i \in E\rangle\cug H^4(G/B).
 \end{equation}

On the other hand we have
$$\calX-\calX_C=\sum_{s,t\not \in C}c_{st}Q_sQ_t+\sum_{i\in E}2c_{a(i)b(i)}Q_{a(i)}Q_{b(i)}\in \sym^2(H^2_w).$$

Since $\bar{\calX}=0$ in $H^4(G/B)$, projecting from $R^4$ to $H^4(G/B)$ we obtain
\begin{equation}\label{eq2}
\bar{\calX_C}\in \Span \langle Q_{st} \mid s,t \not \in C\rangle \oplus \Span\langle Q_{a(i)b(i)}\mid i \in E\rangle\oplus\Span \langle Q_{b(i)a(i)}\mid i \in E\rangle.
\end{equation}

Then \eqref{eq1} together with \eqref{eq2} implies that the projection $\bar{\calX_C}$ of $\calX_C$ to $H^4(G/B)$ lies in $\Span\langle Q_{a(i)b(i)}\mid i \in E\rangle$. 
But, for any $i \in E$, $Q_{a(i)b(i)}$ projects to $0$ in $H^4_w$ since $a(i)b(i)\not\leq w$, whence $\calX_C\in K$.
\end{proof}

For a closed $C$ let $NS(C):=\text{span}\langle Q_s\mid s\in C\rangle\cug H^2_w$.
The proof of  Proposition \ref{split} applies also to $NS(C)$ if we replace $\calX$ by $\calX_C=\sum_{s,t\in C}c_{st}Q_sQ_t$. This means that whenever we have a decomposition $\frg_{NS}^\bbC(w)=\frg_1\times \frg_2$, then $NS(C)$ splits compatibly.

\begin{lemma}
Let $K_C:=K\cap \sym^2(NS(C))$. Then $K_C$ is generated by $\calX_D$, with $D$ closed and $D\cug C$.
\end{lemma}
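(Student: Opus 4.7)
The plan is to show that $K'_C := \Span\{\calX_D \mid D \cug C,\; D \text{ closed}\}$ equals $K_C$. The containment $K'_C \cug K_C$ is immediate from the previous lemma: for such $D$ we have $\calX_D \in K$ by that lemma, and $\calX_D \in \sym^2(NS(D)) \cug \sym^2(NS(C))$ by inspection.

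For the reverse inclusion, given $y \in K_C$ I first invoke the previous lemma to write $y = \sum_{D \text{ closed}} \lambda_D \calX_D$ for some scalars $\lambda_D$. The key device is the direct-sum decomposition
\[
\sym^2(H^2_w) = \sym^2(NS(C)) \oplus \bigl(NS(C) \otimes NS(S\setminus C)\bigr) \oplus \sym^2(NS(S\setminus C)),
\]
together with the observation that the projection of $\calX_D = \sum_{s,t\in D} c_{st} Q_s Q_t$ onto the first summand is obtained by keeping only the terms with $s,t \in D \cap C$, and therefore equals $\calX_{D \cap C}$. Since $y$ already lies in $\sym^2(NS(C))$, this projection fixes $y$, yielding the identity $y = \sum_D \lambda_D \calX_{D \cap C}$.

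The crucial point — and essentially the only content of the argument — is the observation that the intersection of two closed subsets of $\mathcal{I}_w$ is again closed. Indeed, if $s \in D \cap C$ and $s \raw t$ is an arrow in $\mathcal{I}_w$, then closedness of $D$ forces $t \in D$ while closedness of $C$ forces $t \in C$, hence $t \in D \cap C$. Consequently each $D \cap C$ is a closed subset of $C$, so every $\calX_{D \cap C}$ appearing on the right-hand side is one of the allowed generators of $K'_C$, and we conclude $y \in K'_C$ as desired.
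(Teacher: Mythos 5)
Your proof is correct and follows essentially the same route as the paper: the paper writes an element of $K_C$ as $\sum_i a_i\calX_{D_i}$ via the previous lemma and asserts ``it is easy to see'' that this equals $\sum_i a_i\calX_{D_i\cap C}$, which is exactly your projection-onto-$\sym^2(NS(C))$ argument, while the closedness of $D\cap C$ is the paper's earlier remark that intersections of closed subsets are closed. You have merely made the implicit steps explicit; nothing is missing.
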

\begin{proof}
Let $\sum_{i}a_i\calX_{D_i}\in K\cap \sym^2(NS(C))$ with $D_i$ closed and $a_i\in \bbC$. Then it is easy to see that $\sum_{i}a_i\calX_{D_i}=\sum_{i}a_i\calX_{D_i\cap C}\in \sym^2(NS(C))$.
\end{proof}

For any $s\in S$, let $L_s=\pi_1(Q_s)\in \frg_1$ and $R_s=\pi_2(Q_s)\in \frg_2$.

\begin{lemma}\label{casobase}
Let $C$ be a connected and closed subset of $S$. Assume that there exists a non-empty closed subset $D\cug C$ such that $NS(D)=\pi_1(NS(C))$.
Then if $D$ does not contain any sink we have $D=C$.
\end{lemma}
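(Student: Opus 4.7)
I would assume $D\subsetneq C$ for contradiction and exploit two facts: the $T^{2,2}$-component of $\Psi(\calX_C)$ vanishes (by the faithfulness argument in the proof of Proposition \ref{split} applied to $NS(C)$), and $\calX_C \in K$. The vanishing of the $T^{2,2}$-component forces, after writing $L_s = Q_s$ and $R_s = 0$ for $s\in D$ and $L_s = \sum_{r\in D}\lambda_{sr}Q_r \in NS(D)$ for $s\in C\setminus D$, the linear system
\[
c_{rt}+\sum_{s\in C\setminus D}c_{st}\lambda_{sr}=0,\qquad r\in D,\ t\in C\setminus D,
\]
which in matrix form reads $\Lambda=-A^{-1}B^{T}$, with $A=(c_{st})_{s,t\in C\setminus D}$ and $B=(c_{rt})_{r\in D,\,t\in C\setminus D}$ the off-diagonal blocks of the positive definite Gram matrix $(c_{st})_{s,t\in C}$ of the Killing form.

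Substituting $\Lambda=-A^{-1}B^{T}$ into $\calX_C^{(4,0)}=\sum_{s,t\in C}c_{st}L_sL_t$ and simplifying yields the Schur-complement identity
\[
\calX_C^{(4,0)}=\sum_{r,r'\in D}\bigl(C_{DD}-BA^{-1}B^{T}\bigr)_{rr'}Q_rQ_{r'}.
\]
The correction $BA^{-1}B^{T}$ is positive semidefinite and is strictly positive on the diagonal at every $r\in D$ having a Dynkin-neighbour in $C\setminus D$, because $A^{-1}$ is positive definite as the inverse of a principal submatrix of a positive definite matrix. Because the splitting $\frg_{NS}^{\bbC}(w)=\frg_1\times\frg_2$ induces compatible bigradings on $H_w=H_w^{\bullet,0}\otimes H_w^{0,\bullet}$ and $\calX_C$ annihilates $H_w$, each bigraded piece of $\Psi(\calX_C)$ annihilates $H_w$ separately, so $\calX_C^{(4,0)}\in\sym^2(NS(D))\cap K=K_D$. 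By the preceding lemma we may write $\calX_C^{(4,0)}=\sum_{E\cug D\text{ closed}}\alpha_E\calX_E$.

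Matching coefficients of $Q_rQ_{r'}$ in this identity yields two kinds of constraints. For non-adjacent $r,r'\in D$ every $\calX_E$ has vanishing coefficient on $Q_rQ_{r'}$ (since $c_{rr'}=0$), forcing $(BA^{-1}B^{T})_{rr'}=0$. For $r\sim r'$ or $r=r'$ one reads off
\[
\sum_{E\supseteq\{r,r'\}}\alpha_E \;=\;1-\frac{(BA^{-1}B^{T})_{rr'}}{c_{rr'}}.
\]
Since $C$ is connected and $D\subsetneq C$, there exists $r^{\star}\in D$ Dynkin-adjacent to some $u\in C\setminus D$ (the edge $r^{\star}u$ carries only the arrow $u\raw r^{\star}$ by closedness of $D$), and positive definiteness of $A^{-1}$ then gives $\sum_{E\ni r^{\star}}\alpha_E<1$ strictly.

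The final step brings in the no-sinks hypothesis. Because a closed singleton in $\mathcal{I}_w$ is precisely a sink, the assumption forbids any $\{d\}\cug D$ from being closed, so every non-empty closed $E\cug D$ has $|E|\geq 2$ and every $r\in D$ has an outgoing arrow $r\raw r_1$ that remains inside $D$. Propagating the identities $\sum_{E\supseteq\{r,r'\}}\alpha_E=1-(BA^{-1}B^{T})_{rr'}/c_{rr'}$ along such chains, and exploiting that the one-vertex elements $\calX_{\{d\}}$ are unavailable in $K_D$, one shows that the marginals $\gamma_r:=\sum_{E\ni r}\alpha_E$ are rigidly tied together by the adjacency constraints, in a way incompatible with the strict inequality $\gamma_{r^{\star}}<1$. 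This contradicts $D\subsetneq C$ and so $D=C$. The main obstacle is precisely this last combinatorial step: turning the strict positivity of the Schur-complement correction on the boundary of $D$ into a rigid impossibility in the lattice of closed subsets of $D$, which works exactly because the no-sinks hypothesis removes the one-vertex ``safety valves'' $\calX_{\{d\}}$ that would otherwise absorb the boundary discrepancy.
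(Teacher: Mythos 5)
Your setup is sound and, up to a change of viewpoint, coincides with the first half of the paper's argument: the vanishing of the $(2,2)$-component gives exactly the linear system the paper solves (there written as $\sum_{s\in C}c_{su}L_s=0$ for $u\in C\setminus D$), and your Schur-complement identity for $\calX_C^{(4,0)}$ is the same relation the paper writes from the other factor as $\sum_{s,t\in\tilde U}c_{st}R_sR_t=\calX_{D\cup\tilde U}-\calX_{D}+\Theta$; the observation that the bigraded pieces of $\Psi(\calX_C)$ annihilate $IH_w$ separately, so that $\calX_C^{(4,0)}\in K_D$, is also legitimate. The problem is the last step, which you yourself flag as ``the main obstacle'': the claim that the marginal constraints $\gamma_r=\sum_{E\ni r}\alpha_E$, together with the absence of singleton closed subsets of $D$, are ``rigidly tied together \dots in a way incompatible with $\gamma_{r^\star}<1$'' is asserted, not proved. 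As it stands this is a genuine gap, and it is not a routine one: the coefficients $\alpha_E$ have no sign control, so you cannot propagate inequalities along chains, and the constraint system you wrote is a priori just a linear system on the lattice of closed subsets of $D$ whose inconsistency is exactly what has to be established. Nothing in your outline explains how ``no sinks'' (i.e.\ every $b\in D$ has an outgoing arrow) enters beyond the exclusion of the generators $\calX_{\{d\}}$, and it is not clear that this exclusion alone suffices.

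For comparison, the paper closes the argument with two concrete inputs you never use. First, the solution of the linear system has \emph{positive} coefficients ($y(s,i)>0$), because it is, up to positive scalars, a fundamental weight of the root subsystem spanned by $\tilde U$ written in simple roots; this pins down the sign $y(a(i),i)c_{a(i)b(i)}<0$ of the boundary terms. Second, the Chevalley formula (Lemma \ref{conti}(i)) expands $Q_{b(i)}^2$ in $H^4_w$ as a combination of classes $Q_{\beta b(i)}$ indexed precisely by the arrows $b(i)\raw\beta$ in $\mathcal{I}_w$; linear independence of the Schubert classes then shows that the relation in $K_D$ can only hold if each boundary vertex $b(i)\in D$ has \emph{no outgoing arrows}, i.e.\ is a sink, which is the desired contradiction. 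In other words, the no-sink hypothesis is used through the explicit multiplicative structure of $H^4_w$ at the boundary vertices, not through a counting/rigidity argument in the lattice of closed subsets; to complete your proof you would either have to supply the missing combinatorial argument (which I do not see how to do without sign information on the $\alpha_E$) or reintroduce these two ingredients, at which point you have essentially reproduced the paper's proof.
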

\begin{proof}

Let $U=C\setminus D$ and $E:=\{a(i)\stackrel{i}{\raw} b(i)\mid a(i) \in U\text{ and }b(i)\in D\}$ be the set of arrows starting in $U$ and ending in $D$. 
The set $\{L_s\}_{s\in D}=\{Q_s\}_{s\in D}$ is a basis of $NS(D)=\pi_1(NS(C))$, therefore  the set $\{R_u\}_{u\in U}$ is a basis of $\pi_2(NS(C))$. 
We assume for contradiction that $U\neq \emptyset$. By writing the $(2,2)$-component of $\calX_C-\calX_{D}$ we have

$$\sum_{u\in U}\left(\sum_{s\in C} c_{su} L_s\right)\otimes R_u=0\in \frg_1\otimes \frg_2 $$
from which we get $\sum_{s\in C}c_{su} L_s=0$ for any $u\in U$.

Let $\tilde{U}$ be a connected component of $U$ and let $\tilde{E}=\{a(i)\stackrel{i}{\raw} b(i)\mid a(i) \in \tilde{U}\text{ and }b(i)\in D\}\cug E$. Since $C$ is connected we have $\tilde{E}\neq \emptyset$. Since $\tilde{U}$ is connected and there are no loops in the Dynkin diagram, we have $b(i)\neq b(j)$ for any $i\neq j\in \tilde{E}$, and moreover there are no arrows between $b(i)$ and $b(j)$.
Then for any $u \in \tilde{U}$ we have 
$$0=\sum_{s\in C}c_{su} L_s=\sum_{s\in \tilde{U}}c_{su}L_s+\sum_{i\in \tilde{E}}c_{b(i)u}L_{b(i)}.$$

Since the set $\{L_{b(i)}\}_{i\in \tilde{E}}$ is linearly independent, this can be thought as a non-degenerate system of linear equations in  $L_s$, with $s\in \tilde{U}$  and it has a unique solution
$$L_s=\sum_{i \in \tilde{E}}y(s,i)L_{b(i)}=\sum_{i \in \tilde{E}}y(s,i)Q_{b(i)}\text{ with }y(s,i)\in\bbR.$$
In particular

\begin{equation}\label{yfw}
\sum_{s\in \tilde{U}}y(s,i)c_{su}=\begin{cases}0 & \text{if } u\neq a(i) \\ -c_{a(i)b(i)} &\text{if }u=a(i)\end{cases}\qquad \forall u\in \tilde{U}, \forall i \in \tilde{E}.
\end{equation}

\begin{claim}
 We have $y(s,i)>0$ for any $s\in \tilde{U}$ and any $i\in \tilde{E}$.
\end{claim}
\begin{proof}[Proof of the claim]
From Equation \eqref{yfw} it is easy to see that 
$$\left(\sum_{s\in \tilde{U}}\frac{y(s,i)}{(\alpha_s,\alpha_s)}\alpha_s,\alpha_u\right)=-\delta_{a(i),u}c_{a(i)b(i)}(\alpha_u,\alpha_u)\qquad \forall u\in \tilde{U}, \forall i \in \tilde{E}.$$
Hence $\sum_{s\in \tilde{U}}\frac{y(s,i)}{(\alpha_s,\alpha_s)}\alpha_s$ is (up to a positive scalar) equal to the fundamental weight of $a(i)$ in the root system generated by the simple roots in $\tilde{U}$.
Now the claim follows from the fact that in any irreducible root system all the fundamental weights have only positive  coefficients when expressed in the basis of simple roots.
\end{proof}

For any $s\in \tilde{U}$ we have  $R_s=Q_s-\sum_{i\in \tilde{E}}y(s,i)Q_{b(i)}\in \frg_2$. Now consider the element
$$R^{0,4}\ni\hspace{-0.3pt}\sum_{s,t\in \tilde{U}}c_{st}R_sR_t=\sum_{s,t\in \tilde{U}}c_{st}\left(Q_s-\sum_{i\in \tilde{E}}y(s,i)Q_{b(i)}\right)\left(Q_t- \sum_{i\in \tilde{E}}y(t,i)Q_{b(i)}\right)=$$
\begin{equation*}
\begin{split}=\left(\sum_{s,t\in \tilde{U}}c_{st}Q_sQ_t\right)-2\sum_{i\in \tilde{E}}\left(\sum_{s,t\in \tilde{U}}y(s,i)c_{st}Q_t\right)Q_{b(i)}+\\\sum_{i,j\in \tilde{E}}\left(\sum_{s,t\in \tilde{U}} y(s,i)y(t,j)c_{st}\right)Q_{b(i)}Q_{b(j)}=\end{split}
\end{equation*}
$$=\left(\sum_{s,t\in \tilde{U}}c_{st}Q_sQ_t\right)+2\sum_{i\in \tilde{E}}c_{a(i)b(i)}Q_{a(i)}Q_{b(i)}-\sum_{i,j\in \tilde{E}}y(a(j),i)c_{a(j)b(j)}Q_{b(i)}Q_{b(j)}=$$
$$=\calX_{D\cup \tilde{U}}-\calX_{D}+\Theta\qquad\text{ with }\Theta:=-\sum_{i,j\in \tilde{E}}y(a(j),i)c_{a(j)b(j)}Q_{b(i)}Q_{b(j)}.$$

Let $p:R^4\raw H^4_w$ denote the projection.
The previous equation implies that 
$$p\left(\sum_{s,t\in \tilde{U}}c_{st}R_sR_t\right)=p(\Theta).$$
But $p(\sum_{s,t\in \tilde{U}}c_{st}R_sR_t)\in H^{0,4}_w$ while $p(\Theta)\in H_w^{4,0}$, because $b(i)\in D$ and $Q_{b(i)}\in H_w^{2,0}$ for any $i\in \tilde{E}$. It follows that $p(\Theta)\in H^{4,0}\cap H^{0,4}=\{0\}$.

We can write $\Theta=\Theta_1+\Theta_2$ with 
$$\Theta_1=\sum_{\stackrel{i,j\in\tilde{E}}{i\neq j}}y(a(j),i)c_{a(j)b(j)}Q_{b(i)}Q_{b(j)}\quad\text{ and }\quad\Theta_2=\sum_{i\in \tilde{E}}y(a(i),i)c_{a(i)b(i)}Q_{b(i)}^2.$$
Since there are no edges between $b(i)$ and $b(j)$, we have that  $p(Q_{b(i)}Q_{b(j)})=Q_{b(i)b(j)}$ for any $i,j\in\tilde{E}$ such that $i\neq j$. Thus, by Lemma \ref{conti}, we have
$$p(\Theta_1)=\sum_{\stackrel{i,j\in\tilde{E}}{i\neq j}}y(a(j),i)c_{a(j)b(j)}Q_{b(i)b(j)}$$
$$p(\Theta_2)=-2\sum_{i\in \tilde{E}}y(a(i),i)c_{a(i)b(i)}\left(\sum_{j\in E_i}\frac{(\alpha_{b(i)},\alpha_{\beta_i(j)})}{(\alpha_{\beta_i(j)},\alpha_{\beta_i(j)})}Q_{\beta_i(j)b(i)}\right)$$
where $E_i=\{b(i)\stackrel{j}{\raw}\beta_i(j)\}$ is the set of arrows in $\mathcal{I}_w$ starting in $b(i)$. It is easy to see that all the terms in $p(\Theta_1)$ and $p(\Theta_2)$ are linearly independent, whence $p(\Theta_1)+p(\Theta_2)=0$ if and only if all their terms vanish.
Recall that $y(a(i),i)c_{a(i)b(i)}<0$ for all $i\in \tilde{E}$. Hence $p(\Theta_1)+p(\Theta_2)=0$ forces $E_i=\emptyset$ for any $i\in \tilde{E}$. But this is a contradiction because there are no sinks in $D$, whence $U=\emptyset$ and $C=D$.
\end{proof}

\begin{lemma}\label{corbase}
Let $C$ be a closed and connected subset of $S$. Assume that there are no sinks in $C$. Then $NS(C)\cug\frg_1$ or $NS(C)\cug\frg_2$.
\end{lemma}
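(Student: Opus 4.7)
Write $V_i := \pi_i(NS(C))$ for $i = 1, 2$; by Proposition~\ref{split} applied to $C$ (with $\calX_C$ in place of $\calX$), one has $NS(C) = V_1 \oplus V_2$. The plan is to reduce the statement to Lemma~\ref{casobase} by exhibiting a non-empty closed subset $D \subseteq C$ with $NS(D) = V_1$. Once such a $D$ is available, sinklessness of $D$ is inherited from $C$, and Lemma~\ref{casobase} forces $D = C$, whence $V_1 = NS(C)$ and $V_2 = 0$, i.e. $NS(C) \subseteq \frg_1$ (the symmetric case $V_1 = 0$ gives $NS(C) \subseteq \frg_2$). So the task reduces to: assuming $V_1 \neq 0$, produce such a $D$.

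The central input is the bidegree $(2,2)$-component of the relation $\calX_C \in K$. Since $\frg_1$ acts only on the first factor of $IH_w = IH_w^{\bullet,0} \otimes IH_w^{0,\bullet}$ and $\frg_2$ only on the second, the induced map $V_1 \otimes V_2 \hookrightarrow \End(IH_w^{\bullet,0}) \otimes \End(IH_w^{0,\bullet}) \hookrightarrow \End(IH_w)$ is faithful, so extracting the $(2,2)$-part of the identity $\calX_C = 0$ gives
\[
\sum_{s, t \in C} c_{st}\, L_s \otimes R_t = 0 \quad \text{in } V_1 \otimes V_2,
\]
with $L_s := \pi_1(Q_s)$ and $R_t := \pi_2(Q_t)$; an analogous identity holds for every closed $D' \subseteq C$ via $\calX_{D'}$. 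I would then proceed by induction on $|C|$: the inductive hypothesis applied to any proper closed connected sinkless $D' \subsetneq C$ gives $NS(D') \subseteq \frg_1$ or $NS(D') \subseteq \frg_2$. For each $s \in C$, the forward closure of $\{s\}$ along the arrows of $\mathcal{I}_w$ is closed, Dynkin-connected, and sinkless (sinklessness passes from $C$ because $C$ is closed), so the inductive hypothesis applies to it whenever it is properly contained in $C$. This yields a ``type'' assignment ($\frg_1$ or $\frg_2$) to every forward closure.

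Combining these local types with the $(2,2)$-orthogonality relations above, the plan is to show that the set $D := \{s \in C : R_s = 0\}$ is closed, non-empty, and satisfies $NS(D) = V_1$. Applying Lemma~\ref{casobase} to this $D$ then yields the contradiction to $V_2 \neq 0$. The strongly connected case, where the forward closure of every $s \in C$ already equals $C$ so that the induction provides no smaller piece to leverage, would be handled separately, directly from the $(2,2)$-equation combined with the $(4,0)$ and $(0,4)$-components of $\calX_C$ acting as zero on $IH_w^{\bullet,0}$ and $IH_w^{0,\bullet}$ respectively.

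The main technical obstacle is the identification $V_1 = NS(D)$: a priori $V_1$ might be spanned by genuine mixtures of the $Q_s$'s, so that no subset of $\{Q_s\}_{s \in C}$ forms a basis of $V_1$ (as the toy example $V_1 = \Span(Q_1 + Q_2)$ in a two-vertex $C$ shows, where neither $Q_1$ nor $Q_2$ lies in $V_1$). Ruling this out is exactly where the hypotheses enter in an essential way: sinklessness prevents any vertex from being a ``boundary'' at which a genuinely mixed component can appear without further obstruction, while connectedness of $\mathcal{I}_w|_C$ propagates the type assignment through the Dynkin edges, so that a nontrivially mixed $V_1$ would force the orthogonality relation to fail along some edge of $C$.
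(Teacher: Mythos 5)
Your overall strategy (induction on $\#C$, and reduction to Lemma \ref{casobase} by producing a non-empty closed $D\cug C$ with $NS(D)=\pi_1(NS(C))$) is the same as the paper's, and the $(2,2)$-relation $\sum_{s,t\in C}c_{st}\,L_s\otimes R_t=0$ extracted from $\calX_C$ is indeed a correct ingredient. But there is a genuine gap exactly at the point you yourself flag as the ``main technical obstacle'': nothing in the proposal actually produces the closed subset $D$. Your candidate $D=\{s\in C\mid R_s=0\}$ is not shown to be closed, non-empty, or to span $\pi_1(NS(C))$, and the $(2,2)$-relation alone cannot deliver this: it only says that $\pi_1(NS(C))$ and $\pi_2(NS(C))$ are paired to zero by the matrix $(c_{st})$, which is perfectly compatible with both projections being ``mixed'' combinations of the $Q_s$ (and in the paper's own example $w=usts$ one of the two factors genuinely is mixed). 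The concluding sentence (``sinklessness prevents\dots, connectedness propagates\dots'') restates what has to be proved rather than proving it, and the ``strongly connected'' case, where your forward-closure induction has nothing to feed on, is deferred without any argument.

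What the paper uses to close this gap, and what is absent from your plan, is the structure of the kernel $K_C=\Ker(\sym^2(NS(C))\raw H^4_w)$: by the two preceding lemmas $K_C$ is spanned by the elements $\calX_{D'}$ with $D'\cug C$ closed, so $\dim K_C=\dim K_D+1$ for a maximal proper closed subset $D\cug C$. The induction hypothesis is applied to the connected components of this $D$ (each closed, connected and sinkless), splitting $D=D_L\cup D_R$ with $NS(D_L)\cug\frg_1$, $NS(D_R)\cug\frg_2$. Since $NS(C)$ splits, $K_C$ splits into bidegrees with $K_C^{2,2}=0$ (because $R^{2,0}\otimes R^{0,2}$ maps isomorphically onto $H^{2,2}_w$), and the dimension count forces $K_C\cap R^{4,0}=K_{D_L}$ (or the symmetric statement); hence $\calX_C\in\sym^2\bigl(NS(D_L)\oplus\pi_2(NS(C))\bigr)$, and the non-degeneracy of the Killing form (Remark \ref{nondeg}) yields $NS(D_L)=\pi_1(NS(C))$. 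This is precisely how one obtains a genuinely closed subset whose $Q_s$ span $\pi_1(NS(C))$ — the step your sketch leaves open — and it also handles your strongly connected case automatically (there $D=\emptyset$, so $\calX_C$ would have to lie in $R^{4,0}$ or $R^{0,4}$, and non-degeneracy kills one of the two projections). Without the kernel lemmas and this bidegree/dimension argument, the proposal is a plausible outline but not a proof.
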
 
\begin{proof}
We work by induction on the number of vertices in $C$. There is nothing to prove if $C=\emptyset$.

Let $D\cug C$ be a maximal proper closed subset. 
The kernel $K_C:=K\cap \sym^2(NS(C))$ is generated by $\calX_C$ and $\calX_{D'}$ with $D'\cug D$. In fact if $\tilde{D}\cug C$ is a proper closed subset and $\widetilde{D}\not\cug D$, then by 
maximality $\tilde{D}\cup D=C$ and $\displaystyle \calX_{\tilde{D}}=\calX_C-\calX_{D}+\calX_{D \cap \tilde{D}}$. In particular we have $\dim K_C=\dim K_D+1$.

By induction on the number of vertices we can subdivide $D$ into two subsets $D_L$ and $D_R$, each consisting of the union of connected components of $D$, such that $NS(D_L)\cug \frg_1$ and $NS(D_R)\cug \frg_2$.

Since $NS(C)$ splits, then $K_C$ also  splits as $K_C^{4,0}\oplus K_C^{2,2} \oplus K_C^{0,4}$ where $K_C^{i,j}=K_C\cap R^{i,j}$. 
However $K_C^{2,2}\cug K^{2,2}=0$ since $R^{2,0}\otimes R^{0,2}$ is mapped isomorphically to $H^{2,2}_w$. 
Using $\dim K_C=\dim K_D +1$ we get $K_C\cap R^{4,0}=K_{D}\cap R^{4,0}$ or $K_C\cap R^{0,4}=K_{D}\cap R^{0,4}$. We can assume $K_C\cap R^{4,0}=K_{D}\cap R^{4,0}=K_{D_L}$.

It follows that $\calX_C\in \sym^2\left(NS(D_L)\oplus \pi_2(NS(C))\right)$. Again, since $\calX_C$ is non-degenerate on $NS(C)$, we get
$NS(D_L)=\pi_1(NS(C))$.
Now we can apply Lemma \ref{casobase}: if $D_L\neq \emptyset$, then $D_L=C$, otherwise $\pi_1(NS(C))=0$ and $NS(C)\cug \frg_2$.
\end{proof}

\begin{theorem}\label{Theorem}
For $w\in W$, if the graph $\mathcal{I}_w$ is connected and without sinks, then $\frg_{NS}(w)=\mathfrak{aut}(IH_w,\phi)$.
\end{theorem}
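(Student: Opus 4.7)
The plan is to establish that $\frg_{NS}^\bbC(w)$ is a simple complex Lie algebra, so that Proposition \ref{3.7} concludes the proof. I argue by contradiction: since $\frg_{NS}(w)$ is semisimple by Proposition \ref{gss}, if it fails to be simple one can write $\frg_{NS}^\bbC(w)=\frg_1\times \frg_2$ with both factors non-trivial. By the Lemma at the start of Section 4 this yields a non-trivial tensor decomposition $IH_w=IH_w^{\bullet,0}\otimes_\bbC IH_w^{0,\bullet}$ together with projections $\pi_i:H^2_w\raw \frg_i$ such that each $\pi_i(H^2_w)$ acts faithfully on the corresponding tensor factor as a Lefschetz module.

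The decisive step is to apply Lemma \ref{corbase} with $C=S$. The subset $S$ is trivially closed (every arrow of $\mathcal{I}_w$ has both endpoints in $S$), it is connected by the hypothesis that $\mathcal{I}_w$ is connected, and it contains no sink by assumption. Hence Lemma \ref{corbase} forces $NS(S)=H^2_w\cug \frg_1$ or $H^2_w\cug \frg_2$; swapping factors if necessary, we may assume $\pi_2(H^2_w)=0$.

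The final step is to extract a contradiction from $\pi_2(H^2_w)=0$. If the zero operator is to have the Lefschetz property on $IH_w^{0,\bullet}$, then this module must be concentrated in degree $0$; but then $\frg_2=\frg(0,IH_w^{0,\bullet})=0$, and irreducibility of $IH_w^{0,\bullet}$ as a $\frg_2$-module forces $\dim IH_w^{0,\bullet}=1$, contradicting the non-triviality of the tensor decomposition. Hence $\frg_{NS}^\bbC(w)$ is simple and Proposition \ref{3.7} gives $\frg_{NS}(w)\cong \mathfrak{aut}(IH_w,\phi)$.

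The main obstacle, in retrospect, is not in this final assembly but in the combinatorial analysis that precedes it: all the work lies in Lemma \ref{casobase} and Lemma \ref{corbase}, which together translate the ``no sink'' condition on $\mathcal{I}_w$ into a rigidity statement forcing $NS(C)$ to sit entirely in one factor. At the level of Theorem \ref{Theorem} itself, the only thing that requires care is checking that $C=S$ satisfies the three hypotheses of Lemma \ref{corbase}, which is immediate from the assumptions of the theorem; the rest is simply stringing the pieces together.
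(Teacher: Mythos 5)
Your proposal is correct and follows essentially the same route as the paper: the paper's proof also consists of applying Lemma \ref{corbase} to $C=S$ (whose hypotheses hold exactly as you check) to rule out any non-trivial splitting $\frg_{NS}^\bbC(w)=\frg_1\times\frg_2$, and then invoking Proposition \ref{3.7}. Your extra paragraph extracting the explicit contradiction from $\pi_2(H^2_w)=0$ just spells out what the paper leaves implicit.
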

\begin{proof}
Applying Lemma \ref{corbase} to $C=S$ we see that any decomposition of  $\frg_{NS}^\bbC(w)$ must be trivial, hence by Proposition \ref{3.7} we get $\frg_{NS}(w)=\mathfrak{aut}(IH_w,\phi)$.
\end{proof}

\begin{example}
It is in general false that $\frg_{NS}(w)$ is simple for any connected $w$. 

Let $W$ be the Weyl group of type $A_3$ (i.e. $W=S_4$) where $S=\{s,t,u\}$. 
We consider the element $usts\in W$ whose graph $\mathcal{I}_{usts}$ is 

\[ \begin{tikzpicture}[auto, baseline=(current  bounding  box.center)]
      \node  (u) at (0,0) {$u$};
      \node (t) at (180:2) {$t$};
      	\node (s) at (180:4) {$s$};

      \draw [->,out=-160, in=-20, thick] (t) to (s);
 \draw [->,out=20, in=160, thick] (s) to (t);
 \draw [->, thick] (t) to (u);
         \end{tikzpicture} \]

The closed subsets in $\mathcal{I}_{usts}$ are $S$, $\{u\}$ and $\emptyset$. Then $\frg_{NS}(usts)\cong \frg_{NS}(u)\times \frg_{NS}(sts)\cong \mathfrak{sp}_2(\bbR)\times  \mathfrak{sp}_{6}(\bbR)\cong \frsl_2(\bbR)\times \mathfrak{sp}_{6}(\bbR)$. 
The splitting induced on $H^2_w$ is $$H_w^2=\pi_1(H_w^2)\oplus \pi_2(H_w^2)=\bbC Q_{u}\oplus \left(\bbC(Q_{t}-\frac23Q_{u})+\bbC(Q_{s}-\frac13Q_{u})\right).$$

We have a similar behaviour more generally: for any $w\in S_{n+1}$, with $S=\{s_1,\ldots, s_n\}$, such that $w=s_1w'$ where $w'$ is the longest element in $W_{\{s_2,\ldots,s_n\}}$ the Lie algebra $\frg_{NS}(w)$ is isomorphic to $\frsl_2(\bbR)\times \frg_{NS}(w')$.
\end{example}

\begin{example}
The following example demonstrates that having no sinks in $\mathcal{I}_w$ is not a necessary condition for the algebra $\frg_{NS}(w)$ to be simple.

Let $W$ be the Weyl group of type $B_3$, where we label the simple reflections as follows:
$$\begin{tikzpicture}[auto, baseline=(current  bounding  box.center)]
      \node  (s) at (0,0) {$s$};
      \node (t) at (1,0) {$t$};
	\node (u) at (2,0) {$u$};

\draw (0.15, 0) to (0.85, 0);
      \draw (1.15, 0.05) to (1.85, 0.05);
\draw (1.15, -0.05) to (1.85, -0.05);

\end{tikzpicture}$$
Then for $w_1=usts$ we get again  $\frg_{NS}(w_1)\cong \frg_{NS}(u)\times \frg_{NS}(sts)\cong \frsl_2(\bbR)\times \mathfrak{sp}_{6}(\bbR)$, but for $w_2=stut$ the Lie algebra $\frg_{NS}(w_2)$ is simple (hence it is isomorphic to $\mathfrak{so}_{6,6}(\bbR)$).
Notice that the graphs $\mathcal{I}_{w_1}$ and $\mathcal{I}_{w_2}$ are isomorphic.
 
\end{example}

\begin{remark}

The results given in this section work in the same way, replacing the cohomology of $X$ with the coinvariant ring $R/R_+^W$
and the intersection cohomology of Schubert variety by indecomposable Soergel modules whenever is needed, for a finite Coxeter group $W$: 
if there are not sinks in the diagram of $w\in W$ then $\frg_{NS}(w)$ is maximal, i.e. it coincides with $\mathfrak{aut}(\bar{B_w},\phi)$. 
To complete the proof one needs to generalize Proposition \ref{irrmod} in this setting. A possible way to achieve this is to extend the results in \cite{EW1} to the setting of singular Soergel bimodules \cite{W4}.

For a general Coxeter group $W$ our methods do not apply directly. In fact in general a reflection faithful representation of $W$ is not irreducible, thus Lemma \ref{kilinv} does not hold and the kernel of the map $R\raw \bar{B}_w$ seems harder to compute.
\end{remark}

\appendix

\section{Appendix: Extension of Corollary \ref{corPD} to a general Coxeter group}\label{appendix}
The goal of this Appendix is to extend Corollary \ref{corPD} to a general Coxeter group $W$. In the general case we cannot use the geometry of the Schubert varieties to construct a graded $R$-submodule $H_w$ of $\bar{B_w}$ such that $\dim (H_w)^k=\#\{v\in W \mid v\leq w\text{ and }2\ell(v)=k+\ell(w)\}$. In this section we construct an algebraic replacement of such a module.

\subsection{A basis of the Bott-Samelson bimodule}

We use the diagrammatic notation for morphisms between Soergel bimodules from \cite{EW2}. 

For any word $\undw=s_1\ldots s_\ell$ we have the Bott-Samelson bimodule $BS(\undw)=R\otimes_{R^{s_1}}R\otimes_{R^{s_2}} R\ldots R\otimes_{R^{s_\ell}} R$ and for any $w\in W$ let $B_w$ denote the corresponding indecomposable Soergel bimodule. We have $BS(\undw)\otimes_R \mathbb{R}=\bar{BS}(\undw)$ and $B_w\otimes_{R}\mathbb{R}=\bar{B_w}$.

Let $\underline{w}=s_1s_2\ldots s_\ell$ be a (not necessarily reduced) word of length $\ell$ and $e\in\{0,1\}^{\ell}$ be a $01$-sequence. As explained in \cite[Section 2.4]{EW2}, to a $01$-sequence we can associate a sequence of elements of $\{U0,U1,D0,D1\}$. 
Let $\defect(e)$ be the defect of $e$, i.e. the number of $U0$'s minus the number of $D0$'s of $e$. We define $\Downs(e)$ to be the number of $D$'s (both $D1$'s and $D0$'s) of $e$.
We denote by $\undw^e$ the element $s_1^{e_1}s_2^{e_2}\ldots s_\ell^{e_\ell}$. We have 
\begin{equation}\label{LLdeg}
\defect(e)=\ell(\undw)-\ell(\undw^e)-2\Downs(e).
\end{equation}
For any $k$, $0\leq k\leq \ell$, let $\undw_{\leq k}=s_1s_2\ldots s_k$ and $\undw^e_{\leq k}=s_1^{e_1}s_2^{e_2}\ldots s_k^{e_k}$. We say $x\leq \undw$ if there exists $e$ such that $\undw^e=x$. 
For any element $x\in W$ we denote by $\mathcal{R}(x)$ its right descent set, i.e. $\mathcal{R}=\{s\in S\mid xs<x\}$.

\begin{lemma}\label{cansub}
Let $\undw$ be a word. For any $x\leq \undw$ there exists a unique $01$-sequence $e$ such that $\underline{w}^e=x$ and $e$ has only $U0$'s and $U1$'s. Moreover such $e$ is the unique $01$-sequence of maximal defect with $\undw^e=x$, and $\defect(e)=\ell(\undw)-\ell(x)$.
\end{lemma}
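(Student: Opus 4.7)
The plan is to observe first that all three assertions of the lemma follow from a single statement: for each $x \leq \undw$ there is a unique $01$-sequence $e$ with $\undw^e = x$ and $\Downs(e) = 0$ (i.e., with no $D$-positions, equivalently with only $U0$'s and $U1$'s). Indeed, by Equation~\eqref{LLdeg}, any $e$ with $\undw^e = x$ satisfies $\defect(e) = \ell(\undw) - \ell(x) - 2\Downs(e)$, so among such sequences the defect is maximized exactly when $\Downs(e) = 0$, and in that case $\defect(e) = \ell(\undw) - \ell(x)$. Hence existence-and-uniqueness of an all-$U$ sequence immediately yields both the stated uniqueness and the formula for the defect.

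I would prove this claim by induction on $\ell = \ell(\undw)$, with the empty word serving as the trivial base case. For the inductive step write $\undw = \undw' s_\ell$ and consider the last entry $e_\ell$. The decoration at position $\ell$ is determined by $x_{\ell-1} := \undw^e_{\leq \ell - 1}$: position $\ell$ is a $U$ iff $x_{\ell - 1} s_\ell > x_{\ell-1}$. Since $x = x_{\ell-1}$ when $e_\ell = 0$ and $x = x_{\ell-1} s_\ell$ when $e_\ell = 1$, a short case analysis forces $e_\ell = 0$ when $s_\ell \notin \mathcal{R}(x)$ and $e_\ell = 1$ when $s_\ell \in \mathcal{R}(x)$: in the opposite choice the required $U$ condition at position $\ell$ becomes $x s_\ell > x$ exactly when $x s_\ell < x$, a contradiction. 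In either case, the problem reduces to finding a unique all-$U$ sequence $e'$ for the word $\undw'$ with target $x' = x$ (first case) or $x' = x s_\ell$ (second case).

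To apply the induction hypothesis I need $x' \leq \undw'$. This follows from the Bruhat subword property applied to any witness $f$ for $x \leq \undw$: if $f_\ell = 0$ then $\undw'^{f'} = x$, while if $f_\ell = 1$ then $\undw'^{f'} = xs_\ell$; combining with $x \leq xs_\ell$ (when $xs_\ell > x$) or $xs_\ell \leq x$ (when $xs_\ell < x$) gives $x' \leq \undw'$ in every case. The induction hypothesis then produces the unique $e'$, and appending the forced value $e_\ell$ produces the unique all-$U$ sequence $e$ for $\undw$ and $x$.

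The only non-routine ingredient is the brief case analysis at the last position, combined with the standard Bruhat subword verification; everything else is bookkeeping from Equation~\eqref{LLdeg}. I therefore do not expect any serious obstacle.
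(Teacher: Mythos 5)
Your proof is correct and takes essentially the same route as the paper's: both determine the sequence from right to left, forcing each entry by whether the corresponding simple reflection lies in $\mathcal{R}(x_k)$, and both deduce the maximal-defect statement from \eqref{LLdeg}. The only difference is presentational: you fold existence and uniqueness into one induction and spell out the Bruhat-order step ($x'\leq \undw_{\leq \ell-1}$) that the paper leaves implicit in ``at any step we have $x_k\leq \undw_{\leq k}$.''
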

\begin{proof}
We first prove the existence. Let $\underline{w}=s_1\ldots s_\ell$. We start with $x_\ell=x$ and we define recursively, starting with $k=l$ and down to $k=1$,
$$e_k=\begin{cases}
1 & \text{ if }s_k\in \mathcal{R}(x_k)\\
0 & \text{ if }s_k\not \in \mathcal{R}(x_k)
\end{cases},\qquad x_{k-1}=x_k\cdot s_k^{e_k}$$
It follows that $s_{k}\not\in \mathcal{R}(x_{k-1})$ for any $1\leq k\leq \ell$, hence $e$ has only $U1's$ and $U0's$. At any step we have $x_k\leq \undw_{\leq k}$, therefore $x_0=\Iden$ and $\undw^e=x$. 

To show the uniqueness, assume that there are two $01$-sequences $e$ and $f$ with only $U$'s and satisfying $\undw^e=x=\undw^{f}$.
If $e_\ell=f_\ell$ we can conclude that $e=f$ by induction on $\ell$. Otherwise we can assume $e_\ell=1$ and $f_\ell=0$. Now we get $\undw_{\leq \ell -1}^{f}=x$, and $xs_\ell<x$ because the last bit of $e$ is a $U1$. But this means that the last bit of $f$ is a $D0$, hence we get a contradiction.

The last statement follows from \eqref{LLdeg}.
\end{proof}

\begin{definition}
Let $\undw$ be a word and $x\leq \undw$. We call the unique $01$-sequence $e$  without $D$'s such that $\undw^e=x$ the \textit{canonical} sequence for $x$.
\end{definition}

In \cite[Chapter 6]{EW2} Libedinsky's Light Leaves are introduced in the diagrammatic setting. We make use of Elias and Williamson's results.

Let $\undw$ a word and $e$ a $01$-sequence with $\undw^e=x$. The Light Leaf $LL_{\underline{w},e}$ is an element in $\Hom(BS(\underline{w}),BS(\underline{x}))$, for some choice of a reduced expression $\underline{x}$ of $x$. 
For any light leaf $LL_{\undw,e}$, let $\flipLL_{\undw,e}\in \Hom(BS(\underline{x}),BS(\underline{w}))$ be the morphism obtained by flipping the diagram of $LL_{\undw,e}$ upside down. If $\undw^e=\undw^f$ let $\bbLL_{\underline{w},e,f}=\flipLL_{\undw,e}\circ LL_{\undw,f}$. We know from \cite[Theorem 6.11]{EW2} that  the set $\{\bbLL_{\underline{w},e,f}\}_{\undw^e=\undw^f}$ is a basis of $\End(BS(\underline{w}))$ as a right $R$-module.

Let $ll_{\underline{w},e}=\flipLL_{\underline{w},e}(1^\otimes_{\underline{x}})$, where $1^\otimes_{\underline{x}}=1\otimes 1\otimes\ldots\otimes 1\in BS(\underline{x})$. 
We have $\deg(ll_{\undw,e})=-\ell(\undw^e)+\defect(e)$. 
In particular $e$ is a canonical $01$-sequence if and only if $\deg(ll_{\undw,e})+2\ell(\undw^e)=\ell(\undw)$. If there is at least one $D$ in $e$ then the inequality $\deg(ll_{\undw,e})+2\ell(\undw^e)\leq \ell(\undw)-2$ holds.

\begin{lemma}\label{D=0}
Let $\undw$ be a word and $e$ be a $01$-sequence. Then
$$LL_{\undw,e}(1^\otimes_\undw)=
\begin{cases}1^\otimes_{\underline{x}} & \text{ if } e\text{ has only $U$'s}\\0 & \text{ if } e\text{ has (at least) one }D.\end{cases}$$
\end{lemma}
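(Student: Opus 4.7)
The plan is to combine a degree-counting argument (which immediately handles the case where $e$ contains a D) with a short induction on $\ell(\undw)$ (which handles the U-only case and pins down the scalar).

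First I would compute the degree of the output. From \eqref{LLdeg} and the grading conventions used in the excerpt, $\deg(1^\otimes_\undw)=-\ell(\undw)$ and $\deg(LL_{\undw,e})=\defect(e)$, so $LL_{\undw,e}(1^\otimes_\undw)\in BS(\underline{x})$ sits in degree
$$-\ell(\undw)+\defect(e) \;=\; -\ell(\underline{x})-2\Downs(e).$$
Since $\underline{x}$ is \emph{reduced}, the product formula for graded ranks presents the graded character of $BS(\underline{x})$ as $(v+v^{-1})^{\ell(x)}$ times that of $R$; hence the lowest degree appearing in $BS(\underline{x})$ is $-\ell(\underline{x})$ and the corresponding graded piece is one-dimensional, spanned by $1^\otimes_{\underline{x}}$. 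Therefore if $\Downs(e)\geq 1$ the output sits strictly below degree $-\ell(x)$ and must vanish, which settles the D case. If $\Downs(e)=0$, the output must equal $c\cdot 1^\otimes_{\underline{x}}$ for some scalar $c\in\bbR$, and it remains to show $c=1$.

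To show $c=1$ I would induct on $\ell(\undw)$. The empty-word base case is trivial. For the inductive step write $\undw=\undw's$ and $e=(e',e_\ell)$, where by assumption both $e'$ and the last bit are U-type. By the recursive construction of light leaves in \cite{EW2}, $LL_{\undw,e}=\psi\circ (LL_{\undw',e'}\otimes\text{id}_{B_s})$, where $\psi$ is the identity on $BS(\underline{x'})\otimes B_s=BS(\underline{x})$ when $e_\ell$ is a U1, and $\psi=\text{id}_{BS(\underline{x'})}\otimes\eta$ (with $\eta\colon B_s\to R$ the counit sending $1\otimes 1\mapsto 1$) when $e_\ell$ is a U0. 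By the inductive hypothesis $(LL_{\undw',e'}\otimes\text{id})(1^\otimes_\undw)=1^\otimes_{\underline{x'}}\otimes(1\otimes 1)$, and in either subcase applying $\psi$ yields exactly $1^\otimes_{\underline{x}}$.

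The only delicate point is the minimum-degree claim for $BS(\underline{x})$: one must verify that when $\underline{x}$ is reduced, the bottom graded piece is one-dimensional and spanned by $1^\otimes_{\underline{x}}$. Once this is secured, the degree inequality does almost all of the work and the remaining verification is routine bookkeeping.
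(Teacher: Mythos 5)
Your proof is correct, and for the case where $e$ contains a $D$ it takes a genuinely different route from the paper. The paper argues diagrammatically: it reduces by induction to a single $D$ sitting at the right end of $e$, notes that the intermediate braid consists of $2m_{st}$-valent vertices which preserve $1\otimes 1\otimes\cdots\otimes 1$, and then uses that a trivalent vertex applied to $1\otimes 1\otimes 1$ gives $0$. You instead use pure degree bookkeeping: since $LL_{\undw,e}$ is homogeneous of degree $\defect(e)$, the image of $1^\otimes_\undw$ sits in degree $-\ell(\undw)+\defect(e)=-\ell(x)-2\Downs(e)$ by \eqref{LLdeg}, while $BS(\underline{x})$ is a graded free right $R$-module whose unique bottom-degree basis element is $1^\otimes_{\underline{x}}$ in degree $-\ell(\underline{x})=-\ell(x)$ (reducedness of $\underline{x}$ is only needed for this last equality, not for the rank formula). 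So $\Downs(e)\geq 1$ forces the image to vanish, and $\Downs(e)=0$ forces it to be a scalar multiple of $1^\otimes_{\underline{x}}$. This is cleaner than the paper's argument in that it needs no analysis of the diagram at all, and in particular it bypasses the (true but not completely obvious) assertion that braid moves fix $1^\otimes$, which the paper uses without proof; what it does not give by itself is the scalar, which you then fix by induction, essentially the paper's ``follows from the definitions'' for the all-$U$ case. The one point to patch in that induction is the $U1$ step: in the Elias--Williamson recursion the map $\psi$ is not literally the identity but a chosen rex (braid) move from $\underline{x'}s$ to the fixed reduced expression $\underline{x}$ of $x$, so either make the choice $\underline{x}=\underline{x'}s$ (legitimate, since $\underline{x'}s$ is reduced when the step is $U1$), or invoke exactly the fact the paper states, namely that $2m_{st}$-valent vertices send $1^\otimes$ to $1^\otimes$. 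With that remark added, your argument is sound and complete.
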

\begin{proof}
The statement  easily follows from the definitions when $e$ has only $U$'s. By induction on $\ell(\undw)$ we can assume that $e$ has only one $D$ at the right end. Then $LL_{\undw,e}$ looks like
 
 $$ \begin{tikzpicture}[scale=0.5]thick, 
\draw (-0.5,0) rectangle (3.5,2); 
\draw (-0.5,2.5) rectangle (3.5,3.5);
\draw (-0.5,4.5) rectangle (4,5.5);
\node[scale=0.8] at (1.5,1) {$LL_{\undw_{\leq k-1}, e_{\leq k-1}}$}; 
\node at (1.5,3) {braid};
\node at (1.75,5) {braid};
\foreach \x in {0,0.5,3} \draw(\x,-0.5) -- (\x,0);
\draw[red] (1,-0.5) -- (1,0);
\foreach \x in {0,1,3} \draw(\x,2) -- (\x,2.5);
\draw[red] (0.5,2) -- (0.5,2.5);
\foreach \x in {0,0.5,1} \draw(\x,3.5) -- (\x,4.5);
\foreach \x in {0,0.5,1} \draw(\x,5.5) -- (\x,6);
\draw[red] (3.5,5.5) to (3.5,6);
\node at (2,-0.3) {\dots};
\node at (2,2.2) {\dots};
\node at (2, 4) {\dots};
\draw[red] (4,-0.5) to (4,3.5) to [out=90, in=-30] (3.5, 4) to [out=-150, in=90] (3,3.5) ;
\draw[red] (3.5,4) to (3.5,4.5);
\end{tikzpicture} $$

The box labelled by ``braid'' contains only $2m_{st}$-valent vertices. By induction $$\left(LL_{\undw_{\leq k-1}, e_{\leq k-1}}\otimes \Iden_{B_{s_{\ell(\undw)}}}\right)\left(1_{\undw}^\otimes\right)=1_{\underline{x}}^\otimes.$$

Notice that every $2m_{st}$-valent vertex preserves $1\otimes 1\otimes \ldots 1$. It follows from the definition that a trivalent vertex applied to $1\otimes 1\otimes 1$ returns $0$, thus $LL_{\undw,e}(1^\otimes_\undw)=0$.
\end{proof}

\begin{cor}
Let $\undw$ be a word.
The set $\{ll_{\underline{w},e}\}$ with $e\in\{0,1\}^{\ell(\undw)}$ is a basis of $BS(\underline{w})$ as a right $R$-module.
\end{cor}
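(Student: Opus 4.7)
The plan is to reduce the basis statement to a statement about endomorphisms, using Lemma \ref{D=0} together with the double leaves basis $\{\bbLL_{\undw,e,f}\}_{\undw^e=\undw^f}$ of $\End(BS(\undw))$ (\cite[Theorem 6.11]{EW2}). I would consider the evaluation map
\[
\Phi\colon \End(BS(\undw)) \longrightarrow BS(\undw), \qquad \varphi \longmapsto \varphi(1^\otimes_\undw),
\]
which is a homomorphism of right $R$-modules when $\End(BS(\undw))$ is equipped with its natural right $R$-module structure $(\varphi\cdot r)(x) = \varphi(x)\cdot r$.

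Evaluating $\Phi$ on a double leaf $\bbLL_{\undw,e,f} = \flipLL_{\undw,e}\circ LL_{\undw,f}$ and applying Lemma \ref{D=0}, one obtains
\[
\Phi(\bbLL_{\undw,e,f}) = \flipLL_{\undw,e}\bigl(LL_{\undw,f}(1^\otimes_\undw)\bigr) = \begin{cases} ll_{\undw,e} & \text{if $f$ is the canonical sequence of $\undw^e$,} \\ 0 & \text{otherwise.} \end{cases}
\]
By Lemma \ref{cansub}, each $e$ pairs with a unique canonical $f$. Hence the image of $\Phi$ equals the right $R$-span of $\{ll_{\undw,e}\}_{e \in \{0,1\}^{\ell(\undw)}}$.

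Next, I would prove $\Phi$ is surjective by induction on $\ell(\undw)$. The base case $\ell(\undw)=0$ is clear since $BS(\undw)=R$. For the inductive step with $\undw = \undw'\cdot s$, write $BS(\undw) = BS(\undw') \otimes_R B_s$; by the inductive hypothesis, every $b'\in BS(\undw')$ equals $\varphi'(1^\otimes_{\undw'})$ for some bimodule endomorphism $\varphi'$. For $B_s$, a direct computation shows that the identity and the ``dot--dot'' endomorphism $\eta\circ\epsilon\colon B_s\to R\to B_s$ evaluate at $1\otimes 1$ to a right $R$-basis of $B_s$ (since $\eta\circ\epsilon(1\otimes 1)$ is a non-trivial combination of $1\otimes 1$ and $\alpha_s\otimes 1$). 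Tensoring yields a bimodule endomorphism $\varphi'\otimes\psi$ of $BS(\undw)$ whose value at $1^\otimes_\undw$ equals $b'\otimes\psi(1\otimes 1)$, and such tensor products span every element of $BS(\undw)$.

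Combining the two parts, $\{ll_{\undw,e}\}_e$ spans the free right $R$-module $BS(\undw)$; since $|\{ll_{\undw,e}\}_e|=2^{\ell(\undw)}$ coincides with the rank of $BS(\undw)$, it must be a basis, a surjection between free modules of equal finite rank over a commutative Noetherian ring being necessarily an isomorphism. As a sanity check, the identity $\sum_e v^{\deg ll_{\undw,e}} = (v+v^{-1})^{\ell(\undw)}$ (each position $i$ contributes $v^{\pm 1}$ according to whether the corresponding bit of $e$ is $U0/D1$ or $U1/D0$) matches the graded rank of $BS(\undw)$ over $R$. The main obstacle is the surjectivity of $\Phi$; the key input there is the existence of a bimodule endomorphism of $B_s$ whose value at $1\otimes 1$ is not a scalar multiple of $1\otimes 1$, which is standard in the Soergel-bimodule setup.
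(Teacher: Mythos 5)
Your proposal is correct and takes essentially the same route as the paper's proof: both evaluate the double-leaves basis of $\End(BS(\undw))$ at $1^\otimes_\undw$, use Lemma \ref{D=0} together with the uniqueness of canonical sequences to identify the surviving evaluations with the $ll_{\undw,e}$, show that evaluation at $1^\otimes_\undw$ hits a right $R$-generating set via dot endomorphisms (the paper writes down the explicit maps $\phi_e$ all at once, whereas you assemble the same maps inductively one strand at a time), and conclude by comparing with the rank $2^{\ell(\undw)}$ of the free module $BS(\undw)$.
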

\begin{proof}

Let $\undw=s_1s_2\ldots s_\ell$ with $\ell=\ell(w)$.
We first show that the span of $\{\phi(1^\otimes_{\undw})\}$ with $\phi\in\End(BS(\underline{w}))$ generates $BS(\underline{w})$. Then for example one could consider the morphisms $\phi_e$, for any $e\in\{0,1\}^\ell$, defined by:
 $$ \begin{tikzpicture}[scale=0.5]thick, 
\foreach \x in {0,1.5,3,4.5,8,14,19,21} {
\draw(\x,1.25) -- (\x,0); 
\draw(\x,2.25) -- (\x,3.5);
\node[circle,fill,draw,inner sep=0mm,minimum size=1mm] at (\x,1.25) {};
\node[circle,fill,draw,inner sep=0mm,minimum size=1mm] at (\x,2.25) {};}
\foreach \x in {1,2,3,4} \node at (\x*1.5-1,3.5) {$e_\x$};
\node at (8.5,3.5) {$e_\ell$};
\node at (14.5,3.5) {$0$};
\node at (19.5,3.5) {$1$};
\draw(16,0) -- (16,3.5);
\node at (6.25,1.75) {$\cdots$};
\node at (11,1.75) {where};
\node at (17.5,1.75) {and};
\node at (15,1.75) {$:=$};
\node at (20,1.75) {$:=$};
\node at (-2,1.75) {$\phi_e:=$};
\end{tikzpicture}
$$
We have $\phi_e(1^\otimes_{\undw})=\alpha_{s_1}^{e_1}\otimes \alpha_{s_2}^{e_2}\otimes\ldots \otimes \alpha_{s_\ell}^{e_\ell}$. Since the set $\{\phi_e(1^\otimes_{\undw})\}_{e\in \{0,1\}^\ell}$ is a basis of $BS(\undw)$ as a right $R$-module, the claim follows.

Then clearly also the span of all the ${\bbLL_{\underline{w},e,f}(1^\otimes_\undw)}$ with $\undw^e=\undw^f$ generates $BS(\undw)$.
Applying Lemma \ref{D=0} we see that ${\bbLL_{\underline{w},e,f}(1^\otimes_\undw)}=ll_{\undw,e}$ if $f$ is canonical and $0$ otherwise. It follows that
$\{ll_{\underline{w},e}\}_{e\in\{0,1\}^\ell}$ spans $BS(\undw)$. Since the rank of $BS(\undw)$ as a right $R$-module is $2^{\ell(w)}$ the thesis follows, cf. \cite[Theorem 2.4]{Mat}.
\end{proof}

\begin{remark}
The result of this section are, at least in my knowledge, still unpublished. However Geordie Williamson and Ben Elias explained canonical subexpression and how to construct the basis $\{ll_{\undw,e}\}$ in a master class at the QGM in Aarhus already in 2013. Videos and notes of the lectures are available at \url{http://qgm.au.dk/video/mc/soergelkl/}.
\end{remark}

\subsection{The ``homology'' submodule of an indecomposable Soergel module}

Recall from \cite[Section 3.5]{EW1} that for any Soergel bimodule $B$ we have 
$$\Gamma_{\leq x}B/\Gamma_{< x}B\cong \nabla_x^{\oplus h_x(B)}\text{ with }h_x(B)\in \mathbb{Z}[v,v^{-1}]$$ where $\nabla_x=R_x[\ell(x)]$ is a shift of the standard bimodule $R_x$ and $v$ denotes the degree shift. In particular, if $BS(\undw)$ is a Bott-Samelson bimodule then $h_x(BS(\undw))=\sum_{e\colon \undw^e=x}v^{\defect(e)}$, while if $B_w$ is an indecomposable bimodule, then $h_x(B_w)$ is equal to the  polynomial $h_{x,w}(v)$. The polynomials $h_{x,w}(v)$ are related to the usual Kazhdan-Lusztig polynomials via the formula $h_{x,w}(v)=v^{\ell(w)-\ell(x)}p_{x,w}(v^{-2})$. In particular $h_{x,w}\in \mathbb{Z}[v]$ and $h_{x,w}=v^{\ell(w)-\ell(x)}+$``lower terms,'' for any $x\leq w$.

The basis $\{ll_{\undw,e}\}$ is compatible both with the filtration support and with the degree grading of $BS(\undw)$. In other words, for any $x$ and any $k\in \mathbb{Z}_{\geq 0}$,  the set $\{ll_{\undw,e} \mid \undw^e=x, \defect(e)=k\}$ induces a basis on the summand $\nabla_x[k]^{\oplus c_k}\cug \Gamma_{\leq x}BS(\undw)/\Gamma_{< x}BS(\undw)$, where $c_k$ is the coefficient of $v^k$ in $h_x(BS(\undw))$.

Let's consider the following right $R$-submodules of $BS(\undw)$:
$$C_w=\sum_{e\text{ canonical}} ll_{\undw,e}R\qquad\text{ and }\qquad D_w=\sum_{e\text{ not canonical}}ll_{\undw,e}R.$$
In general $C_w$ it is not a left $R$-module.

\begin{lemma}
Let $D_w$ as above. Then $D_w$ is a $R$-subbimodule of $BS(\undw)$.
\end{lemma}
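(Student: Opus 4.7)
The plan is to show that $r\cdot ll_{\undw,e}\in D_w$ for every non-canonical $e$ and every $r\in R$. Expanding $r\cdot ll_{\undw,e}=\sum_f ll_{\undw,f}\,r_f$ in the right $R$-basis $\{ll_{\undw,f}\}$, the goal becomes $r_f=0$ whenever $f$ is canonical. Since $R$ is generated as an $\bbR$-algebra in degrees $0$ and $2$, an induction on $\deg(r)$ reduces us to $r\in R^2=\hgotd$; the case $r\in\bbR$ is immediate.

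Two observations rapidly narrow down which $r_f$ can be nonzero. First, the filtration $F_{\leq y}:=\Gamma_{\leq y}BS(\undw)$ is by sub-bimodules and is compatible with the light leaves basis (so that $\{ll_{\undw,g}\}_{\undw^g\leq y}$ is a right $R$-basis of $F_{\leq y}$), hence $r_f=0$ unless $\undw^f\leq \undw^e$. Second, for $r\in R^2$ and canonical $f$ with $\undw^f=y\leq \undw^e=x$, the formula $\deg(ll_{\undw,g})=-\ell(\undw^g)+\defect(g)$ together with the identity $\defect(g)=\ell(\undw)-\ell(\undw^g)-2\Downs(g)$ yields
\[
\deg(r_f)=2\bigl(\ell(y)-\ell(x)+1-\Downs(e)\bigr).
\]
Since $r_f$ has non-negative degree, $\ell(y)\leq \ell(x)$, and $\Downs(e)\geq 1$ by non-canonicity of $e$, the only case left is $y=x$, $\Downs(e)=1$, with $r_f\in\bbR$.

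To dispose of this last case, I would pass to the subquotient $F_{\leq x}/F_{<x}$. By the compatibility of the light leaves basis with the $\nabla$-decomposition recalled in the excerpt (following \cite[Chapter~6]{EW2}), one has $F_{\leq x}/F_{<x}\cong \bigoplus_k \nabla_x[k]^{\oplus c_k}$ as bimodules, each summand $\nabla_x[k]^{\oplus c_k}$ being a sub-bimodule whose multiplicity space is spanned by the images $\overline{ll_{\undw,g}}$ with $\undw^g=x$ and $\defect(g)=k$. Let $\tilde f$ be the unique canonical 01-sequence with $\undw^{\tilde f}=x$; then $c_{k_{\max}}=1$ for $k_{\max}:=\ell(\undw)-\ell(x)$, and $\overline{ll_{\undw,\tilde f}}$ generates $\nabla_x[k_{\max}]$. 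The ``non-canonical part'' $N_x:=\bigoplus_{k<k_{\max}}\nabla_x[k]^{\oplus c_k}$ is a sub-bimodule that contains every $\overline{ll_{\undw,g}}$ for non-canonical $g$ with $\undw^g=x$, and as a sub-bimodule is preserved by left multiplication. Hence $r\cdot\overline{ll_{\undw,e}}\in N_x$, and the identity
\[
r\cdot\overline{ll_{\undw,e}}-\sum_{g}\overline{ll_{\undw,g}}\,r_g=\overline{ll_{\undw,\tilde f}}\,r_f
\]
(with the sum running over non-canonical $g$ satisfying $\undw^g=x$) exhibits an element lying simultaneously in $N_x$ and in $\nabla_x[k_{\max}]$; since $N_x\cap \nabla_x[k_{\max}]=0$, this forces $r_f=0$.

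The main obstacle is the bimodule-level compatibility of the light leaves basis with the $\nabla$-filtration of $F_{\leq x}/F_{<x}$ — specifically, that $N_x$ is itself a sub-bimodule (not merely a right $R$-submodule) and that each $\overline{ll_{\undw,g}}$ sits in the summand indexed by $\defect(g)$. This is a finer property of the Libedinsky/Elias--Williamson construction than the generic existence of a $\nabla$-filtration, and is the step that has to be invoked with care.
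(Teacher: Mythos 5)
Your argument is correct, and its skeleton -- expand $r\cdot ll_{\undw,e}$ in the light-leaves basis, kill the canonical coefficients by a homogeneity count, and treat the top-support term separately -- is essentially the paper's; the difference is how the top term is controlled. The paper stays inside $BS(\undw)$ and applies the nil-Hecke relation to write $r\cdot ll_{\undw,e}=ll_{\undw,e}\cdot x^{-1}(r)+\Theta$ with $\Theta\in\Gamma_{<x}BS(\undw)$, and then the same degree inequality you use (your $\deg(r_f)=2\bigl(\ell(y)-\ell(x)+1-\Downs(e)\bigr)\geq 0$ is the paper's estimate $\deg(ll_{\undw,f_i})\leq\deg(ll_{\undw,e})+2<\ell(\undw)-2\ell(\undw^{f_i})$ rearranged) shows that $\Theta$ expands in non-canonical light leaves only. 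You instead work in the subquotient $\Gamma_{\leq x}BS(\undw)/\Gamma_{<x}BS(\undw)$, which buys the same leading-term statement without any explicit diagrammatic relation.

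The obstacle you flag at the end is not actually one. The subquotient is, as a bimodule, a direct sum of shifts of the standard bimodule $\nabla_x$, on which the left action of $r$ coincides with the right action of $x^{-1}(r)$; consequently every right $R$-submodule of the subquotient is automatically a sub-bimodule, so $N_x$ being a sub-bimodule is free, and you never need the finer claim that the image of $ll_{\undw,g}$ lies in the summand $\nabla_x[\defect(g)]$ of a chosen decomposition. All you need is what the paper recalls just before the lemma, namely that the images of the $ll_{\undw,g}$ with $\undw^g=x$ form a right $R$-basis of the subquotient. With this observation your third step collapses: the image of $r\cdot ll_{\undw,e}$ in the subquotient equals $\bar{ll}_{\undw,e}\cdot x^{-1}(r)$, whose coefficient on $\bar{ll}_{\undw,\tilde{f}}$ is visibly zero since $e\neq\tilde{f}$, and hence $r_{\tilde{f}}=0$. (Your step 1 uses that $\{ll_{\undw,g}\}_{\undw^g\leq x}$ is a basis of $\Gamma_{\leq x}BS(\undw)$; the paper uses the same compatibility implicitly when it expands $\Theta\in\Gamma_{<x}BS(\undw)$ in light leaves indexed by $\undw^{f_i}<x$, so you are on equal footing there.)
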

\begin{proof}
It suffices to show that, for any non-canonical $e$ and for any $f\in R$, we have $f\cdot ll_{\undw,e}=\sum_{i} ll_{\undw,e_i}g_i$, with  $e_i$ not canonical and $g_i\in R$. Since $R$ is generated in degree $2$ we can assume $f$ to be homogeneous of degree $2$.

Let $x=\undw^e$. The element $f\cdot ll_{\undw,e}$ is contained in $\Gamma_{\leq x}(BS(\undw))$. Using repeatedly the nil-Hecke relation \cite[(5.2)]{EW2}
on the bottom of the diagram we see that 
\begin{equation}\label{nilHecke}
f\cdot ll_{\undw,e}=ll_{\undw,e}\cdot x^{-1}(f) +\Theta,\end{equation}
with $\Theta\in \Gamma_{<x}(BS(\undw))$. 

Therefore we can write $\Theta=\sum_i ll_{\undw,f_i}h_i$, with $h_i\in R$ and $\undw^{f_i}<x$. Furthermore, since the equation \eqref{nilHecke} is homogeneous, if $h_i\neq 0$ we have 
$\deg(h_i)+\deg(ll_{\undw,f_i})=\deg(f)+\deg(ll_{\undw,e})=\deg(ll_{\undw,e})+2$ for all $i$, whence 
$$\deg(ll_{\undw,f_i})\leq \deg(ll_{\undw,e})+2\leq \ell(\undw)-2\ell(x)<\ell(\undw)-2\ell(\undw^{f_i})$$
and $f_i$ must be not canonical.
\end{proof}

Let now $\undw$ be a reduced word. Fix a decomposition of $BS(\undw)$ into indecomposable bimodules and let $E_w\in \End(BS(\undw))$ be the primitive idempotent corresponding to $B_w$, i.e. $BS(\undw)=\Ker(E_w)\oplus \ima(E_w)$ and $\ima(E_w)\cong B_w$. Since, for any $x$, the map 
$$\Gamma_{\leq x}BS(\undw)/\Gamma_{< x}BS(\undw)\raw \Gamma_{\leq x}B_w/\Gamma_{< x}B_w$$
induced by $E_w$ is surjective, it follows that the projection of the set $\{E_w(ll_{\undw,e}) \mid \undw^e=x, \defect(ll_{\undw,e})=k\}$ spans the summand $\nabla_x[k]^{\oplus c_k(h_{x,w})}$  of $\Gamma_{\leq x}B_w/\Gamma_{< x}B_w$, where $c_k(h_{x,w})$ is the coefficient of $v^k$ in $h_{x,w}$.

In particular, because of Lemma \ref{cansub}, 
for any $x\leq w$ the summand $\nabla_x[\ell(w)-\ell(x)]\cug \Gamma_{\leq x}BS(\undw)/\Gamma_{< x}BS(\undw)$ is spanned by $ll_{\undw,e}$, where $e$ is the canonical sequence for $x$.
Moreover, we have $h_{x,w}=v^{\ell(w)-\ell(v)}+$``lower terms,'' hence the summand  $\nabla_x[\ell(w)-\ell(x)]\cug \Gamma_{\leq x}B_w/\Gamma_{< x}B_w$ has as a basis the projection of $\{E_w(ll_{\undw,e})\}$.
Therefore, the map
\begin{equation}\label{mapk}
\Gamma_{\leq x}BS(\undw)/\Gamma_{< x}BS(\undw)\otimes \mathbb{R}\raw \Gamma_{\leq x}B_w/\Gamma_{< x}B_w\otimes \mathbb{R}
\end{equation}
is an isomorphism in degree $v^{\ell(w)-2\ell(x)}$.

Let $\bar{C_w}=C_w\otimes_R \mathbb{R}$, $\bar{D_w}=D_w\otimes_R \mathbb{R}$ and let us denote by $\bar{E_w}: \bar{BS}(\undw)\raw \bar{B_w}$ the induced morphism of left $R$-modules. For any $e$, let $\bar{ll}_{\undw,e}$ denote the projection of $ll_{\undw,e}$ to $\bar {BS}(\undw)$.

\begin{lemma}\label{kerDw}
The kernel of $\bar{E_w}$ is contained in $\bar{D_w}$. 
\end{lemma}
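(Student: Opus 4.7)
The plan is to derive a contradiction from assuming some $v = v_C + v_D \in \Ker(\bar{E_w})$ with $v_C \in \bar{C_w}$, $v_D \in \bar{D_w}$ (using the decomposition $\bar{BS}(\undw) = \bar{C_w} \oplus \bar{D_w}$ afforded by the light leaves basis) has $v_C \neq 0$. Since $\bar{E_w}$ is degree-preserving, we may assume $v$ is homogeneous of degree $d = \ell(w) - 2k$, so
\[
v_C = \sum_{x \leq w,\ \ell(x) = k} \lambda_x \bar{ll}_{\undw, e_x}, \qquad v_D = \sum_{e \text{ non-canonical}} \mu_e \bar{ll}_{\undw, e}.
\]
By \eqref{LLdeg} and $\Downs(e) \geq 1$ for non-canonical $e$, each non-canonical term appearing in $v_D$ satisfies $\undw^e = y$ with $\ell(y) = k - \Downs(e) < k$.

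Fix $x_0$ with $\lambda_{x_0} \neq 0$ and consider the quotient $Q := \bar{B_w}/\Xi$, where $\Xi$ is the subbimodule of $\bar{B_w}$ generated by $\Gamma_{\leq y}\bar{B_w}$ for all $y \not\geq x_0$. The set $\{y \in W : y \not\geq x_0\}$ is a downward-closed subset of $W$ (so $\Xi$ is well-defined), and one checks $\Gamma_{\leq x_0}\bar{B_w} \cap \Xi = \Gamma_{<x_0}\bar{B_w}$, so the graded piece $\Gamma_{\leq x_0}\bar{B_w}/\Gamma_{<x_0}\bar{B_w} \cong \bar{\nabla_{x_0}}^{\oplus h_{x_0,w}(v)}$ embeds into $Q$. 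The top degree $v^d = v^{\ell(w)-2\ell(x_0)}$ of $Q$, which is one-dimensional (coming from the leading coefficient $1$ of $h_{x_0,w}(v)$), is entirely contained in this embedded subspace, and by \eqref{mapk} it is spanned by the image of $\bar{E_w}(\bar{ll}_{\undw, e_{x_0}})$.

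Projecting $\bar{E_w}(v) = 0$ to the top degree of $Q$, the contributions split cleanly: for $x \neq x_0$ with $\ell(x) = k$, the elements $x$ and $x_0$ are Bruhat-incomparable so $x \not\geq x_0$, whence $\bar{E_w}(\bar{ll}_{\undw, e_x}) \in \Gamma_{\leq x}\bar{B_w} \subseteq \Xi$ vanishes in $Q$; similarly each non-canonical $e$ in $v_D$ has $\undw^e \not\geq x_0$ (by length alone) and its image vanishes in $Q$. Hence only the $x_0$-term survives, giving $\lambda_{x_0} \cdot (\text{nonzero generator}) = 0$, forcing $\lambda_{x_0} = 0$, a contradiction.

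The main obstacle is the structural setup of the quotient $Q$ and the identification of its top degree: this requires restricting the standard filtration of $\bar{B_w}$ (with graded pieces $\bar{\nabla_y}^{\oplus h_{y,w}(v)}$) to the upper set $\{y : y \geq x_0\}$ and verifying the expected behavior. Once this is in place, the rest is clean bookkeeping exploiting that non-canonical light leaves necessarily land in strictly shorter Bruhat strata.
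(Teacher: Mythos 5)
Your overall strategy is sound and in fact runs parallel to the paper's: isolate a canonical term $\bar{ll}_{\undw,e_{x_0}}$ whose underlying element $x_0$ is Bruhat-maximal among the supports occurring in the given homogeneous degree, kill everything supported on elements not above $x_0$, and then use the fact that the map \eqref{mapk} is an isomorphism in degree $\ell(w)-2\ell(x_0)$ to force $\lambda_{x_0}=0$. The bookkeeping (same-length elements are incomparable; non-canonical leaves in the same degree land in strictly shorter strata by \eqref{LLdeg}; $E_w$ preserves supports) is correct.

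However, there is a genuine gap exactly at the point you flag as ``the main obstacle'' and dismiss with ``one checks'': the structural claims about $Q=\bar{B_w}/\Xi$, namely $\Gamma_{\leq x_0}\bar{B_w}\cap\Xi=\Gamma_{<x_0}\bar{B_w}$ and the one-dimensionality of $Q$ in degree $\ell(w)-2\ell(x_0)$. These are not formal. The support functor $\Gamma$ is defined on bimodules, so the subspaces $\Gamma_{\leq y}\bar{B_w}$ are only \emph{images} of $\Gamma_{\leq y}B_w\otimes_R\mathbb{R}$ in $\bar{B_w}$; since $-\otimes_R\mathbb{R}$ is merely right exact, intersections and sums of such images need not match what happens at the bimodule level, and a priori $\Xi$ could ``leak'' into the class of $E_w(ll_{\undw,e_{x_0}})$. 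To justify your claims one has to know that for an order ideal $A$ (here $A=\{y\mid y\not\geq x_0\}$) the pieces $\Gamma_A B_w$, $\Gamma_{\leq x_0}B_w+\Gamma_A B_w$, and the quotients $B_w/\Gamma_A B_w$ etc.\ all carry $\nabla$-flags with the expected subquotients, hence are free as right $R$-modules, so that the relevant short exact sequences stay exact after $\otimes_R\mathbb{R}$; this rests on Soergel's hin-und-her lemma \cite[Lemma 6.3]{S4}. That lemma is precisely how the paper closes the argument: it refines the Bruhat order to a total order in which $x_0$ is maximal among the occurring supports, identifies $\Gamma_{\leq x_0}/\Gamma_{<x_0}$ with a consecutive step $\Gamma_{\leq h}/\Gamma_{\leq h-1}$ of the total-order filtration, and applies the degree-$(\ell(w)-2\ell(x_0))$ isomorphism \eqref{mapk} there. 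So your route is viable, but the unproved filtration statements are the actual content of the lemma, not bookkeeping; supply them (via hin-und-her and right-freeness of $\nabla$-flagged modules, or simply switch to the paper's total-order formulation) and the proof is complete.
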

\begin{proof}
Let $\sum_i \bar{ll}_{\undw,e_i}g_i\in \Ker \bar{E_w}$, with $g_i\in \mathbb{R}$. Since $\bar{E_w}$ is homogeneous we can assume the sum to be homogeneous.
Assume that a canonical sequence $e_j$ appears in the sum with $g_j\neq 0$. Then $\undw^{e_j}\neq \undw^{e_i}$ for any $i\neq j$ with $g_i\neq 0$ and, in addition, $x:=\undw^{e_j}$ must be of maximal length among $X:=\{\undw^{e_i} \mid g_i \neq 0\}$. 

We can also choose a refinement of the Bruhat order into a total order of $W$ such that $x$ is maximal inside $X$. We label the elements of $W$ as $w_1<w_2<\ldots$ in order.

For an integer $k\geq 1$ let's denote by $\Gamma_{\leq k}B$ the submodule of elements supported on $\{w_1,\ldots, w_k\}$. Then by Soergel hin-und-her Lemma \cite[Lemma 6.3]{S4} we have for any Soergel bimodule $B$,
$$\Gamma_{\leq w_k}B/\Gamma_{< w_k}B\cong \Gamma_{\leq k}B/\Gamma_{\leq k-1}B.$$

Let $h$ be the index of $x$, i.e. $x=w_h$.
We have $\sum ll_{\undw,e_i}g_i\in \Gamma_{\leq h}BS(\undw)$ and projects to $ll_{\undw,e_j}g_j\in \Gamma_{\leq h}BS(\undw)/\Gamma_{\leq h-1}BS(\undw)$. But the map
\begin{equation}
\Gamma_{\leq h}BS(\undw)/\Gamma_{\leq h-1}BS(\undw)\otimes \mathbb{R}\raw \Gamma_{\leq h}B_w/\Gamma_{\leq  h-1}B_w\otimes \mathbb{R}
\end{equation}
is an isomorphism in degree $v^{\ell(w)-2\ell(x)}$.
Hence $\sum \bar{ll}_{\undw,e_i}g_i$, or equivalently $\bar{ll}_{\undw,e_j}g_j$, is sent to $0$ if and only if $g_j=0$. We obtain a contradiction, whence $\sum \bar{ll}_{\undw,e_i}g_i\in \bar{D_w}$. 
\end{proof}


It follows that $\bar{B_w}=\bar{E_w}(\bar{C_w})\oplus \bar{E_w}(\bar{D_w})$ as $\mathbb{R}$-vector spaces. Moreover, $\bar{E_w}(\bar{D_w})$ is a $R$-submodule of $\bar{B_w}$ and the restriction of $\bar{E_w}$ to $\bar{C_w}$ is injective. We now have all the tools to generalize Corollary \ref{corPD} to the setting of a general Coxeter group.

\begin{cor}
For any $w \in W$ the following are equivalent:
\begin{enumerate}[i)]
 \item $\bar{E_w}(\bar{C_w})\cong\bar{B_w}$.
 \item $\#\{v\in W\mid v\leq w\text{ and }\ell(v)=k\}=\#\{v\in W\mid v\leq w\text{ and }\ell(v)=\ell(w)-k\}$ for any $k\in \bbZ$.
 \item All the Kazhdan-Lusztig polynomials $p_{v,w}$ are trivial.
\end{enumerate}
\end{cor}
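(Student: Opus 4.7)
The plan is to mimic the proof of Corollary \ref{corPD}, with the subspace $\bar{E_w}(\bar{C_w})\cug \bar{B_w}$ playing the role that $H_w[\ell(w)]\cug IH_w$ plays there. All the heavy lifting has already been done in Section A.1: the light leaf basis $\{\bar{ll}_{\undw,e}\}$ yields a direct sum decomposition $\bar{BS}(\undw)=\bar{C_w}\oplus\bar{D_w}$, the restriction $\bar{E_w}|_{\bar{C_w}}$ is injective by Lemma \ref{kerDw}, and $\bar{E_w}(\bar{D_w})$ is a graded $R$-submodule of $\bar{B_w}$ with $\bar{B_w}=\bar{E_w}(\bar{C_w})\oplus \bar{E_w}(\bar{D_w})$. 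From this point on only dimension bookkeeping remains.

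First I would compute the graded dimensions of $\bar{C_w}$: by Lemma \ref{cansub} canonical $01$-sequences biject with $\{v\leq w\}$, and the canonical sequence for $v$ has defect $\ell(w)-\ell(v)$. The formula $\deg(\bar{ll}_{\undw,e})=-\ell(\undw^e)+\defect(e)$ then places the canonical basis element for $v$ in degree $\ell(w)-2\ell(v)$, so
$$\dim \bar{C_w}^{\ell(w)-2k}=\#\{v\leq w\mid \ell(v)=k\}.$$
In particular, condition ii) translates exactly into the symmetry $\dim \bar{C_w}^k=\dim \bar{C_w}^{-k}$ for all $k\in \bbZ$.

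For i) $\Rightarrow$ ii), injectivity gives $\dim\bar{E_w}(\bar{C_w})^k=\dim \bar{C_w}^k$; if i) holds this equals $\dim \bar{B_w}^k$, which is symmetric in $k\leftrightarrow -k$ by the Hard Lefschetz property of $\bar{B_w}$. For the converse, I would set $N:=\bar{E_w}(\bar{D_w})$ and observe $\dim N^k=\dim \bar{B_w}^k-\dim \bar{C_w}^k$; both ii) and Hard Lefschetz then force $\dim N^k=\dim N^{-k}$. Since $N$ is a graded $R$-submodule of $\bar{B_w}$, Corollary \ref{betti} implies $N=0$ or $N\cong \bar{B_w}$. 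The latter would imply $N=\bar{B_w}$ by comparison of total dimensions, hence $\bar{E_w}(\bar{C_w})=0$; but the canonical sequence for $v=e$ produces a non-zero element of $\bar{C_w}^{\ell(w)}$ whose image under $\bar{E_w}$ is non-zero (by injectivity), a contradiction. So $N=0$ and i) follows.

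Finally, i) $\Leftrightarrow$ iii) follows from the same counting as in Corollary \ref{corPD}: $\dim \bar{B_w}=\sum_{v\leq w}p_{v,w}(1)$ while $\dim \bar{C_w}=\#\{v\leq w\}=\sum_{v\leq w}p_{v,w}(0)$, so by injectivity i) reduces to $\sum p_{v,w}(1)=\sum p_{v,w}(0)$, which by positivity of Kazhdan-Lusztig coefficients forces each $p_{v,w}$ to be trivial. The only delicate step is ruling out $N\cong \bar{B_w}$ in the ii) $\Rightarrow$ i) direction; this depends crucially on Lemma \ref{kerDw} to produce the non-vanishing top-degree element in $\bar{E_w}(\bar{C_w})$. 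All the genuine difficulty of the corollary has already been absorbed into the construction of the basis $\{\bar{ll}_{\undw,e}\}$ and the proof that $\ker\bar{E_w}\cug \bar{D_w}$.
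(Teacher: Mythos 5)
Your argument is correct and is essentially the paper's own proof: you translate ii) into the symmetry of the graded dimensions of $\bar{E_w}(\bar{C_w})$, apply Corollary \ref{betti} to the graded $R$-submodule $\bar{E_w}(\bar{D_w})$ to force it to vanish, and settle i) $\Leftrightarrow$ iii) by the same Kazhdan--Lusztig counting as in Corollary \ref{corPD}. Your explicit exclusion of the case $\bar{E_w}(\bar{D_w})\cong\bar{B_w}$ (via the non-vanishing of $\bar{E_w}(\bar{C_w})$, guaranteed by Lemma \ref{kerDw}) merely spells out what the paper compresses into ``since $\bar{B_w}$ is indecomposable.''
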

\begin{proof}
From Lemma \ref{kerDw} we have 
$$\dim (\bar{E_w}(\bar{C_w}))^k=\dim(\bar{C_w})^k=\#\{v\in W \mid v\leq w\text{ and }2\ell(v)=\ell(w)-k\}.$$ Notice that ii) holds if and only if  we have $\dim (\bar{E_w}(\bar{C_w}))^k=\dim (\bar{E_w}(\bar {C}_w))^{-k}$ for any $k\in \mathbb{Z}$, hence if and only if $\dim (\bar{E_w}(\bar{D_w}))^k=\dim (\bar{E_w}(\bar{D_w}))^{-k}$ for any $k\in \mathbb{Z}$. 

If ii) holds, then  we can apply Corollary \ref{betti} to the $R$-submodule $\bar{E_w}(\bar{D_w})\cug \bar{B_w}$. Since $\bar{B_w}$ is indecomposable it follows that $\bar{E_w}(\bar{D_w})=0$. Hence ii) implies i).

 The rest of the proof continues just as in Corollary \ref{corPD}, where $IH_w$ is replaced by $\bar{B_w}$ and $H_w[\ell(w)]$ by $\bar{E_w}(\bar{C_w})$.
\end{proof}

\begin{remark}
One could also define $\tilde{H}_w:=\bar{E_w}(\bar{D_w})^\perp$, where the orthogonal is taken with respect to the intersection form of $\bar{B_w}$, and check that $\tilde{H}_w$ coincides with $H_w$ if $W$ is the Weyl group of some reductive group $G$.
\end{remark}

\Address

\end{document}